\newtheorem{theorem}{Theorem}[section]
\newtheorem{lemma}[theorem]{Lemma}
\newtheorem{proposition}[theorem]{Proposition}
\newtheorem{corollary}[theorem]{Corollary}
\theoremstyle{definition}
\newtheorem{definition}[theorem]{Definition}
\newtheorem{example}[theorem]{Example}
\theoremstyle{remark}
\numberwithin{equation}{section}
\begin{document}
\setcounter{page}{1}

\title[Strongly Unitary Equivalence and Approximately Unitary Equivalence ]%
{Strongly Unitary Equivalence and Approximately Unitary Equivalence of Normal Compact Operators over Topological Spaces}

\author[Zhu Jingming]{Zhu Jingming$^1$}

\address{$^{1}$College of Mathematics Physics and Information Engineering, Jiaxing University, Jiaxing , 314001, P.R.China;}

\email{\textcolor[rgb]{0.00,0.00,0.84}{jingmingzhu\underline{\phantom{a}}math@163.com}}

%\dedicatory{This paper is dedicated to Professor ABCD}

\subjclass[2010]{Primary 47A56, Secondary 55Rxx}

\keywords{Strongly Unitary equivalence, Approximately unitary equivalence, Fiber bundle.}

\date{Received: xxxxxx; Revised: yyyyyy; Accepted: zzzzzz.
}

\begin{abstract}
Let $A$ and $B$ be compact operators over a topological space $X$ and suppose that these operators are normal and have same distinct eigenvalues at each point. By obstruction theory, we establish a necessary and sufficient condition for $A$ and $B$ to be strongly unitarily equivalent. When $X=S^1$, we also give a sufficient condition for $A$ and $B$ to be approximately unitarily equivalent with some assumption on their eigenvalues.
\end{abstract}
\maketitle

\section{Introduction}\
It is well known that every normal matrix with complex entries is diagonalizable. An immediate consequence is that a normal matrix over $\mathbb{C}$ is determined up to unitary equivalence by its eigenvalues, counting multiplicities. R. Kadison \cite{Kadison} generalized this fact and gave an example of a normal element of $M_2(C(S^4))$ that is not diagonalizable. In \cite{Grove}, K. Grove and G. K. Pedersen considered diagonalizability of matrices over compact
Hausdorff spaces more generally. In that paper, they determined which compact Hausdorff spaces $X$ have the property that every normal matrix over $X$ is diagonalizable. Such topological spaces $X$ are rather exotic; for example, no infinite first countable compact Hausdorff space has this property. Given this failure of diagonalizability in general, Greg Friedman and Efton Park considered unitary equivalence problems for normal matrices with multiplicity-free condition, that is all the eigenvalues are distinct. In \cite{Park}, they gave a necessary and sufficient condition for two normal multiplicity-free matrices over a topological space $X$ to be unitarily equivalent. This result, in particular, yields a bound on the numbers of possible unitary equivalence classes in terms of cohomology invariants of $X$.\

Analogous to the matrix case, it is also well-known that every normal compact operator is diagonalizable. And two normal compact operators are unitarily equivalent if and only if they have the same multiplicity function $m_T(x):\mathbb{C}\to \mathbb{C}\bigcup \{\infty\}$ defined by $m_T(x)=dim \ker(T-xI)$ for a compact operator $T$. It would be interesting to investigate the unitary equivalence problem for normal compact operators over topological spaces. More precisely, suppose $X$ is a topological space. Let $C(X)$ be the $\mathbb{C}$-algebra
of complex-valued continuous functions on $X$ and let $\mathbb{B}(C(X))$ be the ring of bounded operators with entries in $C(X)$ and $\mathbb{K}(C(X))$ be the subring of compact operators i.e. the norm limit of matrices with entries in $C(X)$. By a slight abuse of terminology, we will refer to elements
of $\mathbb{K}(C(X))$ as compact operators over $X$. We call $A\in\mathbb{B}(C(X))$ to be normal/selfadjoint/unitary if $A(x)$ is normal/selfadjoint/unitary pointwise.
We can also consider a family of unitaries $\{U(x)\}_{x\in X}$ in $B(H)$ s.t. $\forall v\in H$, $U(x)v$ is continuous for $x\in X$. By a slight abuse of terminology, we call this family $\{U(x)\}_{x\in X}$ as a strongly continuous unitary operator over $X$ and we denote them as $\mathbb{U}(SC(X))$. Then two operators $A, B\in  \mathbb{K}(C(X))$ are unitarily equivalent (strongly unitarily equivalent) if there exists a $U \in \mathbb{U}(C(X))$ ($\mathbb{U}(SC(X))$)  such that $B = U^*AU$ i.e. $B(x) = U^*(x)A(x)U(x)$ for all $x \in X$. And two operators $A, B\in  \mathbb{K}(C(X))$ are approximately unitarily equivalent if $\forall \epsilon>0$, there exists a norm-continuous unitary $U \in I+\mathbb{K}(C(X))$ such that $||B(x) - U^*(x)A(x)U(x)||<\epsilon$ for all $x \in X$. One can then ask the following
question:\

$\mathbf{Question.}$ Given a topological space $X$, for two normal compact operators $A$ and $B$ over $X$, when they are strongly unitarily equivalent and when they are approximately unitarily equivalent?\

Our approach to these questions utilizes the obstruction theory. For strongly unitary equivalence, following Greg Friedman and Efton Park's paper \cite{Park}, we only need to modify their argument to adjust it to the infinite dimensional case with strong operator topology. But for approximately unitary equivalence,
because now the problem is related to dimension, we can not follow Greg Friedman and Efton Park's approach directly. And since the general problem of approximately unitary equivalence of compact operators over topological space $X$(even for cell spaces) is too hard, we restrict ourselves to the case when $X$ is one-dimensional. The case of a segment is simple, so the general one-dimensional case reduces to that for a circle. For $X=S^1$, we also construct a fiber bundle with finite dimensional fiber that encodes approximately unitary equivalence information for compact operators over $S^1$. Restricting on the compact operators with the condition that every eigenvalue at each point can be extended to be a continuous function on $[0,1]$, we then construct two finite rank normal compact operators $A_n$ and $B_n$ close to $A$ and $B$ respectively and we associate to $A_n$ and $B_n$ a continuous map from $S^1$ to the base space of the fiber bundle. We show that if this map lifts to the total space, then $A$ and $B$ are approximately unitarily equivalent. \

This article is organized as follows: In section 2, we give a sufficient and necessary condition for two normal multiplicity-free elements $A$ and $B$ in $\mathbb{K}(C(X))$ to be strongly unitarily equivalent. In section 3, we restrict our attention the normal multiplicity-free compact operators $A$ and $B$ over $S^1$ with condition (1) and we give a sufficient condition for $A$ and $B$ to be approximately unitarily equivalent. Moreover, for two normal multiplicity-free elements $A$ and $B$ in $\mathbb{K}(C(S^1))$ with $Eig(A(x))=Eig(B(x))\subset \mathbb{C}\setminus\{0\}$, we also prove that if $A$ satisfied condition (1), so does $B$ and hence $A$ and $B$ are approximately unitarily equivalent.\

\section{Strongly unitary equivalence of  normal compact operator over topological spaces}\
\subsection{A useful fiber bundle}
We construct a fiber bundle $p: E\rightarrow C$, starting with the base space. Let $\mathcal{P},\mathcal{Q}$ be the sets of 1-dimensional pairwise orthogonal projections in $B(H)$ s.t.
\[s-\lim_{F\subset_{finite}\mathcal{P}}\sum_{p\in F}p=I \text{ and }s-\lim_{F\subset_{finite}\mathcal{Q}}\sum_{q\in F}q=I\]
where $F$ is a finite subset in $\mathcal{P}$ and the limit above is the strong limit in $B(H)$. Set
\[C=\{(\mathcal{P},\mathcal{Q},\sigma):\sigma \text{ is a bijection from } \mathcal{P} \text{ to }\mathcal{Q}\}\]

We would construct a metric on $C$. Let $||\cdot||$ be the (operator) norm in $B(H)$. For $(\mathcal{P},\mathcal{Q},\sigma),(\tilde{\mathcal{P}},\tilde{\mathcal{Q}},\tilde{\sigma})\in C$, define
\begin{multline*}
d((\mathcal{P},\mathcal{Q},\sigma),(\tilde{\mathcal{P}},\tilde{\mathcal{Q}},\tilde{\sigma}))= \\
\inf\{\sup\{||p-\tau(p)||,||\sigma(p)-\tilde{\sigma}\tau(p)||:p\in\mathcal{P}\}:\tau \text{ is a bijection from } \mathcal{P} \text{ to }\widetilde{\mathcal{P}}\}
\end{multline*}

For the definition, since $||p-\tau(p)||\leq ||p||+||\tau(p)||\leq 2$ and also $||\sigma(p)-\tilde{\sigma}\tau(p)||\leq 2$, $\sup\{||p-\tau(p)||,||\sigma(p)-\tilde{\sigma}\tau(p)||:p\in\mathcal{P}\}\leq 2$. So $$d((\mathcal{P},\mathcal{Q},\sigma),(\tilde{\mathcal{P}},\tilde{\mathcal{Q}},\tilde{\sigma}))\subset [0,2]$$

\begin{lemma}
With the definition above, $d$ is a metric on $C$.
\end{lemma}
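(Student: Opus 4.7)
The strategy is to verify the four axioms of a metric in turn. Non-negativity and the bound $d \leq 2$ are immediate from the definition, as the author already observes.

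Symmetry follows by reparameterizing the infimum via inverses: if $\tau : \mathcal{P} \to \tilde{\mathcal{P}}$ is a bijection, then so is $\tau^{-1} : \tilde{\mathcal{P}} \to \mathcal{P}$, and setting $\tilde{p} = \tau(p)$ gives $||\tilde{p} - \tau^{-1}(\tilde{p})|| = ||p - \tau(p)||$ along with the analogous identity for the $\sigma$-terms, so the two infima are taken over the same collection of values. For the triangle inequality on triples labelled $1, 2, 3$, given $\epsilon > 0$ I would pick bijections $\tau : \mathcal{P}_1 \to \mathcal{P}_2$ and $\tau' : \mathcal{P}_2 \to \mathcal{P}_3$ whose sups are within $\epsilon$ of $d_{12}$ and $d_{23}$ respectively, use $\tau' \circ \tau$ as a candidate bijection $\mathcal{P}_1 \to \mathcal{P}_3$, and apply the operator-norm triangle inequality inside each of the two sup terms; letting $\epsilon \to 0$ yields $d_{13} \leq d_{12} + d_{23}$.

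The main obstacle is the identity-of-indiscernibles direction, $d(\cdot,\cdot) = 0 \Rightarrow$ the triples coincide, complicated by the fact that the infimum need not be achieved. The key input is the elementary lemma that any two distinct rank-one mutually orthogonal projections $e, f$ in $B(H)$ satisfy $||e - f|| = 1$ (since $(e-f)^2 = e + f$ by orthogonality, so $||e - f||^2 = ||e + f|| = 1$). Given $d = 0$, I would pick bijections $\tau_n : \mathcal{P} \to \tilde{\mathcal{P}}$ with sup $< 1/n$. For a fixed $p \in \mathcal{P}$ and $n$ large enough that $1/n < 1/2$, the element $\tau_n(p)$ is the unique member of $\tilde{\mathcal{P}}$ within operator-norm distance $1/2$ of $p$: any second candidate would be orthogonal to $\tau_n(p)$ and yet within norm $< 1$ of it, contradicting the lemma. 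Hence $\tau_n(p)$ stabilizes in $n$ to a single $\tilde{p} \in \tilde{\mathcal{P}}$ with $||p - \tilde{p}|| < 1/n$ for all large $n$, forcing $p = \tilde{p}$ and so $\mathcal{P} \subseteq \tilde{\mathcal{P}}$; the symmetric argument gives $\mathcal{P} = \tilde{\mathcal{P}}$.

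Finally, once $\tau_n(p) = p$ holds for each fixed $p$ and all sufficiently large $n$, the second sup in the definition reduces to $||\sigma(p) - \tilde{\sigma}(p)|| < 1/n$, so $\sigma(p) = \tilde{\sigma}(p)$ for every $p \in \mathcal{P}$. This gives $\sigma = \tilde{\sigma}$ and hence $\mathcal{Q} = \sigma(\mathcal{P}) = \tilde{\sigma}(\mathcal{P}) = \tilde{\mathcal{Q}}$, so the two triples agree. The only delicate step is the uniqueness argument in the previous paragraph; the other three axioms are essentially formal.
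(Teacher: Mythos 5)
Your proof is correct and follows essentially the same route as the paper: the three formal axioms are handled by reparameterizing with $\tau^{-1}$ and composing candidate bijections, while the identity-of-indiscernibles step exploits that distinct pairwise-orthogonal rank-one projections sit at norm distance $\ge 1$, forcing the approximating bijections $\tau_n$ to stabilize at the identity. The only cosmetic difference is that you prove the auxiliary fact $\|e-f\|=1$ outright (via $(e-f)^2 = e+f$), whereas the paper just invokes the weaker bound $\ge 1$; either suffices.
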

\begin{proof}
Clearly $d((\mathcal{P},\mathcal{Q},\sigma),(\tilde{\mathcal{P}},\tilde{\mathcal{Q}},\tilde{\sigma}))$ is always nonnegative. \

If $d((\mathcal{P},\mathcal{Q},\sigma),(\tilde{\mathcal{P}},\tilde{\mathcal{Q}},\tilde{\sigma}))=0$, then there is a sequence of bijections $\tau_n$ from $\mathcal{P}$ to $\tilde{\mathcal{P}}$ s.t.

\begin{equation}
||p-\tau_n(p)||<\frac{1}{n},~\forall n\in\mathbb{N},~ \forall p\in\mathcal{P}
\end{equation}
\begin{equation}
||\sigma(p)-\tilde{\sigma}\tau_n(p)||<\frac{1}{n},~\forall n\in\mathbb{N},~ \forall p\in\mathcal{P}
\end{equation}
By (2.1), for $n,m>2$, we have $||\tau_n(p)-p||<\frac{1}{n}$, $||\tau_m(p)-p||<\frac{1}{m}$ and hence $||\tau_n(p)-\tau_m(p)||<\frac{m+n}{mn}<1$, $\forall p\in\mathcal{P}$. If $\tau_n(p)\neq \tau_m(p)$ in $\tilde{\mathcal{P}}$, then $\tau_n(p)$ and $\tau_m(p)$ are orthogonal and hence $||\tau_m(p)-\tau_n(p)||\geq 1$, which is a contradiction. So $\tau_3(p)=\tau_n(p)$, $\forall n>2$, $\forall p\in\mathcal{P}$ and we have $||p-\tau_3(p)||=||p-\tau_n(p)||<\frac{1}{n}$, which induces $p=\tau_3(p)$, $\forall p\in \mathcal{P}$. Thus $\mathcal{P}=\tilde{\mathcal{P}}$ and $\tau_n=\tau_3=id$, $\forall n>2$.
By (2.2), $\sigma=\tilde{\sigma}$ and hence $\mathcal{Q}=\sigma(\mathcal{P})=\sigma(\tilde{\mathcal{P}})=\tilde{\sigma}(\tilde{\mathcal{P}})=\tilde{\mathcal{Q}}$.\

Next let $(\mathcal{P},\mathcal{Q},\sigma),(\tilde{\mathcal{P}},\tilde{\mathcal{Q}},\tilde{\sigma})\in C$. For any $n$, there exists a bijection $\tau_n:\mathcal{P}\to \tilde{\mathcal{P}}$ s.t.
\begin{align*}
% \nonumber to remove numbering (before each equation)
&d((\mathcal{P},\mathcal{Q},\sigma),(\tilde{\mathcal{P}},\tilde{\mathcal{Q}},\tilde{\sigma}))\\
&\geq \sup\{||p-\tau_n(p)||,||\sigma(p)-\tilde{\sigma}\tau_n(p)||:p\in\mathcal{P}\}-\frac{1}{n}\\
&\geq\sup\{||\tau_n^{-1}(\tilde{p})-\tilde{p}||,||\sigma\tau_n^{-1}(\tilde{p})-\tilde{\sigma}\tilde{p}||:\tilde{p}\in\tilde{\mathcal{P}}\}-\frac{1}{n}\\
&\geq d((\tilde{\mathcal{P}},\tilde{\mathcal{Q}},\tilde{\sigma}),(\mathcal{P},\mathcal{Q},\sigma))-\frac{1}{n}
\end{align*}
So $d((\mathcal{P},\mathcal{Q},\sigma),(\tilde{\mathcal{P}},\tilde{\mathcal{Q}},\tilde{\sigma}))\geq d((\tilde{\mathcal{P}},\tilde{\mathcal{Q}},\tilde{\sigma}),(\mathcal{P},\mathcal{Q},\sigma))$ and the inverse in the same.\

Next let $(\mathcal{P},\mathcal{Q},\sigma),(\tilde{\mathcal{P}},\tilde{\mathcal{Q}},\tilde{\sigma}),(\hat{\mathcal{P}},\hat{\mathcal{Q}},\hat{\sigma})\in C$. There exist bijections $\tau_n:\mathcal{P}\to \tilde{\mathcal{P}}$ and $\nu_n:\tilde{\mathcal{P}}\to\hat{\mathcal{P}}$ s.t.
\begin{align*}
&d((\mathcal{P},\mathcal{Q},\sigma),(\tilde{\mathcal{P}},\tilde{\mathcal{Q}},\tilde{\sigma}))+d((\tilde{\mathcal{P}},\tilde{\mathcal{Q}},\tilde{\sigma}),(\hat{\mathcal{P}},\hat{\mathcal{Q}},\hat{\sigma}))\\
&\geq \sup\{||p-\tau_n(p)||,||\sigma(p)-\tilde{\sigma}\tau_n(p)||:p\in\mathcal{P}\}+\\
&\sup\{||\tilde{p}-\nu_n(\tilde{p})||,||\tilde{\sigma}(\tilde{p})-\hat{\sigma}\nu_n(\tilde{p})||:\tilde{p}\in \tilde{\mathcal{P}}\}-\frac{2}{n}\\
&\geq\sup\{||p-\tau_n(p)||+||\tau_n(p)-\nu_n\tau_n(p)||,||\sigma(p)-\tilde{\sigma}\tau_n(p)||\\
&+||\tilde{\sigma}\tau_n(p)-\hat{\sigma}\nu_n\tau_n(p)||:p\in\mathcal{P}\}-\frac{2}{n}\\
&\geq\sup\{||p-\nu_n\tau_n(p)||,||\sigma(p)-\hat{\sigma}\nu_n\tau_n(p)||:p\in\mathcal{P}\}-\frac{2}{n}\\
&\geq d((\mathcal{P},\mathcal{Q},\sigma),(\hat{\mathcal{P}},\hat{\mathcal{Q}},\hat{\sigma}))-\frac{2}{n}, \forall n\in \mathbb{N}
\end{align*}

So triangle inequality holds and $d$ is a metric on $C$.
\end{proof}

Endow $C$ with the topology induced by the metric $d$.

\begin{lemma}
Let $(\mathcal{P},\mathcal{Q},\sigma),(\tilde{\mathcal{P}},\tilde{\mathcal{Q}},\tilde{\sigma})\in C$. Suppose there exists a bijection $\tilde{\tau}:\mathcal{P}\to \tilde{\mathcal{P}}$ with
$$\sup\{||p-\tilde{\tau}(p)||,||\sigma(p)-\tilde{\sigma}\tilde{\tau}(p)||:p\in\mathcal{P}\}<\frac{1}{2}$$

Then

$$d((\hat{\mathcal{P}},\hat{\mathcal{Q}},\hat{\sigma}),(\tilde{\mathcal{P}},\tilde{\mathcal{Q}},\tilde{\sigma}))=
\sup\{||p-\tilde{\tau}(p)||,||\sigma(p)-\tilde{\sigma}\tilde{\tau}(p)||:p\in\mathcal{P}\}$$
\end{lemma}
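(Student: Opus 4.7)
The statement on the left-hand side appears to contain a typo: $\hat{\mathcal{P}},\hat{\mathcal{Q}},\hat{\sigma}$ have not been introduced, so I read the assertion as
\[d((\mathcal{P},\mathcal{Q},\sigma),(\tilde{\mathcal{P}},\tilde{\mathcal{Q}},\tilde{\sigma})) = s(\tilde{\tau}),\]
where, for any bijection $\tau : \mathcal{P} \to \tilde{\mathcal{P}}$, I abbreviate $s(\tau) := \sup\{\|p - \tau(p)\|, \|\sigma(p) - \tilde{\sigma}\tau(p)\| : p \in \mathcal{P}\}$. Under this reading, the content is that the hypothesis $s(\tilde{\tau}) < \tfrac{1}{2}$ forces the specific bijection $\tilde{\tau}$ to realize the infimum defining $d$.

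The key ingredient is a rigidity property already exploited in Lemma~2.1: if $q_1, q_2 \in \tilde{\mathcal{P}}$ are distinct, they are mutually orthogonal rank-one projections, so $q_1 - q_2$ is a selfadjoint operator with spectrum $\{-1,0,1\}$, and hence $\|q_1 - q_2\| = 1$. Using this, I would show that any competing bijection $\tau : \mathcal{P} \to \tilde{\mathcal{P}}$ with $s(\tau) < \tfrac{1}{2}$ must actually coincide with $\tilde{\tau}$. Indeed, for each fixed $p \in \mathcal{P}$, the triangle inequality gives
\[\|\tau(p) - \tilde{\tau}(p)\| \leq \|\tau(p) - p\| + \|p - \tilde{\tau}(p)\| < \tfrac{1}{2} + \tfrac{1}{2} = 1,\]
and the rigidity property applied inside $\tilde{\mathcal{P}}$ then forces $\tau(p) = \tilde{\tau}(p)$.

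The inequality $d \leq s(\tilde{\tau})$ is immediate since $\tilde{\tau}$ itself is a competitor in the infimum. For the reverse inequality, I consider an arbitrary bijection $\tau : \mathcal{P} \to \tilde{\mathcal{P}}$: either $s(\tau) \geq \tfrac{1}{2} > s(\tilde{\tau})$, or $s(\tau) < \tfrac{1}{2}$, in which case the previous paragraph yields $\tau = \tilde{\tau}$ and hence $s(\tau) = s(\tilde{\tau})$. In either case $s(\tau) \geq s(\tilde{\tau})$, so taking the infimum over $\tau$ gives $d \geq s(\tilde{\tau})$, which combined with the opposite inequality finishes the proof. The only nontrivial ingredient is the orthogonality-forces-unit-distance rigidity, which is lightweight; no serious obstacle is anticipated.
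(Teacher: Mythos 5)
Your reading of the typo is the intended one, and your proof is correct: it is essentially the paper's argument in contrapositive form, with the same key ingredient that distinct rank-one projections in $\tilde{\mathcal{P}}$ are orthogonal and hence at norm-distance at least $1$, so any competing bijection $\tau$ with $s(\tau)<\tfrac{1}{2}$ is forced by the triangle inequality to coincide with $\tilde{\tau}$, while any $\tau\neq\tilde{\tau}$ necessarily has $s(\tau)\geq\tfrac{1}{2}>s(\tilde{\tau})$. The paper instead shows directly that any other choice $\tilde p\neq\tilde\tau(p)$ lies outside the radius-$\tfrac12$ ball around $p$, but the two arguments are the same.
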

\begin{proof}
For a bijection $\tilde{\tau}$ satisfying the condition, $||p-\tilde{\tau}(p)||<\frac{1}{2}$, $\forall p\in \mathcal{P}$. For any $\tilde{p}\in\tilde{\mathcal{P}}$ with $\tilde{p}\neq \tilde{\tau}(p)$, we have $ran(\tilde{p})\subset (ran\tilde{\tau}(p))^{\perp}$. So $\forall v\in ran(\tilde{p})$, $(\tilde{p}-\tilde{\tau}(p))v=\tilde{p}v-\tilde{\tau}(p)v=\tilde{p}v=v$, which induces $||\tilde{p}-\tilde{\tau}(p)||\geq 1$. Since $||p-\tilde{\tau}(p)||<\frac{1}{2}$, we have $||p-\tilde{p}||>\frac{1}{2}$. Hence any other choice, except for $\tilde{\tau}(p)$, will not be in the ball of radius of $\frac{1}{2}$ with center $p$. Similar argument works for $||\sigma(p)-\tilde{\sigma}\tilde{\tau}(p)||$ and hence $\tilde{\tau}:\mathcal{P}\to \tilde{\mathcal{P}}$ realizes the $\inf\{\sup\{||p-\tau(p)||,||\sigma(p)-\tilde{\sigma}\tau(p)||:p\in\mathcal{P}\}:\tau \text{ is a bijection from } \mathcal{P} \text{ to } \tilde{\mathcal{P}}\}$ i.e.
$$d((\mathcal{P},\mathcal{Q},\sigma),(\tilde{\mathcal{P}},\tilde{\mathcal{Q}},\tilde{\sigma}))=
\sup\{||p-\tilde{\tau}(p)||,||\sigma(p)-\tilde{\sigma}\tilde{\tau}(p)||:p\in\mathcal{P}\}$$
\end{proof}

\begin{lemma}
If $d((\mathcal{P},\mathcal{Q},\sigma),(\tilde{\mathcal{P}},\tilde{\mathcal{Q}},\tilde{\sigma}))<\frac{1}{4},~d((\mathcal{P},\mathcal{Q},\sigma),(\hat{\mathcal{P}},\hat{\mathcal{Q}},\hat{\sigma}))<\frac{1}{4}$, let $\tilde{\tau},\hat{\tau}$ be the bijections which realize the infimums of the distances respectively, then $\forall \epsilon>0$, $\forall p\in\mathcal{P}$
\begin{eqnarray}d((\hat{\mathcal{P}},\hat{\mathcal{Q}},\hat{\sigma}),(\tilde{\mathcal{P}},\tilde{\mathcal{Q}},\tilde{\sigma}))<\epsilon \Rightarrow
\begin{cases}
||\tilde{\tau}(p)-\hat{\tau}(p)||<\epsilon\cr  ||\tilde{\sigma}\tilde{\tau}(p)-\hat{\sigma}\hat{\tau}(p)||<\epsilon\end{cases}
\end{eqnarray}
\end{lemma}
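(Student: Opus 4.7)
The plan is to use the previous lemma to identify the bijection $\hat{\mathcal{P}}\to\tilde{\mathcal{P}}$ that realizes $d((\hat{\mathcal{P}},\hat{\mathcal{Q}},\hat{\sigma}),(\tilde{\mathcal{P}},\tilde{\mathcal{Q}},\tilde{\sigma}))$; the natural candidate is $\rho:=\tilde{\tau}\hat{\tau}^{-1}$. Once $\rho$ is shown to satisfy the hypothesis of that lemma, the distance in question will literally equal $\sup_{\hat{p}\in\hat{\mathcal{P}}}\max\{||\hat{p}-\rho(\hat{p})||,||\hat{\sigma}(\hat{p})-\tilde{\sigma}\rho(\hat{p})||\}$, and a bound on that distance will transfer directly to the pointwise bound the statement asks for.

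First I would apply the previous lemma to each of the two given pairs: since $\tilde{\tau}$ and $\hat{\tau}$ realize infima strictly below $\frac{1}{4}$, it yields
\[
||p-\tilde{\tau}(p)||,\ ||\sigma(p)-\tilde{\sigma}\tilde{\tau}(p)||<\frac{1}{4},\qquad ||p-\hat{\tau}(p)||,\ ||\sigma(p)-\hat{\sigma}\hat{\tau}(p)||<\frac{1}{4}
\]
uniformly in $p\in\mathcal{P}$. A single triangle inequality then gives
\[
\sup_{p\in\mathcal{P}}||\hat{\tau}(p)-\tilde{\tau}(p)||<\frac{1}{2},\qquad \sup_{p\in\mathcal{P}}||\hat{\sigma}\hat{\tau}(p)-\tilde{\sigma}\tilde{\tau}(p)||<\frac{1}{2},
\]
which, after relabelling $\hat{p}=\hat{\tau}(p)$, is precisely the ``sup $<\frac{1}{2}$'' hypothesis required to apply the previous lemma to $\rho=\tilde{\tau}\hat{\tau}^{-1}:\hat{\mathcal{P}}\to\tilde{\mathcal{P}}$ viewed as a bijection between $(\hat{\mathcal{P}},\hat{\mathcal{Q}},\hat{\sigma})$ and $(\tilde{\mathcal{P}},\tilde{\mathcal{Q}},\tilde{\sigma})$.

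Invoking that lemma for $\rho$ then gives
\[
d((\hat{\mathcal{P}},\hat{\mathcal{Q}},\hat{\sigma}),(\tilde{\mathcal{P}},\tilde{\mathcal{Q}},\tilde{\sigma}))=\sup_{p\in\mathcal{P}}\max\{||\hat{\tau}(p)-\tilde{\tau}(p)||,||\hat{\sigma}\hat{\tau}(p)-\tilde{\sigma}\tilde{\tau}(p)||\},
\]
so if the left-hand side is less than $\epsilon$, each quantity on the right is less than $\epsilon$ for every $p\in\mathcal{P}$, which is exactly the desired conclusion. The regime $\epsilon\ge\frac{1}{2}$ is already contained in the triangle-inequality step above and requires no separate treatment.

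The only real obstacle is the verification of the ``sup $<\frac{1}{2}$'' hypothesis for $\rho$; this is precisely why the two input distances are assumed strictly below $\frac{1}{4}$, since $\frac{1}{4}+\frac{1}{4}=\frac{1}{2}$ is exactly the boundary between the unique closest-pair regime of the previous lemma and the orthogonality jump of size $1$ exploited in the proofs of both that lemma and the metric lemma. Every other step is a routine rearrangement.
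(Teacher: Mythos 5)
Your proposal is correct and follows essentially the same route as the paper's proof: apply Lemma~2.2 to each of the two given pairs to obtain the $\frac{1}{4}$-bounds, use the triangle inequality to show the composite bijection between $\hat{\mathcal{P}}$ and $\tilde{\mathcal{P}}$ has sup below $\frac{1}{2}$, and then invoke Lemma~2.2 once more to identify the distance with that sup, from which the pointwise bounds follow immediately. The only cosmetic difference is that you compose in the direction $\tilde{\tau}\hat{\tau}^{-1}:\hat{\mathcal{P}}\to\tilde{\mathcal{P}}$ while the paper uses $\hat{\tau}\tilde{\tau}^{-1}:\tilde{\mathcal{P}}\to\hat{\mathcal{P}}$, which is equivalent by the symmetry of $d$.
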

\begin{proof}
By Lemma 2.2, we have
$$\sup\{||p-\tilde{\tau}(p)||,||\sigma(p)-\tilde{\sigma}\tilde{\tau}(p)||:p\in\mathcal{P}\}<\frac{1}{4}$$
and
$$\sup\{||p-\hat{\tau}(p)||,||\sigma(p)-\hat{\sigma}\hat{\tau}(p)||:p\in\mathcal{P}\}<\frac{1}{4}$$
So
\[\sup\{||\tilde{p}-\hat{\tau}\tilde{\tau}^{-1}(\tilde{p})||:\tilde{p}\in\tilde{\mathcal{P}}\}\leq \sup\{||\tilde{p}-\tilde{\tau}^{-1}(\tilde{p})||+||\tilde{\tau}^{-1}(\tilde{p})-\hat{\tau}\tilde{\tau}^{-1}(\tilde{p})||:\tilde{p}\in\tilde{\mathcal{P}}\}\]
\[\leq\sup\{||\tilde{\tau}(p)-p||+||p-\hat{\tau}(p)||:p\in\mathcal{P}\}<\frac{1}{2}\]
Similarly, we have $\sup\{||\tilde{\sigma}(\tilde{p})-\hat{\sigma}\hat{\tau}\tilde{\tau}^{-1}(\tilde{p})||:\tilde{p}\in\tilde{\mathcal{P}}\}<\frac{1}{2}$. By Lemma 2.2, $\hat{\tau}\tilde{\tau}^{-1}$ realizes the inf in the definition of $d((\tilde{\mathcal{P}},\tilde{\mathcal{Q}},\tilde{\sigma}),(\hat{\mathcal{P}},\hat{\mathcal{Q}},\hat{\sigma}))$. So
\begin{eqnarray}
\begin{cases}
\sup\{||\tilde{p}-\hat{\tau}\tilde{\tau}^{-1}(\tilde{p})||:\tilde{p}\in\tilde{\mathcal{P}}\}<\epsilon\Rightarrow||\tilde{\tau}(p)-\hat{\tau}(p)||<\epsilon,~\forall p\in\mathcal{P} \cr  \sup\{||\tilde{\sigma}(\tilde{p})-\hat{\sigma}\hat{\tau}\tilde{\tau}^{-1}(\tilde{p})||:\tilde{p}\in\tilde{\mathcal{P}}\}<\epsilon\Rightarrow||\tilde{\sigma}\tilde{\tau}(p)-\hat{\sigma}\hat{\tau}(p)||<\epsilon,~\forall p\in\mathcal{P}\end{cases}
\end{eqnarray}
\end{proof}

Endow $\mathbb{U}(H)$ with strong operator topology in $B(H)$. Define
$$E=\{((\mathcal{P},\mathcal{Q},\sigma),U)\in C\times \mathbb{U}(H):UpU^*=\sigma(p),~\forall p\in \mathcal{P}\}$$
$$\pi:E\to C\text{ by }\pi(((\mathcal{P},\mathcal{Q},\sigma),U))=(\mathcal{P},\mathcal{Q},\sigma)$$

Note that $((\mathcal{P},\mathcal{Q},\sigma),U)\in E\Leftrightarrow $ $U$ restricts to an isometric vector space isomorphism from $ran(p)$ to $ran(\sigma(p))$, $\forall p\in \mathcal{P}$.\

If $((\mathcal{P},\mathcal{Q},\sigma),U),((\mathcal{P},\mathcal{Q},\sigma),\tilde{U})\in \pi^{-1}((\mathcal{P},\mathcal{Q},\sigma))$, then $\forall p\in \mathcal{P}$, the restrictions of $U$ and $\tilde{U}$ from 1-dim vector space $ran(p)$ to $ran(\sigma(p))$ differs only by an isometry of $\mathbb{C}$. So we have

\begin{proposition}
If $((\mathcal{P},\mathcal{Q},\sigma),U)\in E$, then $((\mathcal{P},\mathcal{Q},\sigma),\tilde{U})\in \pi^{-1}((\mathcal{P},\mathcal{Q},\sigma),U))$ $\Leftrightarrow$
$$\tilde{U}=s-\lim_{F\subset_{finite}\mathcal{P}}\sum_{p\in F}\tilde{z_p}\sigma(p)Up$$
for some set of $\{\tilde{z_p}\}_{p\in \mathcal{P}}$ of complex number of module 1, in which  the limit above is the strong limit in $B(H)$.\

Furthermore, each such $\tilde{U}$ can be uniquely written in this form.
\end{proposition}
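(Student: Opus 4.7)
The plan is to treat the two implications and the uniqueness separately, all of them hinging on the same geometric observation: any element of $\pi^{-1}((\mathcal{P},\mathcal{Q},\sigma))$ must restrict, for each $p\in\mathcal{P}$, to an isometric isomorphism from the one-dimensional space $ran(p)$ onto the one-dimensional space $ran(\sigma(p))$, and any two such restrictions differ by a unimodular scalar. For the forward direction, suppose $\tilde U$ is in the same fiber as $U$ and fix a unit vector $v_p\in ran(p)$ for each $p\in\mathcal{P}$. Since $Uv_p$ and $\tilde Uv_p$ are both unit vectors in $ran(\sigma(p))$, there is a unique $\tilde z_p\in\mathbb{C}$ with $|\tilde z_p|=1$ and $\tilde Uv_p=\tilde z_p\,Uv_p$, and this scalar does not depend on the choice of $v_p$. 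Since $I=s-\lim_F\sum_{p\in F}p$ and $\tilde U$ is bounded, for any $v\in H$ we obtain
\[\tilde Uv=\lim_F\sum_{p\in F}\tilde U(pv)=\lim_F\sum_{p\in F}\tilde z_p\,U(pv)=\lim_F\sum_{p\in F}\tilde z_p\,\sigma(p)U(pv),\]
where the last equality uses $U(pv)=\sigma(p)Uv\in ran(\sigma(p))$.

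For the converse, given unimodular scalars $\{\tilde z_p\}_{p\in\mathcal{P}}$, I would first verify strong convergence of the net $S_F(v):=\sum_{p\in F}\tilde z_p\,\sigma(p)Upv$. The operators $\{\sigma(p)Up\}_{p\in\mathcal{P}}$ have pairwise orthogonal ranges (since $\sigma(\mathcal{P})=\mathcal{Q}$ is a set of pairwise orthogonal projections) and satisfy $\|\sigma(p)Upv\|=\|pv\|$, so $\|S_F(v)\|^2=\sum_{p\in F}\|pv\|^2$; hence the net is Cauchy in $H$ and converges to a vector of norm $\|v\|$. This defines a linear isometry $\tilde U\in B(H)$. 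Surjectivity is automatic because $ran(\tilde U)\supseteq ran(\sigma(p))$ for every $p$ and the $ran(\sigma(p))$ span a dense subspace of $H$; thus $\tilde U\in\mathbb{U}(H)$. The intertwining relation $\tilde Up\tilde U^{*}=\sigma(p)$ then follows directly from the defining formula, since $\tilde U$ sends $ran(p')$ into $ran(\sigma(p'))$ for every $p'$.

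Uniqueness is immediate: if $\tilde U=s-\lim\sum\tilde z_p\,\sigma(p)Up=s-\lim\sum w_p\,\sigma(p)Up$ with both families unimodular, evaluating on a unit vector $v\in ran(p)$ yields $\tilde z_p\,Uv=w_p\,Uv$, and hence $\tilde z_p=w_p$. The main obstacle I anticipate is the strong convergence of the defining net in the converse direction; this is not automatic from the abstract data $\{\tilde z_p\}$, but falls out cleanly from the orthogonality of the ranges $\{ran(\sigma(p))\}_{p\in\mathcal{P}}$ combined with the standing hypothesis that the partial sums $\sum_{p\in F}p$ converge strongly to $I$. Everything else reduces to bookkeeping on the one-dimensional fibers $ran(p)$.
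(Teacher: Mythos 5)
Your proof is correct and follows essentially the same route as the paper's: both hinge on the observation that any element of the fiber restricts, on each one-dimensional $ran(p)$, to $U$ composed with a unimodular scalar, and both recover $\tilde U$ by summing these restrictions against the strongly convergent partition $\sum_{p}p=I$. The only minor stylistic difference is in the converse direction, where you verify strong convergence of the defining net and then show $\tilde U$ is a surjective isometry, while the paper takes the limit as given and directly computes $\tilde U\tilde U^{*}=\tilde U^{*}\tilde U=I$; both arguments are sound and amount to the same bookkeeping.
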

\begin{proof}
$\Leftarrow$: Since $\sigma(p)=UpU^*$, $\forall p\in\mathcal{P}$ and $\{p\}_{p\in\mathcal{P}}$ are pairwise orthogonal projections, $\forall v\in H$,
\[\tilde{U}\tilde{U}^*v=(s-\lim_{F\subset_{finite}\mathcal{P}}\sum_{p\in F}\tilde{z_p}\sigma(p)Up)(s-\lim_{F'\subset_{finite}\mathcal{P}}\sum_{p\in F'}\bar{\tilde{z}}_ppU^*\sigma(p))v\]
\[=(s-\lim_{F\subset_{finite}\mathcal{P}}\sum_{p\in F}\tilde{z}_p\bar{\tilde{z}}_p\sigma(p)UppU^*\sigma(p))v=s-\lim_{F\subset_{finite}\mathcal{P}}\sum_{p\in F}\sigma(p)v=v\]
So $\tilde{U}\tilde{U}^*=I$. A similar computation shows that $\tilde{U}^*\tilde{U}=I$ and hence $\tilde{U}$ is a unitary.\
\begin{align*}
&\tilde{U}p'=s-\lim_{F\subset_{finite}\mathcal{P}}\sum_{p\in F}\tilde{z_p}\sigma(p)Upp'=\tilde{z}_{p'}\sigma(p')Up'\\
&=\sigma(p')s-\lim_{F\subset_{finite}\mathcal{P}}\sum_{p\in F}\tilde{z}_p\sigma(p)Up=\sigma(p')\tilde{U},~~\forall p'\in\mathcal{P},~
\end{align*}
so $\tilde{U}p'=\sigma(p')\tilde{U}$ and hence $\tilde{U}p'\tilde{U}^*=\sigma(p')$, $\forall p'\in \mathcal{P}$.\

$\Rightarrow$: Suppose $((\mathcal{P},\mathcal{Q},\sigma),\hat{U})\in E$. $U$ and $\hat{U}$ both restrict to isometric vector space isomorphisms from $ran(p)$ to $ran(\sigma(p))$, $\forall p\in\mathcal{P}$. These isomorphisms are in the form of $\sigma(p)Up$ and $\sigma(p)\hat{U}p$ respectively.
The spaces $ran(p)$ and $ran(\sigma(p))$ are 1 dimensional for any $p\in\mathcal{P}$, so we have
\[\hat{z}_p\sigma(p)Up=\sigma(p)\hat{U}p, \text{ for some }\hat{z}_p\in S^1,~\forall p\in\mathcal{P}\]
Since $U$ could be written as $s-\lim_{F\subset_{finite}\mathcal{P}}\sum_{p\in F}\sigma(p)Up$, so
\[\hat{U}=s-\lim_{F\subset_{finite}\mathcal{P}}\sum_{p\in F}\hat{z}_p\sigma(p)Up\]
\end{proof}

\begin{lemma}\cite{Park}
Let $p$ and $\tilde{p}$ be projections in $B(H)$ and suppose $||p-\tilde{p}||<1$. Then $I+\tilde{p}-p$ maps $ran(p)$ isomorphically onto $ran(\tilde{p})$.
\end{lemma}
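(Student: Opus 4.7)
The plan is to reduce the claim to showing that the compression $T := \tilde p|_{\operatorname{ran}(p)} : \operatorname{ran}(p) \to \operatorname{ran}(\tilde p)$ is a bounded linear isomorphism, then to verify injectivity via a norm estimate and surjectivity via an adjoint/density argument.

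First I would observe that for any $v \in \operatorname{ran}(p)$ one has $pv = v$, hence
\[
(I + \tilde p - p)v = v + \tilde p v - v = \tilde p v \in \operatorname{ran}(\tilde p),
\]
so the operator $I+\tilde p - p$ does carry $\operatorname{ran}(p)$ into $\operatorname{ran}(\tilde p)$, and on this subspace it coincides with $T$. Thus it suffices to prove that $T$ is a Banach-space isomorphism onto $\operatorname{ran}(\tilde p)$.

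Next I would establish a two-sided norm estimate. For $v \in \operatorname{ran}(p)$, writing $v = pv$ gives
\[
\|Tv\| = \|\tilde p v\| \geq \|v\| - \|\tilde p v - v\| = \|v\| - \|(\tilde p - p)v\| \geq (1 - \|p - \tilde p\|)\|v\|,
\]
and since $\|p - \tilde p\| < 1$, this shows $T$ is bounded below. Hence $T$ is injective with closed range inside $\operatorname{ran}(\tilde p)$.

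The main obstacle is surjectivity onto $\operatorname{ran}(\tilde p)$, which in infinite dimensions does not follow from injectivity. I would handle this by showing $\operatorname{ran}(T)$ is dense in $\operatorname{ran}(\tilde p)$. Note that for $v \in \operatorname{ran}(p)$ and $w \in \operatorname{ran}(\tilde p)$, we have $\langle Tv, w\rangle = \langle \tilde p v, w\rangle = \langle v, \tilde p w\rangle = \langle v, w\rangle = \langle pv, w\rangle = \langle v, pw\rangle$, so the Hilbert-space adjoint of $T$ (viewed as an operator between the subspaces $\operatorname{ran}(p)$ and $\operatorname{ran}(\tilde p)$) is $T^* w = pw$ for $w \in \operatorname{ran}(\tilde p)$. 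Now suppose $w \in \operatorname{ran}(\tilde p)$ is orthogonal to $\operatorname{ran}(T)$; then $T^*w = pw = 0$. Since $w = \tilde p w$, this forces $w = \tilde p w - p w = (\tilde p - p)w$, whence $\|w\| \leq \|p - \tilde p\|\,\|w\|$, which, because $\|p-\tilde p\|<1$, forces $w = 0$. Combining density with the closed range from the previous step gives $\operatorname{ran}(T) = \operatorname{ran}(\tilde p)$, completing the proof.
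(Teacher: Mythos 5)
Your proof is correct, but it takes a genuinely different route from the one the paper relies on. The paper (and Lemma 2.4 of \cite{Park}, to which it defers) first invokes the Neumann series to conclude that $I+\tilde p-p$ is invertible as an operator on all of $H$, and then argues surjectivity onto $\operatorname{ran}(\tilde p)$ by playing $I+\tilde p-p$ against the companion operator $I+p-\tilde p$, using the algebraic identity $(I+\tilde p-p)(I+p-\tilde p)=I-(p-\tilde p)^2$ and the fact that $(p-\tilde p)^2$ commutes with $\tilde p$. You instead restrict directly to the compression $T=\tilde p|_{\operatorname{ran}(p)}$, obtain the lower bound $\|Tv\|\ge (1-\|p-\tilde p\|)\|v\|$ (giving injectivity and closed range), and then prove dense range by computing the adjoint $T^\ast w=pw$ and showing its kernel in $\operatorname{ran}(\tilde p)$ is trivial: if $pw=0$ and $\tilde p w=w$, then $w=(\tilde p-p)w$ forces $w=0$. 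This is the standard ``bounded below plus dense range'' criterion for surjectivity between Hilbert spaces, and it is entirely self-contained: you never need the global invertibility of $I+\tilde p-p$, the companion operator, or the commutation relation. The paper's approach is shorter on the page because it offloads the work to cited results; yours costs a few more lines but is transparent, avoids the finite-dimensional dimension-count that the matrix version of this lemma implicitly uses, and works verbatim in the infinite-dimensional setting the paper needs.
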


\begin{proposition}
Let $T^{\infty}$ be a topological space with Tychonoff product topology.
The map $\pi$ makes $E$ into a fiber bundle over $C$ with fiber homeomorphic to $T^{\infty}$.
\end{proposition}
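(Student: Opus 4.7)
The plan is to construct explicit local trivializations of $\pi$ around each base point, using Proposition 2.4 to identify each fiber with $T^\infty$ and using Lemmas 2.2, 2.3, and 2.5 to glue these identifications together continuously. Fix $(\mathcal{P}_0, \mathcal{Q}_0, \sigma_0) \in C$ and any $U_0 \in \pi^{-1}((\mathcal{P}_0, \mathcal{Q}_0, \sigma_0))$; such a $U_0$ exists because one may choose an isometry $ran(p)\to ran(\sigma_0(p))$ for each $p \in \mathcal{P}_0$ and assemble them. Restrict attention to the open neighborhood $V=\{(\mathcal{P},\mathcal{Q},\sigma):d((\mathcal{P},\mathcal{Q},\sigma),(\mathcal{P}_0,\mathcal{Q}_0,\sigma_0))<1/4\}$. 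By Lemma 2.2 the infimum in the definition of $d$ is then realized by a unique bijection $\tilde{\tau}=\tilde{\tau}_{(\mathcal{P},\mathcal{Q},\sigma)}:\mathcal{P}_0\to\mathcal{P}$, and Lemma 2.3 ensures that $p\mapsto \tilde{\tau}(p)$ and $p\mapsto \sigma\tilde{\tau}(p)$ vary continuously in norm with $(\mathcal{P},\mathcal{Q},\sigma)\in V$.

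Next I would build a local section $s:V\to E$. For each $p\in\mathcal{P}_0$, the bounds $\|p-\tilde{\tau}(p)\|<1/4$ and $\|\sigma_0(p)-\sigma\tilde{\tau}(p)\|<1/4$ together with Lemma 2.5 provide isomorphisms $I+\tilde{\tau}(p)-p:ran(p)\to ran(\tilde{\tau}(p))$ and $I+\sigma\tilde{\tau}(p)-\sigma_0(p):ran(\sigma_0(p))\to ran(\sigma\tilde{\tau}(p))$; since the targets are one-dimensional, canonical rescaling via polar decomposition turns these into isometries, and combining them with the isometry $U_0\vert_{ran(p)}$ produces an isometry $ran(\tilde{\tau}(p))\to ran(\sigma\tilde{\tau}(p))$ depending continuously in norm on $(\mathcal{P},\mathcal{Q},\sigma)$. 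Assembling strongly over $p\in\mathcal{P}_0$ yields a unitary $s(\mathcal{P},\mathcal{Q},\sigma)\in\mathbb{U}(H)$ with $s\,\tilde{\tau}(p)\,s^*=\sigma\tilde{\tau}(p)$. Proposition 2.4 then exhibits the fiberwise bijection
\[
\phi:V\times T^\infty\longrightarrow \pi^{-1}(V),\ \ \phi\bigl((\mathcal{P},\mathcal{Q},\sigma),(z_p)_{p\in\mathcal{P}_0}\bigr)=\Bigl((\mathcal{P},\mathcal{Q},\sigma),\ s\text{-}\lim_F\sum_{p\in F}z_p\,\sigma\tilde{\tau}(p)\,s(\mathcal{P},\mathcal{Q},\sigma)\,\tilde{\tau}(p)\Bigr).
\]

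The hard part is verifying that $\phi$ and $\phi^{-1}$ are continuous between the product topology on $V\times T^\infty$ (with $T^\infty$ carrying the Tychonoff topology) and the strong operator topology on $\mathbb{U}(H)$ in the $E$-coordinate. Given $v\in H$ and $\epsilon>0$, the relation $\sum_{p\in\mathcal{P}_0}p=I$ in the strong sense produces a finite $F_0\subset\mathcal{P}_0$ with $\|v-\sum_{p\in F_0}pv\|<\epsilon$, so only the finitely many coordinates $z_p$ with $p\in F_0$ affect the image vector $\phi(\cdots)v$ up to error comparable to $\epsilon$; this is precisely the sort of finite-coordinate condition that a basic open set of $T^\infty$ controls, and combined with the norm continuity of $\tilde{\tau}$ from Lemma 2.3 and of the Lemma 2.5 isomorphisms it gives continuity of $\phi$. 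For $\phi^{-1}$ one reads off each $z_p$ as the inner product $\langle U v_p,\sigma\tilde{\tau}(p)\,s(\mathcal{P},\mathcal{Q},\sigma)v_p\rangle$ for a chosen unit vector $v_p\in ran(\tilde{\tau}(p))$, and the same continuity ingredients apply. Once local triviality over $V$ is established, varying the base point covers $C$ and shows $\pi:E\to C$ is a fiber bundle with fiber $T^\infty$.
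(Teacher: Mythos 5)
Your proposal is correct and follows essentially the same route as the paper's proof: the $1/4$-ball around a fixed base point, the unique realizing bijection $\tilde{\tau}$ from Lemma~2.2, norm-continuity of $\tilde{\tau}(p)$ and $\tilde{\sigma}\tilde{\tau}(p)$ from Lemma~2.3, the $I+\tilde{p}-p$ isomorphism from Lemma~2.5 to transport unit vectors between ranges, extraction of the circle coordinates via inner products, and the Tychonoff/strong-operator continuity check that reduces to finitely many coordinates. The only cosmetic difference is that you phrase the trivialization through an explicit local section combined with Proposition~2.4, whereas the paper directly writes down the mutually inverse maps $\phi$ and $\psi$ (your chosen section corresponds to $\psi$ evaluated at the constant-$1$ element of $T^\infty$), so there is no real divergence in method.
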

\begin{proof}
Fixing any $(\mathcal{P},\mathcal{Q},\sigma)$, let
$$O=\{(\tilde{\mathcal{P}},\tilde{\mathcal{Q}},\tilde{\sigma})\in C:d((\mathcal{P},\mathcal{Q},\sigma),(\tilde{\mathcal{P}},\tilde{\mathcal{Q}},\tilde{\sigma}))<\frac{1}{4}\}$$
and by Lemma 2.2, let $\tilde{\tau}:\mathcal{P}\to\tilde{\mathcal{P}}$ be the bijection which realizes the distance, i.e.
\[d((\mathcal{P},\mathcal{Q},\sigma),(\tilde{\mathcal{P}},\tilde{\mathcal{Q}},\tilde{\sigma}))=\sup\{||p-\tilde{\tau}(p)||,||\sigma(p)-\tilde{\sigma}\tilde{\tau}(p)||:p\in\mathcal{P}\}<\frac{1}{4}\]

Since $||p-\tilde{\tau}(p)||<1$, by Lemma 2.5, $I+\tilde{\tau}(p)-p$ maps $ran(p)$ isometrically onto $ran(\tilde{\tau}(p))$. So for any $v_p(\neq 0)\in ran(p)$, $(I+\tilde{\tau}(p)-p)v_p(\neq 0)\in ran(\tilde{\tau}(p))$. Again since $||\sigma(p)-\tilde{\sigma}\tilde{\tau}(p)||<1$, by Lemma 2.5, $I+\tilde{\sigma}\tilde{\tau}(p)-\sigma(p)$ maps $ran(\sigma(p))$ isometrically onto $ran(\tilde{\sigma}\tilde{\tau}(p))$. So for any $w_p(\neq 0)\in ran(\sigma(p))$, $(I+\tilde{\sigma}\tilde{\tau}(p)-\sigma(p))w_p(\neq 0)\in ran(\tilde{\sigma}\tilde{\tau}(p))$. For any $((\tilde{\mathcal{P}},\tilde{\mathcal{Q}},\tilde{\sigma}),\tilde{U})\in E$ with $(\tilde{\mathcal{P}},\tilde{\mathcal{Q}},\tilde{\sigma})\in O$, denote
\[z_{\tilde{\tau},p}=\langle \tilde{U}(\frac{(I+\tilde{\tau}(p)-p)v_p}{||(I+\tilde{\tau}(p)-p)v_p||}),\frac{(I+\tilde{\sigma}\tilde{\tau}(p)-\sigma(p))w_p}{||(I+\tilde{\sigma}\tilde{\tau}(p)-\sigma(p))w_p||}\rangle\]
and we have $|z_{\tilde{\tau},p}|=1$.\

Write $T^{\infty}=\prod_{p\in\mathcal{P}}S^1$ and define $\phi:\pi^{-1}(O)\to O\times \prod_{p\in\mathcal{P}}S^1$ by
\[\phi((\tilde{\mathcal{P}},\tilde{\mathcal{Q}},\tilde{\sigma}),\tilde{U})=((\tilde{\mathcal{P}},\tilde{\mathcal{Q}},\tilde{\sigma}),\prod_{p\in\mathcal{P}}z_{\tilde{\tau},p})\]
with the product topology of metric topology and strong operator topology on $\pi^{-1}(O)$ and the product topology of metric topology and Tychonoff product topology on $O\times \prod_{p\in\mathcal{P}}S^1$.\

$\mathbf{Claim~1:}$ $\phi$ is continuous.\

For a net $\{((\tilde{\mathcal{P}}_{\alpha},\tilde{\mathcal{Q}}_{\alpha},\tilde{\sigma}_{\alpha}),\tilde{U}_{\alpha})\}_{\alpha}$ with $((\tilde{\mathcal{P}}_{\alpha},\tilde{\mathcal{Q}}_{\alpha},\tilde{\sigma}_{\alpha}),\tilde{U}_{\alpha})\rightarrow ((\tilde{\mathcal{P}},\tilde{\mathcal{Q}},\tilde{\sigma}),\tilde{U})$, we have
\begin{eqnarray}
\begin{cases}
(\tilde{\mathcal{P}}_{\alpha},\tilde{\mathcal{Q}}_{\alpha},\tilde{\sigma}_{\alpha})\rightarrow (\tilde{\mathcal{P}},\tilde{\mathcal{Q}},\tilde{\sigma}) \text{ in metric topology } \cr
\tilde{U}_{\alpha}\rightarrow \tilde{U} \text{ in strong operator topology }
\end{cases}
\end{eqnarray}

By Lemma 2.3, the top line of (2.5) implies $\tilde{\tau}_{\alpha}(p)\to \tilde{\tau}(p)$ and $\tilde{\sigma}_{\alpha}\tilde{\tau}_{\alpha}(p)\to\tilde{\sigma}\tilde{\tau}(p)$ uniformly for $\forall p\in \mathcal{P}$, in which $\tilde{\tau}_{\alpha},\tilde{\tau}$ realize the infimums of the distance $d((\mathcal{P},\mathcal{Q},\sigma),(\tilde{\mathcal{P}}_{\alpha},\tilde{\mathcal{Q}}_{\alpha},\tilde{\sigma}_{\alpha})),
d((\mathcal{P},\mathcal{Q},\sigma),(\tilde{\mathcal{P}},\tilde{\mathcal{Q}},\tilde{\sigma}))$ respectively. So for any $\epsilon>0$, for $\alpha$ big enough,
\begin{align*}
&|z_{\tilde{\tau}_{\alpha},p}-\langle\tilde{U}_{\alpha}(\frac{(I+\tilde{\tau}(p)-p)v_p}{||(I+\tilde{\tau}(p)-p)v_p||}),\frac{(I+\tilde{\sigma}\tilde{\tau}(p)-\sigma(p))w_p}{||(I+\tilde{\sigma}\tilde{\tau}(p)-\sigma(p))w_p||}\rangle|
 \\
&=|z_{\tilde{\tau}_{\alpha},p}-\langle \tilde{U}_{\alpha}(\frac{(I+\tilde{\tau}(p)-p)v_p}{||(I+\tilde{\tau}(p)-p)v_p||}),\frac{(I+\tilde{\sigma}_{\alpha}\tilde{\tau}_{\alpha}(p)-\sigma(p))w_p}{||(I+\tilde{\sigma}_{\alpha}\tilde{\tau}_{\alpha}(p)-\sigma(p))w_p||}\rangle
\\
&+\langle \tilde{U}_{\alpha}(\frac{(I+\tilde{\tau}(p)-p)v_p}{||(I+\tilde{\tau}(p)-p)v_p||}),\frac{(I+\tilde{\sigma}_{\alpha}\tilde{\tau}_{\alpha}(p)-\sigma(p))w_p}{||(I+\tilde{\sigma}_{\alpha}\tilde{\tau}_{\alpha}(p)-\sigma(p))w_p||}\rangle
\\
&-\langle\tilde{U}_{\alpha}(\frac{(I+\tilde{\tau}(p)-p)v_p}{||(I+\tilde{\tau}(p)-p)v_p||}),\frac{(I+\tilde{\sigma}\tilde{\tau}(p)-\sigma(p))w_p}{||(I+\tilde{\sigma}\tilde{\tau}(p)-\sigma(p))w_p||}\rangle|
\\
&\leq ||\tilde{U}_{\alpha}||\cdot||\frac{(I+\tilde{\tau}_{\alpha}(p)-p)v_p}{||(I+\tilde{\tau}_{\alpha}(p)-p)v_p||}-\frac{(I+\tilde{\tau}(p)-p)v_p}{||(I+\tilde{\tau}(p)-p)v_p||}||
\\
&+||\tilde{U}_{\alpha}||\cdot||\frac{(I+\tilde{\sigma}_{\alpha}\tilde{\tau}_{\alpha}(p)-\sigma(p))w_p}{||(I+\tilde{\sigma}_{\alpha}\tilde{\tau}_{\alpha}(p)-\sigma(p))w_p||}-\frac{(I+\tilde{\sigma}\tilde{\tau}(p)-\sigma(p))w_p}{||(I+\tilde{\sigma}\tilde{\tau}(p)-\sigma(p))w_p||}||\leq 2\epsilon
\end{align*}

By the bottom line of (2.5), for $\alpha$ big enough,
\begin{multline*}
||\langle\tilde{U}_{\alpha}(\frac{(I+\tilde{\tau}(p)-p)v_p}{||(I+\tilde{\tau}(p)-p)v_p||}),\frac{(I+\tilde{\sigma}\tilde{\tau}(p)-\sigma(p))w_p}{||(I+\tilde{\sigma}\tilde{\tau}(p)-\sigma(p))w_p||}\rangle\\
- \langle\tilde{U}(\frac{(I+\tilde{\tau}(p)-p)v_p}{||(I+\tilde{\tau}(p)-p)v_p||}),\frac{(I+\tilde{\sigma}\tilde{\tau}(p)-\sigma(p))w_p}{||(I+\tilde{\sigma}\tilde{\tau}(p)-\sigma(p))w_p||}\rangle
||\leq \epsilon
\end{multline*}
in which $$\langle\tilde{U}(\frac{(I+\tilde{\tau}(p)-p)v_p}{||(I+\tilde{\tau}(p)-p)v_p||}),\frac{(I+\tilde{\sigma}\tilde{\tau}(p)-\sigma(p))w_p}{||(I+\tilde{\sigma}\tilde{\tau}(p)-\sigma(p))w_p||}\rangle
=z_{\tau,p}$$

So for $\alpha$ big enough, $|z_{\tilde{\tau}_{\alpha},p}-z_{\tilde{\tau},p}|\leq 3\epsilon$. \

Next, we need to define a map $\psi:O\times \prod^{\infty} S^1\to \pi^{-1}(O)$.\

 Take $((\tilde{\mathcal{P}},\tilde{\mathcal{Q}},\tilde{\sigma}),\prod_{p\in\mathcal{P}}\zeta_{\tilde{\tau}(p)})\in O\times \prod^{\infty}S^1$ and for $\forall p\in \mathcal{P}$ and $\forall v_p,w_p$ as above, by the definition of $\mathcal{P}$ and $\mathcal{Q}$, we can span the sets
\[\{\frac{(I+\tilde{\tau}(p)-p)v_p}{||(I+\tilde{\tau}(p)-p)v_p||}:p\in\mathcal{P}\},\{\frac{(I+\tilde{\sigma}\tilde{\tau}(p)-\sigma(p))w_p}{||(I+\tilde{\sigma}\tilde{\tau}(p)-\sigma(p))w_p||}:p\in\mathcal{P}\}\]
into a same Hilbert space $H$. So there is a unitary operator $\tilde{U}$ defined by
\[
\tilde{U}(\frac{(I+\tilde{\tau}(p)-p)v_p}{||(I+\tilde{\tau}(p)-p)v_p||})
=\zeta_{\tilde{\tau}(p)}\frac{(I+\tilde{\sigma}\tilde{\tau}(p)-\sigma(p))w_p}{||(I+\tilde{\sigma}\tilde{\tau}(p)-\sigma(p))w_p||}
,~~\forall p\in \mathcal{P}
\]
Therefore $\tilde{U}$ maps $ran(\tilde{\tau}(p))$ onto $ran(\tilde{\sigma}(\tilde{\tau}(p)))$, $\forall p\in \mathcal{P}$ and hence $\tilde{U}\tilde{p}\tilde{U}^*=\tilde{\sigma}(\tilde{p})$. Thus for $((\tilde{\mathcal{P}},\tilde{\mathcal{Q}},\tilde{\sigma}),\tilde{U})\in \pi^{-1}(O)$, we can define
\[\psi((\tilde{\mathcal{P}},\tilde{\mathcal{Q}},\tilde{\sigma}),\prod_{p\in \mathcal{P}}\zeta_{\tilde{\tau}(p)})
=((\tilde{\mathcal{P}},\tilde{\mathcal{Q}},\tilde{\sigma}),\tilde{U})\]

$\mathbf{Claim~2:}$ $\psi$ is continuous.\

Assuming $((\tilde{\mathcal{P}}_{\alpha},\tilde{\mathcal{Q}}_{\alpha},\tilde{\sigma}_{\alpha}),\prod_{p\in \mathcal{P}}\zeta_{\tilde{\tau}(p)}^{\alpha})\rightarrow ((\tilde{\mathcal{P}},\tilde{\mathcal{Q}},\tilde{\sigma}),\prod_{p\in \mathcal{P}}\zeta_{\tilde{\tau}(p)})$, so
\begin{eqnarray}
\begin{cases}
(\tilde{\mathcal{P}}_{\alpha},\tilde{\mathcal{Q}}_{\alpha},\tilde{\sigma}_{\alpha})\rightarrow (\tilde{\mathcal{P}},\tilde{\mathcal{Q}},\tilde{\sigma}) \text{ in metric topology } \cr
\zeta_{\tilde{\tau}(p)}^{\alpha}\rightarrow \zeta_{\tilde{\tau}(p)} ~~~,\forall p\in \mathcal{P}
\end{cases}
\end{eqnarray}

Again by Lemma 2.3, the top line of (2.6) implies $\tilde{\tau}_{\alpha}(p)\to \tilde{\tau}(p)$ and $\tilde{\sigma}_{\alpha}\tilde{\tau}_{\alpha}(p)\to\sigma\tau(p)$ uniformly for any $p\in \mathcal{P}$, in which $\tilde{\tau}_{\alpha},\tilde{\tau}$ realize the infimums of the distance $d((\mathcal{P},\mathcal{Q},\sigma),(\tilde{\mathcal{P}}_{\alpha},\tilde{\mathcal{Q}}_{\alpha},\tilde{\sigma}_{\alpha}))
,d((\mathcal{P},\mathcal{Q},\sigma),(\tilde{\mathcal{P}},\tilde{\mathcal{Q}},\tilde{\sigma}))$ respectively.\

Denote $$\psi((\tilde{\mathcal{P}}_{\alpha},\tilde{\mathcal{Q}}_{\alpha},\tilde{\sigma}_{\alpha}),\prod_{p\in \mathcal{P}}\zeta_{\tilde{\tau}(p)}^{\alpha})=((\tilde{\mathcal{P}}_{\alpha},\tilde{\mathcal{Q}}_{\alpha},\tilde{\sigma}_{\alpha}),\tilde{U}_{\alpha})$$
in which $\tilde{U}_{\alpha}$ is defined by
\[
\tilde{U}_{\alpha}(\frac{(I+\tilde{\tau}_{\alpha}(p)-p)v_p}{||(I+\tilde{\tau}_{\alpha}(p)-p)v_p||})
=\zeta_{\tilde{\tau}(p)}^{\alpha}(\frac{(I+\tilde{\sigma}_{\alpha}\tilde{\tau}_{\alpha}(p)-\sigma(p))w_p}{||(I+\tilde{\sigma}_{\alpha}\tilde{\tau}_{\alpha}(p)-\sigma(p))w_p||})
,~~\forall p\in \mathcal{P}\]

So we have, for any $\epsilon>0$, for $\alpha$ big enough,
\[
||\frac{(I+\tilde{\tau}_{\alpha}(p)-p)v_p}{||(I+\tilde{\tau}_{\alpha}(p)-p)v_p||}
-\frac{(I+\tilde{\tau}(p)-p)v_p}{||(I+\tilde{\tau}(p)-p)v_p||}||<\epsilon
\]
and hence
\[
||\tilde{U}_{\alpha}\frac{(I+\tilde{\tau}_{\alpha}(p)-p)v_p}{||(I+\tilde{\tau}_{\alpha}(p)-p)v_p||}
-\tilde{U}_{\alpha}\frac{(I+\tilde{\tau}(p)-p)v_p}{||(I+\tilde{\tau}(p)-p)v_p||}||<\epsilon
\]
for $||\tilde{U}_{\alpha}||=1$.
Similarly, for $\alpha$ big enough,
\[
||\frac{(I+\tilde{\sigma}_{\alpha}\tilde{\tau}_{\alpha}(p)-\sigma(p))w_p}{||(I+\tilde{\sigma}_{\alpha}\tilde{\tau}_{\alpha}(p)-\sigma(p))w_p||}
-\frac{(I+\tilde{\sigma}\tilde{\tau}(p)-\sigma(p))w_p}{||(I+\tilde{\sigma}\tilde{\tau}(p)-\sigma(p))w_p||}||
<\epsilon\]

Since $\zeta_{\tilde{\tau}(p)}^{\alpha}\rightarrow \zeta_{\tilde{\tau}(p)} ,\forall p\in \mathcal{P}$, for $\alpha$ big enough,
\[||\zeta_{\tilde{\tau}(p)}^{\alpha}\frac{(I+\tilde{\sigma}\tilde{\tau}(p)-\sigma(p))w_{p}}{||(I+\tilde{\sigma}\tilde{\tau}(p)-\sigma(p))w_{p}||}
-\zeta_{\tilde{\tau}(p)}\frac{(I+\tilde{\sigma}\tilde{\tau}(p)-\sigma(p))w_{p}}{||(I+\tilde{\sigma}\tilde{\tau}(p)-\sigma(p))w_{p}||}||<\epsilon\]
in which
\[\zeta_{\tilde{\tau}(p)}\frac{(I+\tilde{\sigma}\tilde{\tau}(p)-\sigma(p))w_{p}}{||(I+\tilde{\sigma}\tilde{\tau}(p)-\sigma(p))w_{p}||}
=\tilde{U}(\frac{(I+\tilde{\tau}(p)-p)v_p}{||(I+\tilde{\tau}(p)-p)v_p||})
\]

So $\forall p\in\mathcal{P}$, for $\alpha$ big enough
\begin{multline*}
||\tilde{U}_{\alpha}(\frac{(I+\tilde{\tau}(p)-p)v_p}{||(I+\tilde{\tau}(p)-p)v_p||}) -\tilde{U}(\frac{(I+\tilde{\tau}(p)-p)v_p}{||(I+\tilde{\tau}(p)-p)v_p||})||\\
\leq||\tilde{U}_{\alpha}(\frac{(I+\tilde{\tau}(p)-p)v_p}{||(I+\tilde{\tau}(p)-p)v_p||}) -\tilde{U}_{\alpha}(\frac{(I+\tilde{\tau}_{\alpha}(p)-p)v_p}{||(I+\tilde{\tau}_{\alpha}(p)-p)v_p||}) ||\\
+||\zeta_{\tilde{\tau}(p)}^{\alpha}\frac{(I+\tilde{\sigma}_{\alpha}\tilde{\tau}_{\alpha}(p)-\sigma(p))w_{p}}{||(I+\tilde{\sigma}_{\alpha}\tilde{\tau}_{\alpha}(p)-\sigma(p))w_{p}||}-
\tilde{U}(\frac{(I+\tilde{\tau}(p)-p)v_p}{||(I+\tilde{\tau}(p)-p)v_p||})||\leq 2\epsilon
\end{multline*}
and hence $\tilde{U}_{\alpha}\to \tilde{U}$ in strong operator topology.\

It is easy to see the map $\phi$ and $\psi$ are inverses of each other and hence $\phi$ is a homeomorphism. Therefore $E$ is a fiber bundle over $C$.
\end{proof}

\subsection{Back to operators}

For a compact Hausdorff space $X$, let $A\in \mathbb{K}(C(X))$. We say $A$ is multiplicity-free if any $\lambda(x)$ in the eigenvalue set $Eig(A(x))$  has multiplicity one.
Let $A,B$ be two normal multiplicity-free elements in $\mathbb{K}(C(X))$ with $Eig(A(x))=Eig(B(x))$, $\forall x\in X$. For any fixed $x\in X$, given an element $\lambda(x)\in Eig(A(x))$, we can associate to $\lambda(x)$ the eigenprojection $P(x)_{\lambda(x)}$ of $A(x)$. Similarly, we can associate to $\lambda(x)$ the eigenprojection $Q(x)_{\lambda(x)}$ of $B(x)$. Thus, $\forall x\in X$, let $\mathcal{P}$ be the eigenprojection set of $A(x)$ and $\mathcal{Q}$ be the eigenprojection set of $B(x)$, then we can define a bijection $\sigma$ from $\mathcal{P}$ to $\mathcal{Q}$ s.t. $\sigma(P(x)_{\lambda(x)})=Q(x)_{\lambda(x)}$, $\forall \lambda(x)\in Eig(A(x))=Eig(B(x))$, $\forall x\in X$, which determines an element in $C$. Since the eigenprojections of $A(x)$ and $B(x)$ vary continuously, we can assign to the pair $(A,B)$ a continuous map $\Phi_{A,B}:X\to C$.

\begin{theorem}
Let $A,B$ be two normal multiplicity-free elements in $\mathbb{K}(C(X))$ with $Eig(A(x))=Eig(B(x))$, $\forall x\in X$, then $A,B$ are strongly unitarily equivalent $\Leftrightarrow$ the map $\Phi_{A,B}:X\to C$ lifts to a continuous map $\tilde{\Phi}_{A,B}:X\to E$.
\end{theorem}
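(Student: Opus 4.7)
The plan is to verify both directions by unpacking the definitions; the bundle $E$ has been designed so that points in its fiber over $\Phi_{A,B}(x)$ are exactly the unitaries carrying the eigenprojection set of $A(x)$ onto that of $B(x)$, and the fiber topology is the strong operator topology restricted to $\mathbb{U}(H)$. With this setup, the theorem reduces to checking that a strongly continuous intertwining unitary exists if and only if the associated map into $E$ is continuous.

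For the forward direction, suppose $A$ and $B$ are strongly unitarily equivalent via $W\in\mathbb{U}(SC(X))$, so $B(x)=W(x)^*A(x)W(x)$. I set $U(x):=W(x)^*$, which gives $U(x)A(x)U(x)^*=B(x)$. Since each $\lambda(x)$ is a simple eigenvalue of $B(x)$, the rank-one projection $U(x)P(x)_{\lambda(x)}U(x)^*$ must coincide with $Q(x)_{\lambda(x)}=\sigma(P(x)_{\lambda(x)})$, as both have range equal to the unique one-dimensional $\lambda(x)$-eigenspace of $B(x)$. Hence $((\Phi_{A,B}(x)),U(x))\in E$, so $\tilde{\Phi}_{A,B}(x):=(\Phi_{A,B}(x),U(x))$ is a well-defined lift. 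Continuity is immediate: $\Phi_{A,B}$ is continuous by construction, and $x\mapsto U(x)=W(x)^*$ is strongly continuous, since the adjoint is strongly continuous on the unitary group (using $\|(U_n^*-U^*)v\|=\|(U_n-U)U^*v\|$).

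Conversely, given a continuous lift $\tilde{\Phi}_{A,B}(x)=(\Phi_{A,B}(x),U(x))$, the defining condition of $E$ gives $U(x)P(x)_{\lambda(x)}U(x)^*=Q(x)_{\lambda(x)}$ for every $\lambda(x)\in Eig(A(x))$. Using the spectral decompositions $A(x)=s\text{-}\lim_{F}\sum_{p\in F}\lambda_p(x)p$ and $B(x)=s\text{-}\lim_{F}\sum_{p\in F}\lambda_p(x)\sigma(p)$, valid since $A(x)$ and $B(x)$ are normal compact, and the fact that conjugation by the bounded operator $U(x)$ commutes with these strong limits, I obtain $U(x)A(x)U(x)^*=B(x)$. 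Setting $W(x):=U(x)^*$ yields $B(x)=W(x)^*A(x)W(x)$, with $W$ strongly continuous for the reason above, noting that $U$ itself is strongly continuous by continuity of $\tilde{\Phi}_{A,B}$ composed with projection onto the fiber coordinate of $E\subset C\times\mathbb{U}(H)$.

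The proof is thus essentially a verification, and no serious obstacle presents itself. The two points requiring care are (i) tracking the direction of conjugation so that the $UpU^*=\sigma(p)$ convention defining $E$ matches the $B=U^*AU$ convention for strong unitary equivalence, which is handled by the substitution $W=U^*$, and (ii) commuting conjugation by $U(x)$ past the strong-limit spectral sum, which is automatic since $U(x)$ is bounded and the partial sums converge strongly.
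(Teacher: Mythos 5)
Your proof is correct and follows essentially the same route as the paper's: both directions reduce to the observation that a unitary intertwines $A(x)$ and $B(x)$ if and only if it carries each eigenprojection $P(x)_{\lambda(x)}$ to $Q(x)_{\lambda(x)}$, which is exactly the pointwise membership condition for $E$. You fill in details the paper leaves implicit — the adjoint-convention bookkeeping via $W=U^*$, strong continuity of $U\mapsto U^*$ on the unitary group, and commuting conjugation past the strong-limit spectral sum — but the structure and the key idea are identical.
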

\begin{proof}
If $UAU^*=B$ for some $U\in \mathbb{U}(SC(X))$, then $U(x)A(x)U(x)^*=B(x)$, $\forall x\in X$. So unitary $U(x)$ conjugates each eigenprojection of $A(x)$ to the corresponding eigenprojection of $B(x)$, $\forall x\in X$ i.e. $U(x)P(x)_{\lambda(x)}U(x)^*=Q(x)_{\lambda(x)}$. Therefore $(\Phi_{A,B}(x),U(x))\in E$, $\forall x\in X$ and hence the continuous map $\tilde{\Phi}_{A,B}(x)=(\Phi_{A,B}(x),U(x))$ is a continuous lifting of $\Phi_{A,B}$.\

Conversely, suppose $\pi \tilde{\Phi}_{A,B}=\Phi_{A,B}$ for some continuous map $\tilde{\Phi}_{A,B}:X\to E$. So $\forall x\in X$, $\tilde{\Phi}_{A,B}(x)=(\Phi_{A,B}(x),U(x))$, for some $U\in\mathbb{U}(SC(X))$. So for any $\lambda(x)\in Eig(A(x))=Eig(B(x))$, we have $U(x)P(x)_{\lambda(x)}U(x)^*=Q(x)_{\lambda(x)}$, thus $U(x)A(x)U(x)^*=B(x)$, $\forall x\in X$. 
\end{proof}

\section{Approximately unitary equivalence of normal compact operators over $S^1$}

\subsection{A useful bundle}

We construct a fiber bundle $p: E\rightarrow C_n$, starting with the base space. Let $\mathcal{P}_n,\mathcal{Q}_n$ be the sets of $n$ rank-one pairwise orthogonal projections in $B(H)$ s.t. $\exists$  rank-$n$ projections $I(\mathcal{P}_n)$ and $I(\mathcal{Q}_n)$ satisfies
\[\sum_{p\in\mathcal{P}_n}p=I(\mathcal{P}_n)\text{ and }I(\mathcal{Q}_n)=\sum_{q\in\mathcal{Q}_n}q\]
We do not assume any ordering of the elements in $\mathcal{P}_n$ and $\mathcal{Q}_n$.\

Set
\[C_n=\{(\mathcal{P}_n,\mathcal{Q}_n,\sigma_n):\sigma_n\text{ is a bijection from $\mathcal{P}_n$ to $\mathcal{Q}_n$}\}\]
For $(\mathcal{P}_n,\mathcal{Q}_n,\sigma_n),(\tilde{\mathcal{P}}_n,\tilde{\mathcal{Q}}_n,\tilde{\sigma}_n)\in C_n$, define
\begin{multline*}
  d((\mathcal{P}_n,\mathcal{Q}_n,\sigma_n),(\tilde{\mathcal{P}}_n,\tilde{\mathcal{Q}}_n,\tilde{\sigma}_n))= \\
  \min\{\max\{||p-\tau(p)||,||\sigma_n(p)-\tilde{\sigma}_n\tau(p)||:p\in\mathcal{P}_n\}:\tau \text{ is a bijection from } \mathcal{P}_n \text{ to }\mathcal{Q}_n\}
\end{multline*}
By Proposition 2.1 in \cite{Park}, $C_n$ is a metric space with the metric defined above. Endow the unitary operator set $\mathbb{U}(ran(I(\mathcal{P}_n)),ran((\mathcal{Q}_n)))$ with its usual topology and let $U(\mathcal{P}_n,\mathcal{Q}_n)$ be an element in $\mathbb{U}(ran(I(\mathcal{P}_n)),ran(I(\mathcal{Q}_n)))$ s.t. $$U(\mathcal{P}_n,\mathcal{Q}_n)^*U(\mathcal{P}_n,\mathcal{Q}_n)=I(\mathcal{P}_n)\text{ and } U(\mathcal{P}_n,\mathcal{Q}_n)U(\mathcal{P}_n,\mathcal{Q}_n)^*=I(\mathcal{Q}_n)$$
Define
\begin{multline*}
E_n=\{((\mathcal{P}_n,\mathcal{Q}_n,\sigma_n),U(\mathcal{P}_n,\mathcal{Q}_n))\in C_n\times \mathbb{U}(ran(I(\mathcal{P}_n)),ran(I(\mathcal{Q}_n))):\\
U(\mathcal{P}_n,\mathcal{Q}_n)pU(\mathcal{P}_n,\mathcal{Q}_n)^*=\sigma(p),~~\forall p\in\mathcal{P}_n\}
\end{multline*}
and a map $\pi: E_n\to C_n$ by
\[\pi((\mathcal{P}_n,\mathcal{Q}_n,\sigma_n),U(\mathcal{P}_n,\mathcal{Q}_n))=(\mathcal{P}_n,\mathcal{Q}_n,\sigma_n), ~~~\forall ((\mathcal{P}_n,\mathcal{Q}_n,\sigma_n),U(\mathcal{P}_n,\mathcal{Q}_n))\in E_n\]
Note that $((\mathcal{P}_n,\mathcal{Q}_n,\sigma_n),U(\mathcal{P}_n,\mathcal{Q}_n))\in E_n\Leftrightarrow U(\mathcal{P}_n,\mathcal{Q}_n)$ restricts to an isometric vector space isomorphism from $ran(p)$ to $ran(\sigma(p))$, $\forall p\in\mathcal{P}_n$.\

\begin{proposition}
If $((\mathcal{P}_n,\mathcal{Q}_n,\sigma_n),U(\mathcal{P}_n,\mathcal{Q}_n))\in E_n$, then $$((\mathcal{P}_n,\mathcal{Q}_n,\sigma_n),\tilde{U}(\mathcal{P}_n,\mathcal{Q}_n))\in \pi^{-1}((\mathcal{P}_n,\mathcal{Q}_n,\sigma_n))$$
$$\Leftrightarrow
\tilde{U}(\mathcal{P}_n,\mathcal{Q}_n)=\sum_{p\in \mathcal{P}_n}\tilde{z_p}\sigma(p)U(\mathcal{P}_n,\mathcal{Q}_n)p$$
for some set of $\{\tilde{z_p}\}_{p\in \mathcal{P}_n}$ of complex number of module 1.\

Furthermore, each such $\tilde{U}(\mathcal{P}_n,\mathcal{Q}_n)$ can be uniquely written in this form.
\end{proposition}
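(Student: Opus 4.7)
The plan is to adapt the argument of Proposition 2.4 to the finite-dimensional setting, where every strong limit is replaced by an honest finite sum over the $n$ projections in $\mathcal{P}_n$, and where the ambient spaces are the finite-dimensional ranges $ran(I(\mathcal{P}_n))$ and $ran(I(\mathcal{Q}_n))$ rather than all of $H$. The essential algebraic content—that the restriction of a unitary between two collections of pairwise orthogonal rank-one projections is determined, up to a phase on each summand, by a fixed ``reference'' unitary—does not depend on whether we work over a finite or an infinite orthogonal family.

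For the direction ($\Leftarrow$), I would take the candidate operator $\tilde{U}(\mathcal{P}_n,\mathcal{Q}_n)=\sum_{p\in\mathcal{P}_n}\tilde{z}_p\sigma(p)U(\mathcal{P}_n,\mathcal{Q}_n)p$ and check two things in sequence. First, using $|\tilde{z}_p|=1$, $\sigma(p)=U(\mathcal{P}_n,\mathcal{Q}_n)pU(\mathcal{P}_n,\mathcal{Q}_n)^*$, the pairwise orthogonality of the $p$'s in $\mathcal{P}_n$ and of the $\sigma(p)$'s in $\mathcal{Q}_n$, and the identities $\sum_{p\in\mathcal{P}_n}p=I(\mathcal{P}_n)$, $\sum_{p\in\mathcal{P}_n}\sigma(p)=I(\mathcal{Q}_n)$, a direct computation gives $\tilde{U}(\mathcal{P}_n,\mathcal{Q}_n)^*\tilde{U}(\mathcal{P}_n,\mathcal{Q}_n)=I(\mathcal{P}_n)$ and $\tilde{U}(\mathcal{P}_n,\mathcal{Q}_n)\tilde{U}(\mathcal{P}_n,\mathcal{Q}_n)^*=I(\mathcal{Q}_n)$, so the candidate lies in $\mathbb{U}(ran(I(\mathcal{P}_n)),ran(I(\mathcal{Q}_n)))$. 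Second, for any $p'\in\mathcal{P}_n$, since $pp'=0$ for $p\neq p'$, the sum collapses to $\tilde{U}(\mathcal{P}_n,\mathcal{Q}_n)p'=\tilde{z}_{p'}\sigma(p')U(\mathcal{P}_n,\mathcal{Q}_n)p'$, and a symmetric collapse on the left shows $\tilde{U}(\mathcal{P}_n,\mathcal{Q}_n)p'\tilde{U}(\mathcal{P}_n,\mathcal{Q}_n)^*=\sigma(p')$, placing the pair in $\pi^{-1}((\mathcal{P}_n,\mathcal{Q}_n,\sigma_n))$. This is parallel to the corresponding step in Proposition 2.4, only with finite sums replacing the strong limits.

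For the direction ($\Rightarrow$), suppose $((\mathcal{P}_n,\mathcal{Q}_n,\sigma_n),\tilde{U}(\mathcal{P}_n,\mathcal{Q}_n))\in\pi^{-1}((\mathcal{P}_n,\mathcal{Q}_n,\sigma_n))$. Then both $U(\mathcal{P}_n,\mathcal{Q}_n)$ and $\tilde{U}(\mathcal{P}_n,\mathcal{Q}_n)$ restrict, for each $p\in\mathcal{P}_n$, to isometric isomorphisms from the one-dimensional space $ran(p)$ onto the one-dimensional space $ran(\sigma(p))$; concretely, these restrictions are given by $\sigma(p)U(\mathcal{P}_n,\mathcal{Q}_n)p$ and $\sigma(p)\tilde{U}(\mathcal{P}_n,\mathcal{Q}_n)p$ respectively. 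Since the space of linear maps between two one-dimensional complex vector spaces is itself one-dimensional, there exists a unique scalar $\tilde{z}_p\in\mathbb{C}$ with $\sigma(p)\tilde{U}(\mathcal{P}_n,\mathcal{Q}_n)p=\tilde{z}_p\sigma(p)U(\mathcal{P}_n,\mathcal{Q}_n)p$, and comparing operator norms of isometries forces $|\tilde{z}_p|=1$. Summing over $p\in\mathcal{P}_n$ and using $\sum_{p}p=I(\mathcal{P}_n)$ together with $\sum_{p}\sigma(p)=I(\mathcal{Q}_n)$ yields $\tilde{U}(\mathcal{P}_n,\mathcal{Q}_n)=\sum_{p\in\mathcal{P}_n}\tilde{z}_p\sigma(p)U(\mathcal{P}_n,\mathcal{Q}_n)p$, as required.

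Uniqueness is immediate from the same collapsing trick: if $\sum_p\tilde{z}_p\sigma(p)U(\mathcal{P}_n,\mathcal{Q}_n)p=\sum_p\tilde{z}'_p\sigma(p)U(\mathcal{P}_n,\mathcal{Q}_n)p$, then multiplying by $\sigma(p')$ on the left and by $p'$ on the right isolates $\tilde{z}_{p'}\sigma(p')U(\mathcal{P}_n,\mathcal{Q}_n)p'=\tilde{z}'_{p'}\sigma(p')U(\mathcal{P}_n,\mathcal{Q}_n)p'$, and since this common operator is a nonzero isometry on $ran(p')$ we conclude $\tilde{z}_{p'}=\tilde{z}'_{p'}$. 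There is no real obstacle in this proof; the only point that needs modest care is that, in the ambient Hilbert space $ran(I(\mathcal{P}_n))$, we must verify $\tilde{U}(\mathcal{P}_n,\mathcal{Q}_n)$ is actually a unitary onto $ran(I(\mathcal{Q}_n))$ (and not merely a partial isometry into $H$), which is exactly what the decompositions $I(\mathcal{P}_n)=\sum_{p}p$ and $I(\mathcal{Q}_n)=\sum_{p}\sigma(p)$ are designed to give.
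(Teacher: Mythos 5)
Your proof is correct and takes the same route the paper intends: it adapts the argument of Proposition~2.4 by replacing the strong limits over finite subsets of $\mathcal{P}$ with honest finite sums over $\mathcal{P}_n$, and it correctly accounts for the fact that $U(\mathcal{P}_n,\mathcal{Q}_n)^*U(\mathcal{P}_n,\mathcal{Q}_n)=I(\mathcal{P}_n)$ and $U(\mathcal{P}_n,\mathcal{Q}_n)U(\mathcal{P}_n,\mathcal{Q}_n)^*=I(\mathcal{Q}_n)$ rather than $I$. The paper itself simply cites the analogous proposition in \cite{Park}, so your write-out fills in exactly the expected details with no deviation in method.
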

\begin{proof}
The proof is similar with the proof of Proposition 2.1 in \cite{Park}.
\end{proof}

As a consequence of the proposition above, we can identify $\pi^{-1}((\mathcal{P}_n,\mathcal{Q}_n,\sigma_n))$ with $\mathbb{T}^n\simeq \prod_{p\in\mathcal{P}_n}S^1$. In fact, we will prove that $E_n$ is a $\mathbb{T}^n$-fiber bundle over $C_n$. Firstly we should introduce a technique lemma similar with Lemma 2.4 in \cite{Park}.

\begin{lemma}
Let $p$ and $\tilde{p}$ be projections in $B(H)$ and suppose $||p-\tilde{p}||<1$. Then for the identity $I\in B(H)$, $I+\tilde{p}-p$ maps $ran(p)$ isomorphically onto $ran(\tilde{p})$.
\end{lemma}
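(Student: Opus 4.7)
My plan is to split the statement into three parts. Writing $T := I + \tilde{p} - p$, I would prove (i) $T$ maps $ran(p)$ into $ran(\tilde{p})$; (ii) $T$ is injective on $ran(p)$; (iii) $T|_{ran(p)}$ is surjective onto $ran(\tilde{p})$. Part (i) is a one-line check: for $v \in ran(p)$ we have $pv = v$, hence $Tv = v + \tilde{p}v - v = \tilde{p}v \in ran(\tilde{p})$. Part (ii) I would deduce by noting that $\|T - I\| = \|\tilde{p} - p\| < 1$, so $T$ is invertible on all of $H$ via the Neumann series, and in particular its restriction to $ran(p)$ is injective.

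The only substantive point is surjectivity. My plan is to verify, by a direct expansion using $p^2 = p$ and $\tilde{p}^2 = \tilde{p}$, the algebraic identity
\[(I + \tilde{p} - p)(I + p - \tilde{p}) = I - (p - \tilde{p})^2.\]
Since $\|(p - \tilde{p})^2\| \leq \|p - \tilde{p}\|^2 < 1$, the right-hand side is an invertible operator $S$ on $H$. A short computation gives $(p - \tilde{p})^2 = p + \tilde{p} - p\tilde{p} - \tilde{p}p$, which commutes with $\tilde{p}$, so $S^{-1}$ also commutes with $\tilde{p}$ and therefore preserves $ran(\tilde{p})$. Given $w \in ran(\tilde{p})$, I would set $w' := S^{-1} w \in ran(\tilde{p})$ and $v := (I + p - \tilde{p})w' = pw' \in ran(p)$; the identity above then yields $Tv = w$, proving surjectivity.

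I do not anticipate a serious obstacle. The only step meriting a moment of care is the commutation of $(p - \tilde{p})^2$ with $\tilde{p}$, which reduces to $\tilde{p}(p\tilde{p} + \tilde{p}p) = (p\tilde{p} + \tilde{p}p)\tilde{p}$ and follows immediately from $\tilde{p}^2 = \tilde{p}$. Everything in this argument is algebraic in the relations $p^2 = p = p^*$ and $\tilde{p}^2 = \tilde{p} = \tilde{p}^*$, so the proof transfers verbatim from the finite-dimensional argument of \cite{Park} that was invoked earlier as Lemma 2.5; this is no doubt why the author simply restates the lemma here for the separable infinite-dimensional setting.
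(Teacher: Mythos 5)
Your proof is correct, and it follows the same algebraic route that the paper defers to by citation: the paper simply invokes Theorem 1.2.2 of Murphy for the invertibility of $I+\tilde{p}-p$ (your Neumann-series step) and then refers to Lemma 2.4 of Friedman--Park for the remainder. Your explicit verification --- that $Tv=\tilde p v$ for $v\in ran(p)$, the identity $(I+\tilde p-p)(I+p-\tilde p)=I-(p-\tilde p)^2$, and the commutation of $(p-\tilde p)^2$ with $\tilde p$ --- supplies exactly what the paper leaves to the references, and correctly observes that the argument is purely $C^*$-algebraic and hence transfers unchanged from the matrix setting to $B(H)$.
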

\begin{proof}
Since by Theorem 1.2.2 in \cite{Murphy}, $I+\tilde{p}-p$ is invertible. The rest of proof is exactly the same as Lemma 2.4 in \cite{Park}.
\end{proof}

\begin{proposition}
The map $\pi$ makes $E_n$ into a fiber bundle over $C_n$ with fiber homeomorphic to $\mathbb{T}^{n}$ with usual topology.
\end{proposition}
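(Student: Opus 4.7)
My plan is to mirror the proof of Proposition 2.6, with all strong limits replaced by finite sums. Fix a base point $(\mathcal{P}_n,\mathcal{Q}_n,\sigma_n)\in C_n$ and set
\[
O=\{(\tilde{\mathcal{P}}_n,\tilde{\mathcal{Q}}_n,\tilde{\sigma}_n)\in C_n: d((\mathcal{P}_n,\mathcal{Q}_n,\sigma_n),(\tilde{\mathcal{P}}_n,\tilde{\mathcal{Q}}_n,\tilde{\sigma}_n))<1/4\}.
\]
As in Section 2, the finite-rank analogs of Lemmas 2.2 and 2.3 (whose proofs only use pairwise orthogonality of rank-one projections, so they carry over unchanged) guarantee that each point of $O$ has a unique bijection $\tilde{\tau}:\mathcal{P}_n\to\tilde{\mathcal{P}}_n$ realizing the distance, and that both $\tilde{\tau}(p)$ and $\tilde{\sigma}_n\tilde{\tau}(p)$ depend continuously (in operator norm) on the base point.

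Fix unit vectors $v_p\in ran(p)$ and $w_p\in ran(\sigma_n(p))$ for each $p\in\mathcal{P}_n$. By Lemma 3.2, since $\|p-\tilde{\tau}(p)\|<1$ and $\|\sigma_n(p)-\tilde{\sigma}_n\tilde{\tau}(p)\|<1$, the operators $I+\tilde{\tau}(p)-p$ and $I+\tilde{\sigma}_n\tilde{\tau}(p)-\sigma_n(p)$ carry $v_p$ and $w_p$ to nonzero vectors spanning $ran(\tilde{\tau}(p))$ and $ran(\tilde{\sigma}_n\tilde{\tau}(p))$, respectively. Mimicking Proposition 2.6, define $\phi:\pi^{-1}(O)\to O\times\mathbb{T}^n$ whose $p$-th circle coordinate is
\[
z_p=\Big\langle \tilde{U}(\tilde{\mathcal{P}}_n,\tilde{\mathcal{Q}}_n)\,\frac{(I+\tilde{\tau}(p)-p)v_p}{\|(I+\tilde{\tau}(p)-p)v_p\|},\,\frac{(I+\tilde{\sigma}_n\tilde{\tau}(p)-\sigma_n(p))w_p}{\|(I+\tilde{\sigma}_n\tilde{\tau}(p)-\sigma_n(p))w_p\|}\Big\rangle\in S^1.
\]
For the inverse $\psi:O\times\mathbb{T}^n\to\pi^{-1}(O)$, send $((\tilde{\mathcal{P}}_n,\tilde{\mathcal{Q}}_n,\tilde{\sigma}_n),(\zeta_p))$ to the unique unitary from $ran(I(\tilde{\mathcal{P}}_n))$ to $ran(I(\tilde{\mathcal{Q}}_n))$ that takes the normalized $(I+\tilde{\tau}(p)-p)v_p$ to $\zeta_p$ times the normalized $(I+\tilde{\sigma}_n\tilde{\tau}(p)-\sigma_n(p))w_p$. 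Because these collections form orthonormal bases of the two rank-$n$ subspaces, $\phi$ and $\psi$ are mutually inverse, and continuity in both directions reduces to the $\epsilon$-bookkeeping of Claims 1 and 2 of Proposition 2.6, simplified because all sums are now finite.

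The one genuinely new issue, and the main obstacle, is that the ambient unitary space $\mathbb{U}(ran(I(\tilde{\mathcal{P}}_n)),ran(I(\tilde{\mathcal{Q}}_n)))$ varies with the base point, so a statement such as \textit{$\tilde{U}_\alpha\to\tilde{U}$} requires placing all these operators in a common ambient space. I would handle this by viewing each unitary as a partial isometry in $B(H)$ vanishing on $ran(I(\tilde{\mathcal{P}}_n))^\perp$, and topologizing $E_n$ as a subspace of $C_n\times B(H)$ with $B(H)$ carrying the operator-norm topology; on rank-$n$ operators with prescribed domain and codomain this agrees with the usual topology on $\mathbb{U}(ran(I(\tilde{\mathcal{P}}_n)),ran(I(\tilde{\mathcal{Q}}_n)))$. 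Since the continuity of $I(\tilde{\mathcal{P}}_n)$ and $I(\tilde{\mathcal{Q}}_n)$ in the base point is already encoded in the continuity of $\tilde{\tau}$, the remainder of the continuity argument goes through exactly as in Proposition 2.6.
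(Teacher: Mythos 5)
Your proposal tracks the paper's proof of Proposition 3.3 almost step for step: same neighborhood $O$ of radius $1/4$, same bijection $\tilde\tau$ from the finite-rank Lemma 2.2 of \cite{Park}, same definition of the circle coordinates $z_p$ via the inner-product formula, and the same construction of $\psi$ by building a unitary from the orthonormal bases of $ran(I(\tilde{\mathcal{P}}_n))$ and $ran(I(\tilde{\mathcal{Q}}_n))$, with continuity deduced by the same $\epsilon$-estimates as in Section 2 (only simplified since the sums are finite).

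The one place where you go beyond the paper is worth flagging as genuinely useful rather than merely cosmetic: the paper writes $E_n$ as a subset of $C_n\times\mathbb{U}\bigl(ran(I(\mathcal{P}_n)),ran(I(\mathcal{Q}_n))\bigr)$ and endows the unitary factor with ``its usual topology,'' but since that unitary space depends on the first coordinate this is not literally a Cartesian product, and the topology on $E_n$ is not really pinned down. Your device of regarding each $U(\mathcal{P}_n,\mathcal{Q}_n)$ as a rank-$n$ partial isometry in $B(H)$ vanishing on $ran(I(\mathcal{P}_n))^{\perp}$, and topologizing $E_n$ as a subspace of $C_n\times B(H)$ with the norm topology on the second factor, makes the continuity statements for $\phi$ and $\psi$ unambiguous and is exactly what the paper's proof needs to be read as implicitly assuming. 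This is a sound repair, not a different route; the rest of the argument is the paper's.
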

\begin{proof}
Fixing any $(\mathcal{P}_n,\mathcal{Q}_n,\sigma_n)$, let
$$O_n=\{(\tilde{\mathcal{P}}_n,\tilde{\mathcal{Q}}_n,\tilde{\sigma}_n)\in C:d((\mathcal{P}_n,\mathcal{Q}_n,\sigma_n),(\tilde{\mathcal{P}}_n,\tilde{\mathcal{Q}}_n,\tilde{\sigma}_n))<\frac{1}{4}\}$$
and by Lemma 2.2 in \cite{Park}, let $\tilde{\tau}:\mathcal{P}_n\to\tilde{\mathcal{P}}_n$ be the bijection which realizes the distance, i.e.
\[d((\mathcal{P}_n,\mathcal{Q}_n,\sigma_n),(\tilde{\mathcal{P}}_n,\tilde{\mathcal{Q}}_n,\tilde{\sigma}_n))=\max\{||p-\tilde{\tau}(p)||,||\sigma_n(p)-\tilde{\sigma}_n\tilde{\tau}(p)||:p\in\mathcal{P}_n\}<\frac{1}{4}\]

Taking $(\tilde{\mathcal{P}}_n,\tilde{\mathcal{Q}}_n,\tilde{\sigma}_n)\in O_n$, since $||p-\tilde{\tau}(p)||<1$, by Lemma 3.2, $I+\tilde{\tau}(p)-p$ maps $ran(p)$ isometrically onto $ran(\tilde{\tau}(p))$. So for any $v_p(\neq 0)\in ran(p)$, $(I+\tilde{\tau}(p)-p)v_p(\neq 0)\in ran(\tilde{\tau}(p))$. Again since $||\sigma_n(p)-\tilde{\sigma}_n\tilde{\tau}(p)||<1$, by Lemma 3.2, $I+\tilde{\sigma}_n\tilde{\tau}(p)-\sigma_n(p)$ maps $ran(\sigma_n(p))$ isometrically onto $ran(\tilde{\sigma}_n\tilde{\tau}(p))$. For each $p\in\mathcal{P}_n$,  $\forall w_p(\neq 0)\in ran(\sigma_n(p))$, $(I+\tilde{\sigma}_n\tilde{\tau}(p)-\sigma_n(p))w_p(\neq 0)\in ran(\tilde{\sigma}_n\tilde{\tau}(p))$, we denote
\[z_{\tilde{\tau},p}=\langle \tilde{U}(\tilde{\mathcal{P}}_n,\tilde{\mathcal{Q}}_n)(\frac{(I+\tilde{\tau}(p)-p)v_p}{||(I+\tilde{\tau}(p)-p)v_p||}),\frac{(I+\tilde{\sigma}_n\tilde{\tau}(p)-\sigma_n(p))w_p}{||(I+\tilde{\sigma}_n\tilde{\tau}(p)-\sigma_n(p))w_p||}\rangle\]
and we have $|z_{\tilde{\tau},p}|=1$.\

Write $\mathbb{T}^{n}=\prod_{p\in\mathcal{P}_n}S^1$ and define $\phi:\pi^{-1}(O_n)\to O_n\times \prod_{p\in\mathcal{P}_n}S^1$ by
\[\phi(((\tilde{\mathcal{P}}_n,\tilde{\mathcal{Q}}_n,\tilde{\sigma}_n),\tilde{U}(\tilde{\mathcal{P}}_n,\tilde{\mathcal{Q}}_n)))=((\tilde{\mathcal{P}}_n,\tilde{\mathcal{Q}}_n,\tilde{\sigma}_n),\prod_{p\in\mathcal{P}_n}z_{\tilde{\tau},p})\]
with the product topology on $O_n\times \prod_{p\in\mathcal{P}_n}S^1$.\

To show that $\phi$ is continuous, it clearly suffices to prove that the map $$((\tilde{\mathcal{P}_n},\tilde{\mathcal{Q}}_n,\tilde{\sigma}_n),\tilde{U}(\tilde{\mathcal{P}}_n,\tilde{\mathcal{Q}}_n))\to \tilde{z}_{\tau,p}$$
is continuous for each $p\in\mathcal{P}_n$. Define $\Phi_{p} : \pi^{-1}(O_n) \to H$ by the formula
$$\Phi_{p} ((\tilde{\mathcal{P}}_n,\tilde{\mathcal{Q}}_n,\tilde{\sigma}_n),\tilde{U}(\tilde{\mathcal{P}}_n,\tilde{\mathcal{Q}}_n))=(I + \tilde{\tau}(p)-p )v_p$$

Suppose that $((\tilde{\mathcal{P}}_n,\tilde{\mathcal{Q}}_n,\tilde{\sigma}_n),\tilde{U}(\tilde{\mathcal{P}}_n,\tilde{\mathcal{Q}}_n))$ and $((\hat{\mathcal{P}}_n,\hat{\mathcal{Q}}_n,\hat{\sigma}_n),\hat{U}(\hat{\mathcal{P}}_n,\hat{\mathcal{Q}}_n))$ are in $\pi^{-1}(O_n)$ and $$d((\tilde{\mathcal{P}}_n,\tilde{\mathcal{Q}}_n,\tilde{\sigma}_n),(\hat{\mathcal{P}}_n,\hat{\mathcal{Q}}_n,\hat{\sigma}_n))<\epsilon$$
By the definition of the distance, we obtain
\begin{multline*}
||\Phi_p((\tilde{\mathcal{P}}_n,\tilde{\mathcal{Q}}_n,\tilde{\sigma}_n),\tilde{U}(\tilde{\mathcal{P}}_n,\tilde{\mathcal{Q}}_n))-\Phi_p((\hat{\mathcal{P}}_n,\hat{\mathcal{Q}}_n,\hat{\sigma}_n),\hat{U}(\hat{\mathcal{P}}_n,\hat{\mathcal{Q}}_n))||\\
=||(I+\tilde{\tau}(p)-p)v_p-(I+\hat{\tau}(p)-p)v_p||=||(\tilde{\tau}(p)-\hat{\tau}(p))v_p||\leq ||\tilde{\tau}(p)-\hat{\tau}(p)||<\epsilon
\end{multline*}
in which $\hat{\tau}:\mathcal{P}_n\to\hat{\mathcal{P}}_n$ is the bijection that realizes the minimum of the distance $d((\mathcal{P}_n,\mathcal{Q}_n,\sigma_n),(\hat{\mathcal{P}}_n,\hat{\mathcal{Q}}_n,\hat{\sigma}_n))$.
Therefore $\Phi_p$ is continuous, $\forall p\in\mathcal{P}_n$ and by the similar argument the map $\Psi_{p} : \pi^{-1}(O_n) \to\mathbb{C}^n$ defined by the formula
$$\Psi_{p} ((\tilde{\mathcal{P}}_n,\tilde{\mathcal{Q}}_n,\tilde{\sigma}_n),\tilde{U}(\tilde{\mathcal{P}}_n,\tilde{\mathcal{Q}}_n))=(I+\tilde{\sigma}_n\tilde{\tau}(p)-\sigma_n(p))w_p$$
is also continuous. So by the definition of $z_{\tilde{\tau},p}$, $((\tilde{\mathcal{P}}_n,\tilde{\mathcal{Q}}_n,\tilde{\sigma}_n),\tilde{U}(\tilde{\mathcal{P}}_n,\tilde{\mathcal{Q}}_n))\to \tilde{z}_{\tau,p}$ is continuous for each $p\in\mathcal{P}_n$.\

Next, we define a map $\psi:O_n\times \mathbb{T}^n\to \pi^{-1}(O_n)$. For $(\tilde{\mathcal{P}}_n,\tilde{\mathcal{Q}}_n,\tilde{\sigma}_n)$ in $\pi^{-1}(O_n)$ and $\tilde{\tau}$, $v_p$, $w_p$ as above, we span the sets
\[
\{\frac{(I+\tilde{\tau}(p)-p)v_p}{||(I+\tilde{\tau}(p)-p)v_p||}:p\in\mathcal{P}_n\}\text{ to }ran(I(\tilde{\mathcal{P}}_n))
\]
and
\[
\{\frac{(I+\tilde{\sigma}_n\tilde{\tau}(p)-\sigma_n(p))w_p}{||(I+\tilde{\sigma}_n\tilde{\tau}(p)-\sigma_n(p))w_p||}:p\in\mathcal{P}_n\}\text{ to }ran(I(\tilde{\mathcal{Q}}_n))
\]
So for $\{\zeta_{\tilde{\tau}(p)}\}_{p\in\mathcal{P}_n}\in\mathbb{T}^n$, we can define an element $\tilde{U}(\tilde{\mathcal{P}}_n,\tilde{\mathcal{Q}}_n)$ in $\mathbb{U}(ran(I(\tilde{\mathcal{P}}_n)),ran(I(\tilde{\mathcal{Q}}_n)))$ by
\[\tilde{U}(\tilde{\mathcal{P}}_n,\tilde{\mathcal{Q}}_n)(\frac{(I+\tilde{\tau}(p)-p)v_p}{||(I+\tilde{\tau}(p)-p)v_p||})
=\zeta_{\tilde{\tau}(p)}(\frac{(I+\tilde{\sigma}_n\tilde{\tau}(p)-\sigma_n(p))w_p}{||(I+\tilde{\sigma}_n\tilde{\tau}(p)-\sigma_n(p))w_p||}),\forall p\in\mathcal{P}_n
\]
s.t.
$$\tilde{U}(\tilde{\mathcal{P}}_n,\tilde{\mathcal{Q}}_n)^*\tilde{U}(\tilde{\mathcal{P}}_n,\tilde{\mathcal{Q}}_n)=I(\tilde{\mathcal{P}}_n)\text{ and } \tilde{U}(\tilde{\mathcal{P}}_n,\tilde{\mathcal{Q}}_n)\tilde{U}(\tilde{\mathcal{P}}_n,\tilde{\mathcal{Q}}_n)^*=I(\widetilde{\mathcal{Q}}_n)$$
So $\tilde{U}(\tilde{\mathcal{P}}_n,\tilde{\mathcal{Q}}_n)$ maps $ran(\tilde{p})$ onto $ran(\tilde{\sigma}_n(\tilde{p}))$, $\forall \tilde{p}\in\tilde{\mathcal{P}}_n$ and hence $$\tilde{U}(\tilde{\mathcal{P}}_n,\tilde{\mathcal{Q}}_n)\tilde{p}\tilde{U}(\tilde{\mathcal{P}}_n,\tilde{\mathcal{Q}}_n)^*=\tilde{\sigma}_n(\tilde{p})$$

Therefore $((\tilde{\mathcal{P}}_n,\tilde{\mathcal{Q}}_n,\tilde{\sigma}_n),\tilde{U}(\tilde{\mathcal{P}}_n,\tilde{\mathcal{Q}}_n))\in \pi^{-1}(O_n)$ and we can define
\[\psi(((\tilde{\mathcal{P}}_n,\tilde{\mathcal{Q}}_n,\tilde{\sigma}_n),\prod_{p\in\mathcal{P}_n}\zeta_{\tilde{\tau}(p)}))=((\tilde{\mathcal{P}}_n,\tilde{\mathcal{Q}}_n,\tilde{\sigma}_n),\tilde{U}(\tilde{\mathcal{P}}_n,\tilde{\mathcal{Q}}_n))\]

Similar with the argument in the proof of Proposition 2.6, $\psi$ is continuous. It is easy to see $\phi$ and $\psi$ are inverses to each other, therefore $\phi$ is a homeomorphism and hence $E_n$ is a fiber bundle.
\end{proof}

\subsection{Back to operators}

For two normal multiplicity-free elements $A$ and $B$ in $\mathbb{K}(C(S^1))$ with the eigenvalue sets $Eig(A(x))=Eig(B(x))\subset \mathbb{C}\setminus\{0\}$, $\forall x\in S^1$. For any $n\in \mathbb{N}$, we will give a sufficient condition for the existence of continuous operator-valued function $U\in I+\mathbb{K}(C(S^1))$ s.t.
\[
||U(x)^*A(x)U(x)-B(x)||<\frac{37}{n}
\]
 and
\[U(x)^*U(x)=I=U(x)U(x)^*,~~\forall x\in S^1
\]
with some assumption on the eigenvalue sets of $A$ and $B$.\

Let $\bar{A},\bar{B}\in \mathbb{K}(C([0,1]))$ defined by
$$\bar{A}(x)=A(e^{2\pi xi})\text{ and }\bar{B}(x)=B(e^{2\pi xi}),~~\forall x\in [0,1]$$

So we have $\bar{A}(0)=\bar{A}(1)$ and $\bar{B}(0)=\bar{B}(1)$. It is easy to see that $\bar{A},\bar{B}\in \mathbb{K}(C([0,1]))$ are still normal multiplicity-free and $Eig(\bar{A}(x))=Eig(\bar{B}(x))\subset\mathbb{C}\setminus\{0\}$, $\forall x\in[0,1]$.\

From now on, we will restrict our attention to the elements in $\mathbb{K}(C(S^1))$ satisfying the following condition.

\begin{definition}
For an element $\bar{A}\in\mathbb{K}(C[0,1])$, we say $\bar{A}$ satisfies condition (1), if for any $x\in [0,1]$, $\forall \lambda_x\in Eig(\bar{A}(x))$,
$\exists$ a continuous function $\lambda(x)\in C[0,1]$ s.t. $\lambda(x)=\lambda_x$ and $\lambda(x)\in Eig(\bar{A}(x))$, $\forall x\in [0,1]$. We say $A\in \mathbb{K}(C(S^1))$ satisfies condition (1), if $\bar{A}$ satisfies condition (1).
\end{definition}

\begin{definition}
For a sequence of continuous matrix-valued functions $\{\bar{A}_n(x)\}_n$, in which $\bar{A}_n\in M_{n\times n}(C[0,1])$, we say $\{\bar{A}_n(x)\}_n$ satisfying condition (2), if $\forall n\in \mathbb{N}$, $\exists F_n\in M_{n\times n}(C[0,1])$ s.t.\\

(I). $F_n(x)\bar{\Lambda}_n(x)F_n(x)^*=\bar{A}_n(x), ~~\forall x\in [0,1]$\\

(II). $\lambda_i^{(n)}(x)=\lambda_i^{(i)}(x),~~\forall i\leq n,~~\forall x\in[0,1]$\\

in which
$$\bar{\Lambda}_n(x)=
\left(
  \begin{array}{cccc}
   \lambda_1^{(n)}(x) & 0 & ... & 0 \\
  0 & \lambda_2^{(n)}(x) & ... & 0 \\
     ... & ... & ... & ... \\
    0 & 0 & 0 & \lambda_n^{(n)}(x) \\
  \end{array}
\right)$$
and $\lambda_i^{(n)}(x)\in C[0,1]$.\\

Let
$$Ac(\{\bar{\Lambda}_n(x)\}_n)=\{\lambda_n^{(n)}(x):n\in\mathbb{N}\},\forall x\in [0,1]$$

We say $\bar{A}\in \mathbb{K}(C[0,1])$ satisfying condition (2), if $\exists$ a sequence $\{\bar{A}_n(x)\}_n$ with condition (2) s.t.
\[
\bar{A}_n\to \bar{A}\text{ in norm as $n\to\infty$ and }Eig(\bar{A}(x))=Ac(\{\bar{\Lambda}_n(x)\}_n),~~\forall x\in [0,1]
\]
We say $A\in \mathbb{K}(C(S^1))$ satisfies condition (2), if $\bar{A}$ satisfies condition (2).
\end{definition}

%For the normal and multiplicity-free elements in $\bar{A}\in \mathbb{K}(C[0,1])$ with $Eig(\bar{A}(x))\subset \mathbb{C}\setminus \{0\}$, we have:
For an element $\bar{A}\in \mathbb{K}(C[0,1])$ satisfying condition (2), let $\lambda_i(x)=\lambda_i^{(i)}(x)$, for any $i\in \mathbb{N}$. Since $Eig(\bar{A}(x))=Ac(\{\bar{\Lambda}_n(x)\}_n)$, for any $\lambda_x\in Eig(\bar{A}(x))$, there exists $i\in \mathbb{N}$ s.t. $\lambda_i(x)=\lambda_x$. So $\bar{A}\in \mathbb{K}(C[0,1])$ satisfies condition (1).\

If $Eig(\bar{A}(x))\subset \mathbb{C}\setminus \{0\}$, $\forall x\in [0,1]$, for an element $\bar{A}\in \mathbb{K}(C[0,1])$ satisfying condition (1), there is a sequence of continuous functions $\{\lambda_i(x)\}_i$ s.t. $\{\lambda_i(x)\}_i=Eig(\bar{A}(x))$, $\forall x\in [0,1]$. Let $A$ be normal and have  multiplicity-free condition i.e.
$$\lambda_i(x)\neq \lambda_j(x),\forall i\neq j\in \mathbb{N}\text{ and }\forall x\in[0,1]$$
Let $res(\bar{A}(x))$ be the resolvent set of $\bar{A}(x)$, so we can choose $\delta_i(x)>0$, $\forall x\in[0,1]$ s.t.
\[
B(\lambda_i(x),\delta_i(x))\bigcap B(\lambda_j(x),\delta_j(x))=\emptyset,~~\forall i\neq j\in \mathbb{N},\forall x\in[0,1]
\]
and $B(\lambda_i(x),\delta_i(x))\setminus \{\lambda_i(x)\}\subset res(\bar{A}(x))$.\

For any fixed $t\in[0,1]$, because $\overline{B(\lambda_i(t),\frac{\delta_i(x)}{2})}\setminus B(\lambda_i(t),\frac{\delta_i(x)}{4})$ is a closed subset and $(\lambda-\bar{A}(t))^{-1}$ exists  $\forall \lambda\in\overline{B(\lambda_i(t),\frac{\delta_i(x)}{2})}\setminus B(\lambda_i(t),\frac{\delta_i(x)}{4})$, we have
\[
\inf\{||\lambda-\bar{A}(t)^{-1}||:\lambda\in\overline{B(\lambda_i(t),\frac{\delta_i(x)}{2})}\setminus B(\lambda_i(t),\frac{\delta_i(x)}{4})\}=r_{i,\delta_i(x)}(t)>0
\]
So
\begin{equation*}\
 (\lambda-A(x))^{-1}=(\lambda-A(t))^{-1}(1-(A(x)-A(t))(\lambda-A(t))^{-1})^{-1}
\end{equation*}
exists for $||\bar{A}(x)-\bar{A}(t)||<r_{i,\delta_i(x)}(t)$ , $\forall \lambda\in\overline{B(\lambda_i(t),\frac{\delta_i(x)}{2})}\setminus B(\lambda_i(t),\frac{\delta_i(x)}{4})$.\

Since $\bar{A}(x)$ is continuous, $(\lambda-A(x))^{-1}$ exists for $x\in(t-\alpha_{i,\delta_i(x)}(t),t+\alpha_{i,\delta_i(x)}(t))$ for some $\alpha_{i,\delta_i(x)}(t)>0$ and $\forall \lambda\in\overline{B(\lambda_i(t),\frac{\delta_i(x)}{2})}\setminus B(\lambda_i(t),\frac{\delta_i(x)}{4})$.
 Therefore
\begin{equation*}
(\overline{B(\lambda_i(t),\frac{\delta_i(x)}{2})}\bigcap Eig(\bar{A}(x))=\{\lambda_i(x)\},\forall x\in(t-\alpha_{i,\delta_i(x)}(t),t+\alpha_{i,\delta_i(x)}(t))
\end{equation*}
and by (4.1) in \cite{Baumgartel} and the continuity of $\bar{A}(t)$, for $t\in [0,1]$, we have
\begin{lemma}
If $Eig(\bar{A}(x))\subset \mathbb{C}\setminus \{0\}$, for any fixed $x\in [0,1]$, let $\lambda_i(x)\in Eig(\bar{A}(x))$ and $\delta_i(x)$ as above, then the Riesz projection
\[
P^x_i(\bar{A}(t))=\frac{1}{2\pi i}\int_{|\lambda-\lambda_i(x)|=\frac{\delta_i(x)}{2}}(\lambda-\bar{A}(t))^{-1}d\lambda
\]
depends continuously on $t$, when $|t-x|<\eta_i(x)$ for some small enough $\eta_i(x)>0$, in which $|\lambda-\lambda_i(x)|=\frac{\delta_i(x)}{2}$ is positive oriented.
\end{lemma}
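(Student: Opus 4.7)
The plan is to derive continuity of $P^x_i(\bar{A}(t))$ from the norm continuity of $\bar{A}$ together with the existence of the resolvent on the integration contour, which has already been set up in the paragraph preceding the lemma. The main ingredient is the resolvent identity
\[
(\lambda-\bar{A}(t))^{-1}-(\lambda-\bar{A}(s))^{-1}=(\lambda-\bar{A}(t))^{-1}(\bar{A}(t)-\bar{A}(s))(\lambda-\bar{A}(s))^{-1},
\]
used uniformly in $\lambda$ on the circle $\gamma_x:=\{\lambda:|\lambda-\lambda_i(x)|=\delta_i(x)/2\}$.

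First I would choose $\eta_i(x)\in(0,\alpha_{i,\delta_i(x)}(x))$ small enough so that for all $t$ in $(x-\eta_i(x),x+\eta_i(x))$ every point of $\gamma_x$ lies in the resolvent set of $\bar{A}(t)$, as already shown in the paragraph before the statement. The next step is to obtain a uniform bound
\[
M:=\sup\bigl\{\|(\lambda-\bar{A}(t))^{-1}\|:\lambda\in\gamma_x,\ |t-x|<\eta_i(x)\bigr\}<\infty.
\]
This uses the Neumann series expansion
\[
(\lambda-\bar{A}(t))^{-1}=(\lambda-\bar{A}(x))^{-1}\sum_{k=0}^{\infty}\bigl[(\bar{A}(x)-\bar{A}(t))(\lambda-\bar{A}(x))^{-1}\bigr]^{k},
\]
which converges uniformly in $\lambda\in\gamma_x$ once $\eta_i(x)$ is small enough that $\|\bar{A}(x)-\bar{A}(t)\|\cdot\sup_{\lambda\in\gamma_x}\|(\lambda-\bar{A}(x))^{-1}\|<1/2$; the required finiteness of the supremum over $\lambda\in\gamma_x$ comes from the fact that $(\lambda-\bar{A}(x))^{-1}$ is norm-continuous in $\lambda$ on the compact set $\gamma_x$.

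With this uniform bound in hand, the resolvent identity gives
\[
\sup_{\lambda\in\gamma_x}\|(\lambda-\bar{A}(t))^{-1}-(\lambda-\bar{A}(s))^{-1}\|\leq M^{2}\,\|\bar{A}(t)-\bar{A}(s)\|
\]
for all $s,t$ in the chosen neighborhood of $x$. Since $\bar{A}$ is norm continuous on $[0,1]$, the map $t\mapsto(\lambda-\bar{A}(t))^{-1}$ is continuous in $B(H)$ uniformly in $\lambda\in\gamma_x$. Consequently
\[
\bigl\|P^x_i(\bar{A}(t))-P^x_i(\bar{A}(s))\bigr\|\leq\frac{1}{2\pi}\cdot 2\pi\cdot\frac{\delta_i(x)}{2}\cdot M^{2}\,\|\bar{A}(t)-\bar{A}(s)\|,
\]
which tends to $0$ as $s\to t$, giving the desired continuity.

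The only real obstacle is establishing the uniform bound $M<\infty$; once that is secured, the rest is just the standard estimate of a Banach-space-valued contour integral. One could also cite directly the result of Baumg\"artel referenced in the excerpt to shortcut this step, but spelling out the Neumann-series argument makes the choice of $\eta_i(x)$ fully explicit, which will likely be useful later when the Riesz projections are used to approximate $\bar{A}$ by finite rank operators.
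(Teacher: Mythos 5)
Your proof is correct and follows the same route the paper implicitly takes: it uses the Neumann-series factorization (which the paper already sets up in the paragraph preceding the lemma) to get a uniform resolvent bound on the circle, and then estimates the contour integral via the resolvent identity; the paper simply outsources this last standard step to Baumg\"artel (4.1), while you spell it out. One cosmetic slip: in your Neumann expansion the bracket should read $(\bar{A}(t)-\bar{A}(x))(\lambda-\bar{A}(x))^{-1}$ (or include alternating signs $(-1)^k$ with your choice of sign), but this does not affect the norm bound $M$ or the rest of the argument.
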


For any fixed $t\in [0,1]$, let $w_i(t)$ be the eigenvector for the eigenvalue $\lambda_i(t)$ of $\bar{A}(t)$, then by the multiplicity-free condition, $\{w_i(t)\}_i$ forms a base for a Hilbert space $H$.\

For any fixed $i$, WOLG, we can assume that $|\lambda_i(t)-\lambda_i(x)|<\frac{\delta_i(x)}{4}$ when $|x-t|<\eta_i(x)$. So, for $t\in (x-\eta_i(x),x+\eta_i(x))$, we have
\begin{multline*}
P^x_i(\bar{A}(t))w_i(t)=\frac{1}{2\pi i}\int_{|\lambda-\lambda_i(x)|=\frac{\delta_i(x)}{2}}(\lambda-\bar{A}(t))^{-1}d\lambda\cdot w_i(t) \\
=\frac{1}{2\pi i}\int_{|\lambda-\lambda_i(x)|=\frac{\delta_i(x)}{2}}(\lambda-\lambda_i(t))^{-1}d\lambda\cdot w_i(t)=\lambda_i(t)w_i(t)
\end{multline*}

\begin{multline*}
P^x_i(\bar{A}(t))w_j(t)=\frac{1}{2\pi i}\int_{|\lambda-\lambda_i(x)|=\frac{\delta_i(x)}{2}}(\lambda-\bar{A}(t))^{-1}d\lambda\cdot w_j(t) \\
=\frac{1}{2\pi i}\int_{|\lambda-\lambda_i(x)|=\frac{\delta_i(x)}{2}}(\lambda-\lambda_j(t))^{-1}d\lambda\cdot w_j(t)=0
\end{multline*}

Since $\{w_i(t)\}_i$ forms a base of $H$, $\forall t\in[0,1]$, then $P_i(\bar{A}_x(t))$ is the continuous $\lambda_i(t)$-eigenprojection-valued function of $\bar{A}(t)$ for $|t-x|<\eta_i(x)$. For $x_1,x_2\in[0,1]$ with $(x_n-\eta_i(x_1),x_1+\eta_i(x_1))\bigcap(x_2-\eta_i(x_2),x_2+\eta_i(x_2))\neq \emptyset$, taking $y\in (x_1-\eta_i(x_1),x_1+\eta_i(x_1))\bigcap(x_2-\eta_i(x_2),x_2+\eta_i(x_2))$, then both $P^{x_1}_i(\bar{A}(y))$ and $P^{x_2}_i(\bar{A}(y))$ are the $\lambda_i(y)$-eigenprojection of $\bar{A}(y)$ and hence $P^{x_1}_i(\bar{A}(y))=P^{x_2}_i(\bar{A}(y))$.\

So for any $i\in\mathbb{N}$, we have a continuous eigenprojection-valued function $P_i(\bar{A}(x))$ s.t.
\[
\bar{A}(x)P_i(\bar{A}(x))=\lambda_i(x)P_i(\bar{A}(x)), ~~~\forall x\in [0,1], \forall i\in\mathbb{N}
\]

Since $P_i(\bar{A}(x))$ defines a 1-dimensional Grassmann bundle over $[0,1]$ and $[0,1]$ is contractible, there exists a continuous section $w_i:[0,1]\to H$ s.t. $||w_i(t)||=1$ and $P_i(\bar{A}(x))w_i(x)=\lambda_i(x)w_i(x)$, $\forall x\in[0,1]$. \

Define a family of unitaries $\{U(x)\}$ by
\[
U(x)e_i=w_i(x), ~~\forall x\in[0,1]
\]
Since $w_i(x)$ is continuous for each $i$, $U(x)$ is a strongly continuous unitary-valued function over $[0,1]$. And so we have $U(x)^*\bar{A}(x)U(x)=\Lambda(x)$, in which
$$\Lambda(x)=\left(
  \begin{array}{ccc}
    \lambda_1(x) & 0 & ... \\
    0 & \lambda_2(x)& ... \\
    ... & ... & ... \\
  \end{array}
\right)$$

Since $\bar{A}$ is the norm-limit of a sequence of matrices over $C[0,1]$, we have the following obvious Lemma.

\begin{lemma}
For the eigenvalue functions $\{\lambda_i(x)\}_{i\in \mathbb{N}}$ and operator $A$ as above, $\forall \epsilon>0$, $\exists N>0$ s.t. $|\lambda_i(x)|<\epsilon$, $||P_n\bar{A}P_n-\bar{A}||<\epsilon$ and $||P_n\bar{B}P_n-\bar{B}||<\epsilon$, $\forall x\in[0,1]$ when $i,n>N$.
\end{lemma}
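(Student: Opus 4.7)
My plan is to reduce the two operator-norm estimates to a single uniform tail bound on the eigenvalue sequence $\{\lambda_i(x)\}$, and then to extract that tail bound from the norm convergence $\bar{A}_n\to\bar{A}$ furnished by condition (2). The reduction is immediate: since $\bar{A}(x)$ commutes with each spectral projection $P_i(\bar{A}(x))$, writing $P_n(x)=\sum_{i=1}^n P_i(\bar{A}(x))$ I get the orthogonal decomposition
\[
\bar{A}(x)-P_n(x)\bar{A}(x)P_n(x)=(I-P_n(x))\bar{A}(x)(I-P_n(x))=\sum_{i>n}\lambda_i(x)P_i(\bar{A}(x)),
\]
whose operator norm equals $\sup_{i>n}|\lambda_i(x)|$; the same identity applies to $\bar{B}$. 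Thus all three assertions reduce to the single claim that for each $\epsilon>0$ there is $N$ with $|\lambda_i(x)|<\epsilon$ whenever $i>N$ and $x\in[0,1]$, together with the analogue for the eigenvalue sequence of $\bar{B}$.

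For the tail bound itself I would invoke condition (2): the matrices $\bar{A}_n\in M_{n\times n}(C[0,1])$ converge uniformly in $x$ to $\bar{A}$, and by part (II) of that definition the operator $\bar{A}_n(x)$ is normal with spectrum $\{\lambda_1(x),\ldots,\lambda_n(x),0\}$. Given $\epsilon>0$, pick $N$ with $\|\bar{A}(x)-\bar{A}_N(x)\|<\epsilon/3$ for all $x$. Since $\bar{A}_N(x)$ has rank at most $N$, the singular-value inequality $s_{N+1}(\bar{A}(x))\le\|\bar{A}(x)-\bar{A}_N(x)\|$ together with the identity $s_k=|\mu_k|$ valid for normal compact operators shows that at most $N$ eigenvalues of $\bar{A}(x)$ can have magnitude $\ge\epsilon/3$, uniformly in $x$. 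To upgrade this magnitude-ordered bound to the enumeration-ordered statement of the lemma, I would argue by contradiction: if $i>N$ and $|\lambda_i(x_0)|\ge\epsilon$ for some $x_0$, then testing $\bar{A}_N(x_0)-\bar{A}(x_0)$ on the unit eigenvector for $\lambda_i(x_0)$ and using normality of $\bar{A}_N(x_0)$ gives $\mathrm{dist}(\lambda_i(x_0),\sigma(\bar{A}_N(x_0)))<\epsilon/3$, so $\lambda_i(x_0)$ must be within $\epsilon/3$ of some $\lambda_j(x_0)$ with $j\le N$. Iterating this estimate with $\|\bar{A}-\bar{A}_N\|_{\infty}$ taken arbitrarily small, and appealing to the pairwise isolation of the eigenvalues of $\bar{A}(x)$ encoded by the Riesz-projection radii $\delta_j(x)$ of the preceding subsection, forces $\lambda_i(x_0)=\lambda_j(x_0)$, contradicting multiplicity-freeness.

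The main obstacle will be precisely this upgrade. Condition (2) specifies the order in which the $\lambda_i$ are added to the spectrum of the approximating matrix, not an order by magnitude, so the min-max bound does not directly control $|\lambda_i|$ for $i>N$ in the given enumeration. Handling this cleanly requires both the Dini-type uniform decay $\sup_{x\in[0,1]}s_{n+1}(\bar{A}(x))\to 0$ (available from compactness of $[0,1]$ and Lipschitz dependence of singular values on the operator) and the uniform spectral separation guaranteed by the explicit choice of the $\delta_j(x)$. The argument applied verbatim to $\bar{B}$ yields the third inequality, completing the lemma.
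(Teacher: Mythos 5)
Your reduction rests on a misreading of what $P_n$ denotes. In the paper $P_n$ is a \emph{fixed} coordinate projection (onto $\operatorname{span}\{e_1,\ldots,e_n\}$ in the diagonalizing basis), not the sum $\sum_{i\le n}P_i(\bar A(x))$ of spectral projections of $\bar A(x)$. This is forced by the line ``WOLG, we can also assume $\|P_n\Lambda P_n-\Lambda\|<\epsilon$'' immediately after the lemma, by the explicit definition $P_{m(n)}e_i=e_i$ for $i\le m(n)$ and $P_{m(n)}e_i=0$ for $i>m(n)$ in the proof of Theorem~3.11, and by the fact that one and the same $P_n$ is applied to both $\bar A$ and $\bar B$, which do not share eigenprojections. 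Since this $P_n$ does not commute with $\bar A(x)$, your orthogonal decomposition fails: $\|P_n\bar A P_n-\bar A\|$ is \emph{not} equal to $\sup_{i>n}|\lambda_i(x)|$, and the two truncation bounds do not reduce to the eigenvalue tail bound. They are in fact the easy part of the lemma: $\bar A\in\mathbb K(C[0,1])$ gives, for any $\epsilon$, an $m\times m$ matrix $M$ over $C[0,1]$ with $\sup_x\|\bar A(x)-M(x)\|<\epsilon/3$; for $n\ge m$ one has $P_nMP_n=M$ and hence $\|P_n\bar A P_n-\bar A\|\le\|P_n(\bar A-M)P_n\|+\|M-\bar A\|<\epsilon$ uniformly in $x$, and likewise for $\bar B$. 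That is precisely the content of the paper's terse ``obvious'' appeal to the norm-limit-of-matrices property.

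The eigenvalue bound is the genuinely nontrivial part, and you correctly flag the magnitude-order vs.\ enumeration-order upgrade as the obstacle, but the iteration you sketch does not close. Fixing $i>N$ and $x_0$ and then shrinking $\|\bar A-\bar A_N\|_\infty$ requires $N\to\infty$, which eventually violates $i>N$; passing to a new witness $i'>N'$ changes $x_0$ as well. Worse, the ``Riesz-projection radii $\delta_j(x)$'' you invoke need not be bounded below in $x$ a priori: lower-boundedness of $x\mapsto\operatorname{dist}(\lambda_j(x),\sigma(\bar A(x))\setminus\{\lambda_j(x)\})$ is essentially of the same depth as the finiteness statement being proved, so ``forces $\lambda_i(x_0)=\lambda_j(x_0)$'' is not justified. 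What is missing is a compactness argument in $x$: suppose infinitely many curves attain $|\lambda_i(x_i)|\ge\epsilon$, pass to $x_{i_k}\to x_*$, pass further so $\lambda_{i_k}(x_{i_k})\to\mu\in\sigma(\bar A(x_*))$ with $|\mu|\ge\epsilon$, identify $\mu=\lambda_m(x_*)$ among the finitely many eigenvalues of $\bar A(x_*)$ of modulus $\ge\epsilon$, and then use Lemma~3.13 at $x_*$ to see that for $k$ large the unique eigenvalue of $\bar A(x_{i_k})$ in a small disc about $\lambda_m(x_*)$ is $\lambda_m(x_{i_k})$, forcing $\lambda_{i_k}(x_{i_k})=\lambda_m(x_{i_k})$ with $i_k\ne m$ and contradicting multiplicity-freeness. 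You name the right tools --- uniform singular-value decay, the Riesz estimates, multiplicity-freeness --- but the step you yourself call ``the main obstacle'' is left unexecuted, and that is a genuine gap.
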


WOLG, we can also assume $||P_n\Lambda P_n-\Lambda||<\epsilon$, if $n>N$. Therefore for any $n>N$ and $\forall x\in[0,1]$,
\begin{multline*}
||P_nU(x)P_n\Lambda(x) P_n U(x)^* P_n-\bar{A}(x)||\\
\leq ||P_nU(x)P_n\Lambda(x) P_n U(x)^* P_n-P_n\bar{A}(x)P_n||+||P_n\bar{A}(x) P_n-\bar{A}(x)||<2\epsilon
\end{multline*}
and hence we have the following.

\begin{theorem}
For a normal and multiplicity-free element in $\bar{A}\in \mathbb{K}(C[0,1])$ with $Eig(\bar{A}(x))\subset \mathbb{C}\setminus \{0\}$, $\bar{A}$ satisfies condition (1) if and only if it satisfies condition (2).
\end{theorem}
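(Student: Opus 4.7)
The plan is to handle the two implications separately, with the forward direction (condition (2) $\Rightarrow$ (1)) being essentially bookkeeping, and the reverse direction (condition (1) $\Rightarrow$ (2)) assembled from the machinery developed in the paragraphs preceding the theorem.

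For (2) $\Rightarrow$ (1): Assume $\bar{A}$ satisfies condition (2) via $\{\bar{A}_n\}$ and $\{\bar{\Lambda}_n\}$. Set $\lambda_i(x) := \lambda_i^{(i)}(x) \in C[0,1]$. Property (II) of condition (2) says $\lambda_i^{(n)}(x) = \lambda_i(x)$ for every $n \geq i$, so the sequence is well defined. By hypothesis $Eig(\bar{A}(x)) = Ac(\{\bar{\Lambda}_n(x)\}_n) = \{\lambda_n(x) : n \in \mathbb{N}\}$, so every eigenvalue at any point lies on one of these continuous curves. This is condition (1). (This half does not use multiplicity-freeness or the nonvanishing condition.)

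For (1) $\Rightarrow$ (2): The discussion preceding the theorem has already built, from condition (1), the continuous eigenvalue functions $\lambda_i(x)$, continuous eigenprojection-valued functions $P_i(\bar{A}(x))$ via Riesz contour integrals, continuous unit eigenvector sections $w_i:[0,1]\to H$ (using that $[0,1]$ is contractible so that the line bundle trivializes), and a strongly continuous unitary $U(x)$ with $U(x)e_i = w_i(x)$ satisfying $U(x)^*\bar{A}(x)U(x)=\Lambda(x)$ with $\Lambda(x)=\mathrm{diag}(\lambda_i(x))$. The computation right after Lemma 3.7 already establishes the norm estimate
$$\|P_nU(x)P_n\Lambda(x)P_nU(x)^*P_n - \bar{A}(x)\| < 2\epsilon$$
uniformly in $x$, for $n$ larger than some $N$ depending on $\epsilon$. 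I would therefore set
$$F_n(x) := P_nU(x)P_n, \qquad \bar{\Lambda}_n(x) := \mathrm{diag}(\lambda_1(x),\ldots,\lambda_n(x)), \qquad \bar{A}_n(x) := F_n(x)\bar{\Lambda}_n(x)F_n(x)^*,$$
each viewed as an element of $M_{n\times n}(C[0,1])$ with respect to the fixed orthonormal basis $\{e_i\}$ implicit in the identification of $\mathbb{K}(C([0,1]))$ with norm limits of matrices over $C([0,1])$. Property (I) of condition (2) is then the definition of $\bar{A}_n$, and property (II) is automatic because $\bar{\Lambda}_n$ restricts $\bar{\Lambda}_m$ to the first $n$ diagonal entries whenever $n \leq m$. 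The displayed estimate gives $\bar{A}_n \to \bar{A}$ in norm, and by construction $Ac(\{\bar{\Lambda}_n(x)\}_n) = \{\lambda_n(x) : n \in \mathbb{N}\} = Eig(\bar{A}(x))$, the last equality being exactly condition (1).

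The main obstacle I anticipate is the subtle point that $U(x)$ is only strongly continuous on $H$, while $F_n$ is required to be a genuine element of $M_{n\times n}(C[0,1])$, i.e.\ to have norm-continuous matrix entries in $x$. One has to verify that the matrix coefficients $\langle U(x)e_j, e_i\rangle = \langle w_j(x), e_i\rangle$ for $i,j \leq n$ are continuous in $x$; this follows because $w_j$ is norm continuous as a map $[0,1]\to H$, and inner product with a fixed vector is continuous. Once that is in hand, every other clause is either a direct reading of the construction or the application of the norm bound already proved in the lemma preceding the theorem.
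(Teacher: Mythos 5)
Your proposal is correct and takes essentially the same route as the paper: the $(2)\Rightarrow(1)$ paragraph is verbatim the paper's, and for $(1)\Rightarrow(2)$ you simply make explicit the choices $F_n(x)=P_nU(x)P_n$, $\bar{\Lambda}_n(x)=\mathrm{diag}(\lambda_1(x),\ldots,\lambda_n(x))$, $\bar{A}_n=F_n\bar{\Lambda}_nF_n^*$ that the paper leaves implicit after its displayed estimate $\|P_nU(x)P_n\Lambda(x)P_nU(x)^*P_n-\bar{A}(x)\|<2\epsilon$, then check clauses (I) and (II) by inspection. Your flagging and resolution of the strong-vs-norm continuity issue (matrix entries $\langle w_j(x),e_i\rangle$ are continuous because the sections $w_j$ are norm-continuous) is correct and is a point the paper does not spell out.
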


In the following, we would give an example with condition (1) and one without condition (1).

\begin{example}
Let $H$ be a Hilbert space spanned by $\{e_i:i\in\mathbb{Z}\}$.\

Let $\lambda_i(x)\in C[0,1]$, $\forall i\in\mathbb{Z}$ defined by
\begin{eqnarray*}
\begin{cases}
\lambda_n(x)=-\frac{1}{2^{n+1}}x+\frac{1}{2^n}~~~\text{ if }n\geq 0 \cr
\lambda_{-1}(x)=(\frac{3}{2}x-\frac{1}{2})e^{2\pi xi} \cr
\lambda_n(x)=-2^nx-2^n~~~\text{ if }n\leq -2 \end{cases}
\end{eqnarray*}

Let
$\Lambda(x)=\left(
  \begin{array}{ccccc}
    ... & ... & ... & ... & ... \\
    ... & \lambda_1(x) & 0 & 0 & ...\\
    ... & 0 & \lambda_0(x) & 0 & ... \\
    ... & 0 & 0 & \lambda_{-1}(x) & ... \\
   ... & ... & ... & ... & ... \\
  \end{array}
\right)$
and
$U=\left(
  \begin{array}{ccccc}
    ... & ... & ... & ... & ... \\
    ... & 0 & 1 & 0 & ...\\
    ... & 0 & 0 & 1 & ... \\
    ... & 0 & 0 & 0 & ... \\
   ... & ... & ... & ... & ... \\
  \end{array}
\right)$

Let $U(x)$ be the continuous path of unitary s.t. $U(0)=I$ and $U(1)=U$, so $U(0)\Lambda(0)U(0)^*=U(1)\Lambda(1)U(1)^*$. Since unitary does not change the spectrum, $U(x)\Lambda(x)U(x)^*=A(e^{2\pi xi})$ is a normal and multiplicity-free element in $\mathbb{K}(C(S^1))$ with $Eig(A(x))\subset \mathbb{C}\setminus \{0\}$ and has the condition (1).
\end{example}

\begin{example}
Let
$$A(1)=\left(
  \begin{array}{ccccc}
    1 & ... & ... & ... & ... \\
    ... & \frac{1}{2}& 0 & 0 & ...\\
    ... & 0 & \frac{1}{4} & 0 & ... \\
    ... & 0 & 0 & \frac{1}{8} & ... \\
   ... & ... & ... & ... & ... \\
  \end{array}
\right)$$

For any $n\in\mathbb{N}$, let
\begin{eqnarray*}
\begin{cases}
U_n(x)=I,\text{ if }x\notin[\frac{3}{2^{n+2}},\frac{1}{2^n}] \cr
U_n(x)=\left(
  \begin{array}{ccccc}
    I_n & ... & ... & ...  \\
    ... & \cos(2^{n+1}\pi (\frac{1}{2^n}-x))& \sin(2^{n+1}\pi (\frac{1}{2^n}-x)) & ... \\
    ... & -\sin(2^{n+1}\pi (\frac{1}{2^n}-x)) & \cos(2^{n+1}\pi (\frac{1}{2^n}-x)) & ...  \\
   ... & ... & ... & I  \\
  \end{array}
\right)\\
\text{ if }x\in[\frac{3}{2^{n+2}},\frac{1}{2^{n}}]
\end{cases}
\end{eqnarray*}
and
$$U_n=U_n(\frac{3}{2^{n+1}})=\left(
  \begin{array}{ccccc}
    I_n & ... & ... & ...  \\
    ... & 0& 1 & ... \\
    ... & -1 & 0 & ...  \\
   ... & ... & ... & I  \\
  \end{array}
\right)$$

For any $n\in\mathbb{N}$, let
\begin{eqnarray*}
\begin{cases}
V_n(x)=I,\text{ if }x\notin[\frac{1}{2^{n+1}},\frac{3}{2^{n+2}}] \cr
V_n(x)=\left(
  \begin{array}{ccccc}
    I_n & ... & ... & ...  \\
    ... & \frac{1}{\sqrt{2}}(4-2^{n+2}x)e^{2\pi (3-2^{n+2}x)i}& 0& ...  \\
    ... &  & \frac{\sqrt{2}}{4-2^{n+2}x}e^{-2\pi (3-2^{n+2}x)i} & ... \\
    ... & ...& ...  & I  \\
  \end{array}
\right)\\
\text{ if }x\in[\frac{1}{2^{n+1}},\frac{3}{2^{n+2}}]
\end{cases}
\end{eqnarray*}
 and
$$V_n=V_n(\frac{1}{2^{n+1}})=\left(
  \begin{array}{ccccc}
    I_n & ... & ... & ...  \\
    ... & \sqrt{2}& 0 & ... \\
    ... & 0 & \frac{1}{\sqrt{2}} & ...  \\
   ... & ... & ... & I  \\
  \end{array}
\right)$$
Let
\begin{eqnarray*}
\begin{cases}
A(x)=U_n(x)V_{n-1}U_{n-1}...V_0U_0A(1)U_0^*V_0^*...U_{n-1}^*V_{n-1}^*U_n(x)^*, \text{ if }x\in [\frac{3}{2^{n+2}},\frac{1}{2^{n}}]\cr
A(x)=V_n(x)U_n...V_0U_0A(1)U_0^*V_0^*...U_n^*V_n(x)^*, \text{ if }x\in[\frac{1}{2^{n+1}},\frac{3}{2^{n+2}}]\cr
A(0)=A(1)
\end{cases}
\end{eqnarray*}

It is easy to see $A(x)$ is normal multiplicity-free and norm continuous at $x=0$. Moreover $A(x)\in \mathbb{K}(C[0,1])$ and $Eig(A(x))\subset \mathbb{C}\setminus \{0\}$, $\forall x\in [0,1]$. So we have a unit-vector-valued function
\begin{eqnarray*}
w_1(x)=\begin{cases}
U_n(x)V_{n-1}U_{n-1}...V_0U_0e_1, \text{ if }x\in [\frac{3}{2^{n+2}},\frac{1}{2^{n}}]\cr
V_n(x)U_n...V_0U_0e_1, \text{ if }x\in[\frac{1}{2^{n+1}},\frac{3}{2^{n+2}}]
\end{cases}
\end{eqnarray*}
which satisfies the condition $A(x)w_1(x)=\lambda_1(x)w_1(x)$, $\forall x\in (0,1]$. Therefore
\begin{eqnarray*}
|\lambda_1(x)|=\begin{cases}
\frac{1}{2^{n}}, \text{ if }x\in [\frac{3}{2^{n+2}},\frac{1}{2^{n}}]\cr
|2x-\frac{1}{2^{n+1}}|, \text{ if }x\in[\frac{1}{2^{n+1}},\frac{3}{2^{n+2}}]
\end{cases}
\end{eqnarray*}

But if $A(x)$ had the condition (1), then $\lambda_1(x)$ could be continuously extended into an element in $C[0,1]$ with value $\lambda_1(0)=0$. It is a contradiction with the fact $0\notin Eig(A(0))$.

\end{example}

So for two normal multiplicity-free $A$ and $B\in\mathbb{K}(C(S^1))$, as in the argument above Lemma 3.7, there exists a norm continuous diagonal operator-valued function $\bar{\Lambda}(x)$ and strongly continuous unitary-valued functions $U_1(x)$ and $U_2(x)$ s.t.
\[
U_1(x)^*\bar{A}(x)U_1(x)=\bar{\Lambda}(x)=U_2(x)^*\bar{B}(x)U_2(x), ~~\forall x\in[0,1]
\]
in which
$$\bar{\Lambda}(x)=\left(
  \begin{array}{cccc}
   \lambda_1(x) & 0 & ... \\
  0 & \lambda_2(x) & ...\\
     ... & ... & ...  \\
   \end{array}
\right)$$

Since $\bar{A}(0)=\bar{A}(1)$ and hence $\{\lambda_i(0)\}_i=\{\lambda_i(1)\}_i$, there exists a permutation $\sigma:\mathbb{N}\to\mathbb{N}$ s.t. $\lambda_i(0)=\lambda_{\sigma(i)}(1)$,$\forall i\in\mathbb{N}$. By the multiplicity-free condition and computation
\begin{eqnarray*}
\begin{cases}
\bar{A}(0)U_1(0)e_i=U_1(0)\bar{\Lambda}(0)e_i=\lambda_i(0)U_1(0)e_i\cr
\bar{A}(1)U_1(1)e_{\sigma(i)}=U_1(1)\bar{\Lambda}(1)e_{\sigma(i)}=\lambda_{\sigma(i)}(1)U_1(1)e_{\sigma(i)}=\lambda_{i}(0)U_1(1)e_{\sigma(i)}
\end{cases}
\end{eqnarray*}
we can get
\begin{equation}
U_1(0)e_i=U_1(1)e_{\sigma(i)},\forall i\in \mathbb{N}\text{ and similarly }U_2(0)e_i=U_2(1)e_{\sigma(i)},\forall i\in \mathbb{N}
\end{equation}

For any fixed $n\in \mathbb{N}$, let $S_x^n=\{i\in \mathbb{N}:|\lambda_i(x)|\geq \frac{1}{n}\}$ and $S_n=\{i\in\mathbb{N}:\max_{x\in[0,1]}|\lambda_i(x)|\geq \frac{1}{n}\}$.\

By Lemma 3.7, $S_n$ is a finite subset of $\mathbb{N}$.
Let $s(n)=\sharp S_n$ and WLOG we can assume $S_n=[1,s(n)]\bigcap\mathbb{N}$.
For $S_x^n$ at $x=0$ and 1, we have $\sigma(S_0^n)=S_1^n$, but $\sigma(S_n)$ may not equal to $S_n$. So we choose a permutation
$$\sigma':S_n\to S_n$$
s.t.
\begin{equation*}
\sigma'|_{S_n\bigcap\sigma^{-1}(S_n)}=\sigma|_{S_n\bigcap\sigma^{-1}(S_n)}\text{ and }\sigma'(S_n\setminus \sigma^{-1}(S_n))=S_n\setminus \sigma(S_n)
\end{equation*}

 In the following, we will construct an element $\bar{A}_n\in \mathbb{K}(C[0,1])$ s.t.
$$||\bar{A}_n-\bar{A}||<\frac{4}{n}\text{ and }\bar{A}_n(0)=\bar{A}_n(1)$$

$\mathbf{Step 1:}$\\

Since $S_n$ is finite, there exists $\alpha(n)>0$ s.t.
$$|\lambda_i(x)-\lambda_i(1)|<\frac{1}{n},\forall x\in [\alpha(n),1],\forall i\in S_n$$

So if $i\notin S_1^n$, $|\lambda_i(1)|<\frac{1}{n}$ and hence
\begin{equation}
|\lambda_i(x)|<\frac{2}{n},\text{ if }i\in S_n\setminus S_1^n \text{ and }\forall x\in[\alpha(n),1]
\end{equation}

We define a sequence of functions $\lambda_i'(x)$ as follows. Let\
\begin{eqnarray*}
\lambda_i'(x)=\begin{cases}
\lambda_i(x),\text{ if } i\in S_n\bigcap\sigma(S_n)\cr
0,\text{ if } i\notin S_n\cr
\begin{cases}
\lambda_i(x),\text{ for } x\in [0,\alpha(n)]\cr
f_i(x),\text{ for } x\in [\alpha(n),1]
\end{cases}
\text{ if } i\in S_n\setminus \sigma(S_n)
\end{cases}
\end{eqnarray*}
in which $f_i(x)$ satisfies
\[
  f_i(x) =
  \begin{cases}
    f_i(\alpha(n))=\lambda_i(\alpha(n))\\
	f_i(1)=\lambda_{\sigma'^{-1}(i)}(0)\\
    0<|f_i(x)|\leq\max\{|\lambda_i(\alpha(n))|,|\lambda_{\sigma'^{-1}(i)}(0)|\}
  \end{cases}
\]
For any $i\notin S_n\bigcap \sigma(S_n)$ and hence $i\notin S_1^n$ and $\sigma'^{-1}(i)\notin S_0^n$, by (3.2), we have
\begin{equation}
\max_{x\in[\alpha(n),1]}\{|\lambda_i'(x)|\}=\max_{x\in[\alpha(n),1]}\{|f_i(x)|\}
\leq\max\{|\lambda_i(\alpha(n))|,|\lambda_{\sigma'^{-1}(i)}(0)|\}\leq \frac{2}{n}
\end{equation}

Again since $S_n$ is finite, by the multiplicity-free condition, we can assume $f_i(x)\neq f_j(x)$ and $f_i(x)\neq 0$, $\forall i\neq j\in S_n$, $\forall x\in[0,1]$. So we have a new continuous diagonal operator-valued function
$$\bar{\Lambda}'(x)=\left(
  \begin{array}{cccc}
   \lambda_1'(x) & 0 & ... \\
  0 & \lambda_2'(x) & ...\\
     ... & ... & ...  \\
   \end{array}
\right)$$
s.t. $\lambda_i'(x)=0$ when $i\notin S_n$ and $\lambda_i'(x)\neq\lambda_j'(x)$ when $i\neq j\in S_n$, $\forall x\in [0,1]$ and
\begin{equation}
\lambda_i'(0)=\lambda_{\sigma'(i)}'(1),\forall i\in S_n
\end{equation}

$\mathbf{Step 2}$:\\

Let $H(n)=span\{e_i:i\in S_n\}$ and $L(n)=span\{e_i:i\in S_n\bigcup\sigma(S_n)\}$ and let $V_1$ be a partial isometry defined by
$$
 \begin{cases}
    V_1e_i=e_i, \text{ if }i\in S_n\bigcap\sigma(S_n)=\sigma'(S_n\bigcap\sigma^{-1}(S_n))\\
	V_1e_i=e_{\sigma(\sigma'^{-1}(i))},\text{ if }i\in S_n\setminus \sigma(S_n)~i.e.~V_1e_{\sigma'(i)}=e_{\sigma(i)}, \text{ if }\sigma'(i)\in S_n\setminus \sigma(S_n)\\
    V_1e_i=0, \text{ if }i\notin S_n
  \end{cases}
$$
and hence
$$
 \begin{cases}
    V_1^*e_i=e_i, \text{ if }i\in S_n\bigcap\sigma(S_n)=\sigma'(S_n\bigcap\sigma^{-1}(S_n))\\
	V_1^*e_i=e_{\sigma'(\sigma^{-1}(i))},\text{ if }i\in \sigma(S_n)\setminus S_n~i.e.~V_1^*e_{\sigma(i)}=e_{\sigma'(i)}, \text{ if }\sigma'(i)\in S_n\setminus \sigma(S_n)\\
    V_1^*e_i=0, \text{ if }i\notin \sigma(S_n)
  \end{cases}
$$
Let
$$
V_1(x)=\begin{cases}
    P_{H(n)},\text{ if }~~x\in [0,\alpha(n)]\\
	W_1(x), \text{ if }~~x\in (\alpha(n),1)\\
    V_1, \text{ if }~~x=1
  \end{cases}
$$
in which $W_1(x)$ is a continuous path of rank-$s(n)$ partial isometries connecting $P_{H(n)}$ and $V_1$ s.t. $W_1(x)\in P_{L(n)}B(H)P_{L(n)}$ and $W_1(x)e_i=e_i$, $\forall i\in S_n\bigcap \sigma(S_n)$. Therefore $V_1(x)e_i=e_i$, $\forall i\in S_n\bigcap \sigma(S_n)$ and
\begin{equation*}
V_1(x)\in P_{L(n)}B(H)P_{L(n)}
\end{equation*}

Since $V_1(x)^*e_i\in ran\{e_i:i\notin S_n\bigcap\sigma(S_n)\}$ , $\forall i\notin S_n\bigcap\sigma(S_n)$, by (3.3)
\begin{multline}
\max_{x\in[\alpha(n),1]}\{||V_1(x)\bar{\Lambda}'(x)V_1^*(x)e_i||:i\notin S_n\bigcap \sigma(S_n)\}\\
\leq\max_{x\in[\alpha(n),1]}\{|\lambda'_i(x)|:i\notin S_n\bigcap \sigma(S_n)\}\leq \frac{2}{n}
\end{multline}

By the definition of $V_1(x)$, we have

$
\begin{cases}
    U_1(0)V_1(0)\bar{\Lambda}'(0)V_1(0)^*U_1(0)^*U_1(0)e_i=\lambda'_i(0)U_1(0)e_i,\forall i\in S_n\cr
    U_1(0)V_1(0)\bar{\Lambda}'(0)V_1(0)^*U_1(0)^*U_1(0)e_i=0,\forall i\notin S_n
  \end{cases}
$

and by (3.1),(3.4) and the fact $\sigma(i)=\sigma'(i),\forall i\in S_n\bigcap \sigma(S_n)$
\begin{align*}
&U_1(1)V_1(1)\bar{\Lambda}'(1)V_1(1)^*U_1(1)^*U_1(0)e_i\\
&=U_1(1)V_1(1)\bar{\Lambda}'(1)V_1(1)^*e_{\sigma(i)}\\
&=U_1(1)V_1(1)\bar{\Lambda}'(1)e_{\sigma'(i)}\\
&=U_1(1)V_1(1)\lambda'_{\sigma'(i)}(1)e_{\sigma'(i)}\\
&=U_1(1)\lambda'_{\sigma'(i)}(1)e_{\sigma(i)}\\
&=\lambda'_{i}(0)U_1(0)e_i,~~~\forall i\in S_n
\end{align*}
and
\[
U_1(1)V_1(1)\bar{\Lambda}'(1)V_1(1)^*U_1(1)^*U_1(0)e_i=0,\forall i\notin S_n
\]
Let $\bar{A}_n(x)=U_1(x)V_1(x)\bar{\Lambda}'(x)V_1(x)^*U_1(x)^*\in \mathbb{K}(C[0,1])$ and we have $\bar{A}_n(0)=\bar{A}_n(1),\forall i\in S_n$.\\

$\mathbf{Step 3}$:\\

By the definition of $V_1(x)$ and $S_n,S_n^1,S_n^0$ and (3.2),(3.4),(3.5), we have the following estimations:\

If $i\notin S_n\bigcup\sigma(S_n)$, $||V_1(x)\bar{\Lambda}'(x)V_1(x)^*e_i-\bar{\Lambda}(x)e_i||=||\bar{\Lambda}(x)e_i||=|\lambda_i(x)|<\frac{1}{n}$.\

$
\text{If }i\in \sigma(S_n)\setminus S_n\begin{cases}
    ||V_1(x)\bar{\Lambda}'(x)V_1(x)^*e_i-\bar{\Lambda}(x)e_i||
    \leq||\bar{\Lambda}(x)e_i||=|\lambda_i(x)|<\frac{1}{n}\cr
    \text{ when } x\in [0,\alpha(n)] \cr
||V_1(x)\bar{\Lambda}'(x)V_1(x)^*e_i-\bar{\Lambda}(x)e_i||\cr
\leq||V_1(x)\bar{\Lambda}'(x)V_1(x)^*e_i||+|\lambda_i(x)|<\frac{2}{n}+\frac{2}{n}=\frac{4}{n}\cr
\text{ when }x\in [\alpha(n),1]
  \end{cases}
$\

$
\text{If }i\in S_n\bigcap\sigma(S_n),~~||V_1(x)\bar{\Lambda}'(x)V_1(x)^*e_i-\bar{\Lambda}(x)e_i||
    \leq|\lambda_i'(x)-\lambda_i(x)|=0
$\

$
\text{If }i\in S_n\setminus \sigma(S_n)\begin{cases}
    ||V_1(x)\bar{\Lambda}'(x)V_1(x)^*e_i-\bar{\Lambda}(x)e_i||\leq|\lambda_i'(x)-\lambda_i(x)|=0\cr
    \text{when } x\in [0,\alpha(n)]\cr
||V_1(x)\bar{\Lambda}'(x)V_1(x)^*e_i-\bar{\Lambda}(x)e_i||\cr
    \leq||V_1(x)\bar{\Lambda}'(x)V_1(x)^*e_i||+|\lambda_i(x)|\leq \frac{2}{n}+\frac{2}{n}=\frac{4}{n}\cr
    \text{when }x\in [\alpha(n),1]
  \end{cases}
$

Therefore we have
$$||V_1(x)\bar{\Lambda}'(x)V_1(x)^*-\bar{\Lambda}(x)||<\frac{4}{n}$$
and hence
$$||U_1(x)V_1(x)\bar{\Lambda}'(x)V_1(x)^*U_1(x)^*-\bar{A}(x)||<\frac{4}{n}$$
So for $\bar{A}_n(x)=U_1(x)V_1(x)\bar{\Lambda}'(x)V_1(x)^*U_1(x)^*$,
\begin{equation}
||\bar{A}_n(x)-\bar{A}(x)||<\frac{4}{n},~~\forall x\in[0,1]
\end{equation}

By similar argument, for $U_2(x)\bar{\Lambda}(x)U_2(x)^*=\bar{B}(x)$, $\forall x\in[0,1]$, we can find a continuous family of partial isometries $V_2(x)$, $x\in[0,1]$ s.t.
$$\bar{B}_n=U_2(x)V_2(x)\bar{\Lambda}'(x)V_2(x)^*U_2(x)^*,~~~~||\bar{B}_n-\bar{B}||<\frac{4}{n}\text{ and }\bar{B}_n(0)=\bar{B}_n(1)$$

Since, by the definition of $V_1(x)$, $V_1(x)V_1(x)e_i=e_i$, $\forall i\in S_n$, then\

$\begin{cases}
  \bar{\Lambda}'(x)e_i=\lambda_i(x)e_i,~\forall i\in S_n\cr
  \bar{\Lambda}'(x)e_i=0,~\forall i\notin S_n
  \end{cases}$\
  
implies\

$\begin{cases}
  V_1(x)\bar{\Lambda}'(x)V_1(x)^* V_1(x)e_i=\lambda_i(x) V_1(x)e_i,~\forall i\in S_n\cr
  V_1(x)\bar{\Lambda}'(x)V_1(x)^*V_1(x)e_i=0,~\forall i\notin S_n
  \end{cases}$\

And we have
\begin{align*}
&\{\lambda'_i(x)\}_{i\in S_n}=Eig^{\neq 0}(\bar{\Lambda}'(x))=Eig^{\neq 0}(V_1(x)\bar{\Lambda}'(x)V_1(x)^*)\\
&=Eig^{\neq 0}(U_1(x)V_1(x)\bar{\Lambda}'(x)V_1(x)^*U_1(x)^*),~~\forall i\in S_n
\end{align*}

Because conjugation by unitaries keeps eigenvalues, we have
\begin{align*}
&\{\lambda'_i(x)\}_{i\in S_n}=Eig^{\neq 0}(\bar{A}_n(x))=\{\lambda\in Eig(\bar{A}_n(x)):\lambda\neq 0\}\\
&=Eig^{\neq 0}(\bar{B}_n(x))=\{\lambda\in Eig(\bar{B}_n(x)):\lambda\neq 0\}
\end{align*}

Since the nonzero eigenvalues vary continuously as functions, the eigenprojections, corresponding to nonzero eigenvalues, $\{\bar{p}_i(x)\}_{i\in S_n}$ of $\bar{A}_n$ and $\{\bar{q}_i(x)\}_{i\in S_n}$ of $\bar{B}_n$ also vary continuously, in which $\{\bar{p}_i(0)\}_{i\in S_n}=\{\bar{p}_i(1)\}_{i\in S_n}$ and $\{\bar{q}_i(0)\}_{i\in S_n}=\{\bar{q}_i(1)\}_{i\in S_n}$. So we have
\[
\bar{P}_{S_n}(0)\doteq \sum_{i\in S_n}\bar{p}_i(0)=\sum_{i\in S_n}\bar{p}_i(1)\doteq\bar{P}_{S_n}(1)
\]
\[
\bar{Q}_{S_n}(0)\doteq \sum_{i\in S_n}\bar{q}_i(0)=\sum_{i\in S_n}\bar{q}_i(1)\doteq\bar{Q}_{S_n}(1)
\]

Since $\bar{A}_n(0)=\bar{A}_n(1)$ and $\bar{B}_n(0)=\bar{B}_n(1)$, then we have $A_n$, $B_n\in\mathbb{K}(C(S^1))$ defined by $A_n(e^{2\pi xi})=\bar{A}_n(x)$ and $B_n(e^{2\pi xi})=\bar{B}_n(x)$, $\forall x\in [0,1]$. By (3.6) and its analogue for $B$, we have
\begin{equation}
||A-A_n||<\frac{4}{n}\text{ and }||B-B_n||<\frac{4}{n}
\end{equation}

Since $\{\lambda_i'(0)\}_{i\in S_n}=\{\lambda_i'(1)\}_{i\in S_n}$, we have
$$Eig^{\neq0}(B_n(e^{2\pi xi}))=Eig^{\neq0}(A_n(e^{2\pi xi}))=\{\lambda_i'(x)\}_{i\in S_n},~\forall x\in [0,1]$$
and
\begin{equation}
\sharp Eig^{\neq0}(A_n(t))=\sharp Eig^{\neq0}(B_n(t))=s(n),~\forall t\in S^1
\end{equation}

Let
\[
\{p_i(e^{2\pi xi})\}_{i\in S_n}=\{\bar{p}_i(x)\}_{i\in S_n}\text{ and }\{q_i(e^{2\pi xi})\}_{i\in S_n}=\{\bar{q}_i(x)\}_{i\in S_n},~~\forall x\in[0,1]
\]
and
\[
P_{S_n}(t)=\sum_{i\in S_n}p_i(t) \text{ and }Q_{S_n}(t)=\sum_{i\in S_n}q_i(t),~~\forall t\in S^1
\]

So, $\forall t\in S^1$, $\{p_i(t)\}_{i\in S_n}$, $\{q_i(t)\}_{i\in S_n}$ are the eigenprojection sets, corresponding to nonzero eigenvalues, of $A_n$ and $B_n$, both of which, vary continuously as a set. By the same reason, $P_{S_n}(t)$ and $Q_{S_n}(t)$ are continuous rank-$s(n)$ projection-valued operators for $t\in S^1$. Similar with the argument above Theorem 2.7, there is a continuous map $\Phi^{s(n)}_{A_n,B_n}$ from $S^1$ to the base $C_{s(n)}$ of fiber bundle $E$.

\begin{theorem}
Let $A,B$ be two normal multiplicity-free elements in $\mathbb{K}(C(S^1))$ with $Eig(A(t))=Eig(B(t))\subset \mathbb{C}\setminus \{0\}$, $\forall t\in S^1$, which satisfy the condition (1). If the map $\Phi_{A_n,B_n}^{s(n)}:S^1\to C_{s(n)}$ lifts to a continuous map $\tilde{\Phi}_{A_n,B_n}^{s(n)}:S^1\to E_{s(n)}$, then  there exists an operator $U\in I+\mathbb{K}(C(S^1))$ s.t.
$$||U(t)A(t)U(t)^*-B(t)||<\frac{37}{n},\forall t\in S^1\text{ and }U^*U=I=UU^*$$
\end{theorem}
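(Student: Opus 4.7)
My plan is to pass through the finite-rank approximants $A_n$ and $B_n$ built in the preceding discussion, using the lift to conjugate $A_n$ onto $B_n$ exactly, and then bound the overall error by the triangle inequality together with the estimates (3.6) and (3.7). The main obstacle is Step 2 below: constructing the global unitary $U$ so that it agrees with the lifted unitary on $\operatorname{ran}(P_{S_n}(t))$ while still satisfying $U-I\in\mathbb{K}(C(S^1))$.

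\emph{Step 1 (Harvest the lift).} From the continuous lift $\tilde\Phi^{s(n)}_{A_n,B_n}:S^1\to E_{s(n)}$ one extracts at every $t\in S^1$ a unitary $U_1(t)\in\mathbb{U}(\operatorname{ran}(P_{S_n}(t)),\operatorname{ran}(Q_{S_n}(t)))$ with $U_1(t)p_i(t)U_1(t)^*=q_i(t)$ for each $i\in S_n$, varying continuously in $t$ by the fiber bundle structure of Proposition 3.3. Because the bijection $\sigma_n$ in $\Phi^{s(n)}_{A_n,B_n}$ pairs $p_i(t)$ with $q_i(t)$ through the common nonzero eigenvalue $\lambda_i'(t)$, and since $A_n(t),B_n(t)$ vanish on $\operatorname{ran}(I-P_{S_n}(t))$ and $\operatorname{ran}(I-Q_{S_n}(t))$ respectively, this gives $U_1(t)A_n(t)U_1(t)^*=B_n(t)$ exactly, where $U_1(t)$ is understood as a partial isometry extended by zero off its domain.

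\emph{Step 2 (Extend to a unitary with $U-I$ compact).} The next step is to extend $U_1(t)$ to a genuine unitary $U(t)\in\mathbb{U}(H)$, continuous in $t\in S^1$, with $U-I\in\mathbb{K}(C(S^1))$. Using the continuity of the rank-$s(n)$ projections $P_{S_n}(t)$ and $Q_{S_n}(t)$ together with the triviality of finite-rank complex vector bundles over $S^1$ (equivalently, $K^0(S^1)\cong\mathbb{Z}$ classifies such bundles purely by rank), I would continuously select a finite-rank projection $R(t)$ of constant rank at least $2s(n)$ whose range contains both $\operatorname{ran}(P_{S_n}(t))$ and $\operatorname{ran}(Q_{S_n}(t))$. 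On $\operatorname{ran}(R(t))$ I would then choose a unitary $W_R(t)$ restricting to $U_1(t)$ on $\operatorname{ran}(P_{S_n}(t))$ and mapping $\operatorname{ran}(R(t))\cap\operatorname{ran}(I-P_{S_n}(t))$ onto $\operatorname{ran}(R(t))\cap\operatorname{ran}(I-Q_{S_n}(t))$. Setting $U(t):=W_R(t)+(I-R(t))$ yields a continuous unitary on $H$ whose difference with $I$ has rank at most that of $R(t)$, so $U\in I+\mathbb{K}(C(S^1))$. By construction $U(t)|_{\operatorname{ran}(P_{S_n}(t))}=U_1(t)$, hence $U(t)A_n(t)U(t)^*=U_1(t)A_n(t)U_1(t)^*=B_n(t)$.

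\emph{Step 3 (Triangle inequality).} Combining $UA_nU^*=B_n$ with the bounds (3.6) and (3.7), namely $\|A-A_n\|<4/n$ and $\|B-B_n\|<4/n$, the triangle inequality gives
\[
\|U(t)A(t)U(t)^*-B(t)\|\le\|A-A_n\|+\|UA_nU^*-B_n\|+\|B_n-B\|<\tfrac{8}{n},
\]
which sits comfortably beneath the stated $37/n$. The additional slack in $37/n$ presumably absorbs auxiliary errors arising in the explicit realization of $R(t)$ and $W_R(t)$ (for instance if one must slightly perturb $U_1$ to achieve the continuous finite-rank extension).

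\emph{Main obstacle.} The technical crux is Step 2: arranging $R(t)$ and $W_R(t)$ so that $U-I$ genuinely lies in $\mathbb{K}(C(S^1))$ and $U(t)$ still restricts to $U_1(t)$ on $\operatorname{ran}(P_{S_n}(t))$. The triviality of finite-rank vector bundles over $S^1$ is the decisive geometric input here and is the fundamental reason the theorem is restricted to $X=S^1$: on higher-dimensional bases, $K$-theoretic obstructions to such continuous unitary extensions generally appear.
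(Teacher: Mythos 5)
Your Step 2 contains the genuine gap. You want a norm-continuous, constant-rank, finite-rank projection $R(t)$ on $S^1$ whose range \emph{exactly} contains both $\operatorname{ran}(P_{S_n}(t))$ and $\operatorname{ran}(Q_{S_n}(t))$, and you then want to extend $U_1(t)$ to a unitary of $\operatorname{ran}(R(t))$. But the natural candidate, the range projection of $P_{S_n}(t)+Q_{S_n}(t)$, has rank $2s(n)-\dim\bigl(\operatorname{ran}(P_{S_n}(t))\cap\operatorname{ran}(Q_{S_n}(t))\bigr)$, which can jump as $t$ varies, so that range projection is discontinuous; and it is not clear how to pad it out to constant rank continuously precisely because the amount of padding needed jumps. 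The appeal to triviality of complex vector bundles over $S^1$ does not address this: triviality tells you something about a bundle once you \emph{have} a continuous constant-rank subbundle, whereas the problem here is to produce one that contains two given subbundles that may intersect in a $t$-dependent way. Nothing in the proposal resolves this, and in infinite dimensions one cannot simply take a fixed finite-rank projection containing all the ranges at once, since norm-continuous families of finite-rank projections on a compact space need not have all their ranges inside a single finite-dimensional subspace. Because the construction of $R(t)$ (and the continuous extension $W_R(t)$ of $U_1(t)$ on it) is not established, the clean $8/n$ estimate in Step~3 is unsupported.

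The paper sidesteps this entirely. It uses the \emph{constant} finite-rank projections $P_{m(n)}$ (onto the first $m(n)$ basis vectors); by compactness these only \emph{approximately} contain $\operatorname{ran}(P_{S_n}(t))$ and $\operatorname{ran}(Q_{S_n}(t))$, which is all one can arrange with a constant cutoff. Consequently $U'_{m(n)}(t)=P_{m(n)}W_{s(n)}(t)P_{m(n)}$ is only an approximate partial isometry (but still a contraction), and the paper converts it to a genuine unitary via the $2\times2$ unitary dilation
\[
U''_{2m(n)}(t)=\begin{pmatrix}U'_{m(n)}(t) & (I-U'_{m(n)}(t)U'_{m(n)}(t)^*)^{1/2}\\ -(I-U'_{m(n)}(t)^*U'_{m(n)}(t))^{1/2} & U'_{m(n)}(t)^*\end{pmatrix},
\]
padded by $I$ to get $U\in I+\mathbb{K}(C(S^1))$. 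The accumulated truncation and square-root errors (the system (3.9) and its consequences) are what produce the constant $37/n$; it is not, as you speculate, slack absorbed by ``slightly perturbing $U_1$.'' So you have correctly identified Step~2 as the crux, but the method you sketch there is unjustified, and the route the paper actually takes is quite different and deliberately avoids the exact-containment problem you ran into.
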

\begin{proof}
Suppose $\pi \tilde{\Phi}_{A_n,B_n}^{s(n)}=\Phi_{A_n,B_n}^{s(n)}$ for some continuous map $\tilde{\Phi}_{A_n,B_n}^{s(n)}:S^1\to E_{s(n)}$. So for any $ t\in S^1$,
$$\tilde{\Phi}_{A_n,B_n}^{s(n)}(t)=(\Phi_{A_n,B_n}^{s(n)}(t),W_{s(n)}(t))$$
in which $W_{s(n)}(t)$ is a norm continuous rank-$s(n)$ isometry on $S^1$. Similar with the argument in the proof of Theorem 2.7, $\forall t\in S^1$
\begin{equation*}
 \begin{cases}
W_{s(n)}^*(t)W_{s(n)}(t)=P_{S_n}(t)\\
W_{s(n)}(t)W_{s(n)}^*(t)=Q_{S_n}(t)\\
W_{s(n)}(t)A_n(t)W_{s(n)}^*(t)=B_n(t)
  \end{cases}
\end{equation*}
and hence
\begin{equation*}
 \begin{cases}
1-W_{s(n)}^*(t)W_{s(n)}(t)=(1-W_{s(n)}^*(t)W_{s(n)}(t))^{\frac{1}{2}}\\
W_{s(n)}(t)A_n(t)=B_n(t)W_{s(n)}(t)\\
A_n(t)W_{s(n)}^*(t)=W_{s(n)}^*(t)B_n(t)
 \end{cases}
\end{equation*}
By a standard argument on compact space, we have $\forall n\in \mathbb{N}$, $\exists m(n)\in \mathbb{N}$ with $P_{m(n)}e_i=0,~\forall i\geq m(n)+1$ and $P_{m(n)}e_i=e_i,~\forall i\leq m(n)$ s.t. $\forall t\in S^1$
\begin{equation*}
 \begin{cases}
||P_{m(n)}W_{s(n)}(t)P_{m(n)}A_n(t)P_{m(n)}W_{s(n)^*}(t)P_{m(n)}-P_{m(n)}B_n(t)P_{m(n)}||<\frac{1}{n}\\
||P_{m(n)}A_n(t)P_{m(n)}-A_n(t)||<\frac{1}{n}\\
||P_{m(n)}B_n(t)P_{m(n)}-B_n(t)||<\frac{1}{n}\\
||(1-P_{m(n)}W_{s(n)}^*(t)P_{m(n)}W_{s(n)}(t)P_{m(n)})\\
-(1-P_{m(n)}W_{s(n)}^*(t)P_{m(n)}W_{s(n)}(t)P_{m(n)})^{\frac{1}{2}}||<\frac{1}{n(||A||+||B||)}\\
||P_{m(n)}W_{s(n)}(t)P_{m(n)}A_n(t)P_{m(n)}\\
-P_{m(n)}W_{s(n)}(t)P_{m(n)}A_n(t)P_{m(n)}W_{s(n)}^*(t)P_{m(n)}W_{s(n)}(t)P_{m(n)}||<\frac{1}{n}\\
||P_{m(n)}A_n(t)P_{m(n)}W_{s(n)}^*(t)P_{m(n)}\\
-P_{m(n)}W_{s(n)}^*(t)P_{m(n)}W_{s(n)}(t)P_{m(n)}A_n(t)P_{m(n)}W_{s(n)}^*(t)P_{m(n)}||<\frac{1}{n}
 \end{cases}
\end{equation*}
and hence
\begin{equation}
 \begin{cases}
||P_{m(n)}W_{s(n)}(t)P_{m(n)}A(t)P_{m(n)}W_{s(n)^*}(t)P_{m(n)}-P_{m(n)}B(t)P_{m(n)}||<\frac{9}{n}\\
||P_{m(n)}A(t)P_{m(n)}-A(t)||<\frac{9}{n}\\
||P_{m(n)}B(t)P_{m(n)}-B(t)||<\frac{9}{n}\\
||(1-P_{m(n)}W_{s(n)}^*(t)P_{m(n)}W_{s(n)}(t)P_{m(n)})\\
-(1-P_{m(n)}W_{s(n)}^*(t)P_{m(n)}W_{s(n)}(t)P_{m(n)})^{\frac{1}{2}}||<\frac{1}{n(||A||+||B||)}\\
||P_{m(n)}W_{s(n)}(t)P_{m(n)}A(t)P_{m(n)}\\
-P_{m(n)}W_{s(n)}(t)P_{m(n)}A(t)P_{m(n)}W_{s(n)}^*(t)P_{m(n)}W_{s(n)}(t)P_{m(n)}||<\frac{9}{n}\\
||P_{m(n)}A(t)P_{m(n)}W_{s(n)}^*(t)P_{m(n)}\\
-P_{m(n)}W_{s(n)}^*(t)P_{m(n)}W_{s(n)}(t)P_{m(n)}A(t)P_{m(n)}W_{s(n)}^*(t)P_{m(n)}||<\frac{9}{n}
 \end{cases}
\end{equation}

Let $U_{m(n)}'(t)=P_{m(n)}W_{s(n)}(t)P_{m(n)}$ for any $t\in S^1$, the last 3 inequalities in (3.9) are replaced by
\begin{equation*}
 \begin{cases}
||(I_{m(n)}-U_{m(n)}'^*(t)U_{m(n)}'(t))-(I_{m(n)}-U_{m(n)}'^*(t)U_{m(n)}'(t))^{\frac{1}{2}}||<\frac{1}{n(||A||+||B||)}\\
||U_{m(n)}'(t)P_{m(n)}A(t)P_{m(n)}(I_{m(n)}-U_{m(n)}'^*(t)U_{m(n)}'(t))||<\frac{9}{n}\\
||(I_{m(n)}-U_{m(n)}'^*(t)U_{m(n)}'(t))P_{m(n)}A(t)P_{m(n)}U_{m(n)}'(t)||<\frac{9}{n}
 \end{cases}
\end{equation*}
which induces $\forall t\in S^1$
\begin{equation*}
 \begin{cases}
||U_{m(n)}'(t)P_{m(n)}A(t)P_{m(n)}(I_{m(n)}-U_{m(n)}'^*(t)U_{m(n)}'(t))^{\frac{1}{2}}||<\frac{10}{n}\\
||(I_{m(n)}-U_{m(n)}'^*(t)U_{m(n)}'(t))^{\frac{1}{2}}P_{m(n)}A(t)P_{m(n)}U_{m(n)}'(t)||<\frac{10}{n}
 \end{cases}
\end{equation*}
By the exercise 9.3 in \cite{Rordam},
$$U_{2m(n)}''(t)=\left(
  \begin{array}{cc}
    U'_{m(n)}(t) & (I_{m(n)}-U'_{m(n)}(t)U'_{m(n)}(t)^*)^{\frac{1}{2}} \\
    -(I_{m(n)}-U'_{m(n)}(t)^*U'_{m(n)}(t))^{\frac{1}{2}} & U'_{m(n)}(t)^* \\
  \end{array}
\right)$$
is a unitary for any $t\in S^1$ and then
\begin{align*}
&||U_{2m(n)}''(t)P_{m(n)}A(t)P_{m(n)}U_{2m(n)}''^*(t)-P_{m(n)}B(t)P_{m(n)}||\\
&\leq \frac{10}{n}+||P_{m(n)}U_{2m(n)}''(t)P_{m(n)}A(t)P_{m(n)}U_{2m(n)}''^*(t)P_{m(n)}-P_{m(n)}B(t)P_{m(n)}||\\
&=\frac{10}{n}+||U_{m(n)}'(t)A(t)U_{m(n)}'^*(t)-P_{m(n)}B(t)P_{m(n)}||\leq \frac{19}{n}
\end{align*}
Let
$$U(t)=\left(
  \begin{array}{cc}
    U'_{2m(n)}(t) & 0 \\
    0 & I \\
  \end{array}
\right)\in I+\mathbb{K}(C(S^1))$$
and by the second and third inequalities in  (3.9), we have, for any $t\in S^1$
\begin{align*}
&||U(t)A(t)U^*(t)-B(t)||\\
&\leq ||U(t)P_{m(n)}A(t)P_{m(n)}U^*(t)-P_{m(n)}B(t)P_{m(n)}||+\frac{18}{n}\\
&=||U_{2m(n)}''(t)P_{m(n)}A(t)P_{m(n)}U_{2m(n)}''^*(t)-P_{m(n)}B(t)P_{m(n)}||+\frac{18}{n}\leq \frac{37}{n}
\end{align*}
\end{proof}

By the same argument in section 3.2 of \cite{Park}, let $\Phi_{A_n,B_n}^*E_{s(n)}$ be the pullback bundle of $\Phi_{A_n,B_n}:S^1\to E_{s(n)}$ over $S^1$. Because the fibers of  $E_{s(n)}$ are homeomorphic to $\mathbb{T}^{s(n)}$, so are the fibers $F_x$ of $\Phi_{A_n,B_n}^*E_{s(n)}$ and $\pi_1(F_x)\cong \mathbb{Z}^{s(n)}$. Therefore we have a bundle of groups $\pi_1(F_x)$, which would be denoted as $\Pi_{A_n,B_n}$. As $S^1$ can be treated as a cell complex with no cells of dimension greater than 1, similar with the argument in section 3.2 of \cite{Park}, we have:

\begin{corollary}
Let $A,B$ be two normal multiplicity-free elements in $\mathbb{K}(C(S^1))$ with $Eig(A(t))=Eig(B(t))\subset \mathbb{C}\setminus \{0\}$, $\forall t\in S^1$, if $A,B$ satisfy the condition (1), then $A$ and $B$ are approximately unitarily equivalent.
\end{corollary}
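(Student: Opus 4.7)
The plan is to invoke Theorem 3.9 at every level $n \in \mathbb{N}$ and then let $n \to \infty$. Concretely, I will argue that the lifting hypothesis of Theorem 3.9 (namely, that $\Phi_{A_n,B_n}^{s(n)} : S^1 \to C_{s(n)}$ admits a continuous lift into $E_{s(n)}$) is automatic because the base space is one-dimensional, and then the bound $\|U_n A U_n^* - B\| < 37/n$ will directly give approximate unitary equivalence.

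First I would reinterpret the lifting condition as the existence of a continuous section. Pulling the fiber bundle $E_{s(n)} \to C_{s(n)}$ of Proposition 3.3 back along $\Phi_{A_n,B_n}^{s(n)}$ yields a fiber bundle $(\Phi_{A_n,B_n}^{s(n)})^{*} E_{s(n)} \to S^1$ with fiber $\mathbb{T}^{s(n)}$, and a continuous lift of $\Phi_{A_n,B_n}^{s(n)}$ corresponds exactly to a continuous global section of this pullback. Next I would apply classical skeleton-wise obstruction theory, equipping $S^1$ with its usual CW structure consisting of one $0$-cell and one $1$-cell. The obstruction to extending a partial section across the $k$-skeleton lives in $H^{k}(S^1; \pi_{k-1}(\mathbb{T}^{s(n)}))$, computed with local coefficients determined by the bundle $\Pi_{A_n,B_n}$. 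For $k = 1$ this group is $H^{1}(S^1; \pi_{0}(\mathbb{T}^{s(n)}))$, which vanishes because $\mathbb{T}^{s(n)}$ is path connected; for $k \geq 2$ the cohomology $H^{k}(S^1; -)$ vanishes for any coefficient system because $S^1$ has no cells in dimension $\geq 2$. Thus no obstruction survives and a lift exists for every $n$.

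With the lift in hand, Theorem 3.9 produces for each $n$ a unitary $U_n \in I + \mathbb{K}(C(S^1))$ satisfying $\|U_n(t) A(t) U_n(t)^{*} - B(t)\| < 37/n$ uniformly in $t \in S^1$. Given any $\epsilon > 0$, choosing $n$ with $37/n < \epsilon$ and setting $U = U_n$ verifies the definition of approximate unitary equivalence of $A$ and $B$.

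The step I expect to be most delicate is not the obstruction computation itself, which collapses because $S^1$ has cellular dimension $1$, but rather confirming that the obstruction machinery invoked in Section 3.2 of \cite{Park} transfers verbatim to the pullback bundle constructed here, with its local coefficient system built from $\pi_{1}(\mathbb{T}^{s(n)}) \cong \mathbb{Z}^{s(n)}$. Since that machinery only uses the cellular dimension of the base and connectedness of the fiber, both of which hold here, I do not anticipate any new substantive difficulty beyond what \cite{Park} has already handled; the remark in the paragraph immediately before the corollary is designed precisely to make this reduction.
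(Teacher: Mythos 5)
Your argument is correct and is essentially the paper's own: the paper proves the corollary by pulling back $E_{s(n)}$ along $\Phi_{A_n,B_n}^{s(n)}$, noting the fiber $\mathbb{T}^{s(n)}$ is connected and $S^1$ has cellular dimension one so every obstruction group $H^k(S^1;\pi_{k-1}(\mathbb{T}^{s(n)}))$ vanishes, and then applying Theorem 3.9 for each $n$ and letting $n\to\infty$. You have merely made explicit the obstruction-theory bookkeeping that the paper defers to Section 3.2 of the Friedman--Park reference; the only small imprecision is that the local-coefficient bundle $\Pi_{A_n,B_n}$ built from $\pi_1(\mathbb{T}^{s(n)})\cong\mathbb{Z}^{s(n)}$ governs the $H^2$ obstruction (which vanishes by dimension), while the $H^1$ obstruction uses the $\pi_0$-coefficient system, which is trivial since the fiber is connected.
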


For $A,B$ be two normal multiplicity-free elements in $\mathbb{K}(C(S^1))$ with $Eig(A(t))=Eig(B(t))\subset \mathbb{C}\setminus \{0\}$, $\forall t\in S^1$, since the condition (1) ensures the approximately unitary equivalence, next question is what happen without condition (1)? Firstly, we need a lemma, which ensures that any nonzero eigenvalue $\lambda_{x_0}\in Eig(\bar{A}(x_0))$ can be extended locally to be a continuous function $\lambda_{x_0}(x)\in C[x_0-a_{x_0},x_0+a_{x_0}]$ s.t. $\lambda_{x_0}(x)\in Eig(\bar{A}(x))$, $\forall x\in[x_0-a_{x_0},x_0+a_{x_0}]$ and $\lambda_{x_0}(x_0)=\lambda_{x_0}$, for some $a_{x_0}>0$, in which $\bar{A}(x)=A(e^{2\pi xi})$ for $x\in [0,1]$. \

For any fixed ${x_0}\in[0,1]$, $\lambda_i(x_0)\in Eig(\bar{A}(x_0))\subset\mathbb{C}\setminus \{0\}$, since $|\lambda_i(x_0)|>0$ and zero is the only accumulating point of $Eig(\bar{A}(x_0))$, there exists $\delta_i(x_0)>0$ s.t.
$$B(\lambda_i({x_0}),\delta_i({x_0}))\bigcap B(\lambda_j({x_0}),\delta_j({x_0}))=\emptyset~\forall i\neq j\in\mathbb{N}$$
$$B(\lambda_i({x_0}),\delta_i({x_0}))\bigcap B(0,\delta_i({x_0}))=\emptyset~\forall i\in\mathbb{N}$$

\begin{lemma}
For any fixed $x_0\in [0,1]$ and any $\bar{A}(x)$, $\lambda_i(x_0)$ and $\delta_i(x_0)$ in Lemma 3.6, $\forall 0<\beta<\delta_i(x_0)$, $\exists \alpha_{i,\beta}(x_0)>0$, s.t.
\[\sharp(B(\lambda_i(x_0),\frac{\beta}{4})\bigcap Eig(\bar{A}(x)))=1,~~x\in(x_0-\alpha_{i,\beta}(x_0),x_0+\alpha_{i,\beta}(x_0))\]
\end{lemma}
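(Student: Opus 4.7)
The plan is to prove the lemma via Riesz projection techniques essentially parallel to the argument preceding Lemma 3.6 in the excerpt, but now with the contour radius $\beta/4$ rather than $\delta_i(x_0)/2$. First I would fix $\beta\in (0,\delta_i(x_0))$ and consider the positively oriented circle $\Gamma_\beta=\{\lambda:|\lambda-\lambda_i(x_0)|=\beta/4\}$. By the choice of $\delta_i(x_0)$ this circle lies in the resolvent set of $\bar{A}(x_0)$, so the resolvent $(\lambda-\bar{A}(x_0))^{-1}$ attains a finite maximum on $\Gamma_\beta$ and in particular
\[m_{i,\beta}(x_0)\;=\;\inf\{\|(\lambda-\bar{A}(x_0))^{-1}\|^{-1}:\lambda\in\Gamma_\beta\}>0.\]

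Next, using the Neumann series identity
\[(\lambda-\bar{A}(x))^{-1}=(\lambda-\bar{A}(x_0))^{-1}\bigl(1-(\bar{A}(x)-\bar{A}(x_0))(\lambda-\bar{A}(x_0))^{-1}\bigr)^{-1}\]
together with norm-continuity of $\bar{A}$, I would pick $\alpha_{i,\beta}(x_0)>0$ so small that $\|\bar{A}(x)-\bar{A}(x_0)\|<m_{i,\beta}(x_0)$ for $|x-x_0|<\alpha_{i,\beta}(x_0)$, ensuring the Neumann series converges uniformly in $\lambda\in\Gamma_\beta$. This makes the Riesz projection
\[P^{x_0}_{i,\beta}(\bar{A}(x))=\frac{1}{2\pi i}\int_{\Gamma_\beta}(\lambda-\bar{A}(x))^{-1}\,d\lambda\]
well defined and norm-continuous in $x$ on this interval.

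At $x=x_0$ the multiplicity-free hypothesis and the separation provided by $\delta_i(x_0)$ force $P^{x_0}_{i,\beta}(\bar{A}(x_0))$ to be the rank-one eigenprojection onto the $\lambda_i(x_0)$-eigenline. Shrinking $\alpha_{i,\beta}(x_0)$ further if necessary so that $\|P^{x_0}_{i,\beta}(\bar{A}(x))-P^{x_0}_{i,\beta}(\bar{A}(x_0))\|<1$ for $|x-x_0|<\alpha_{i,\beta}(x_0)$, the standard fact that two projections with norm distance less than $1$ have the same rank (Lemma 2.5 in the excerpt) shows $P^{x_0}_{i,\beta}(\bar{A}(x))$ remains of rank one.

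Finally, since $\bar{A}(x)$ is normal with $B(\lambda_i(x_0),\beta/4)\setminus Eig(\bar{A}(x))$ contained in the resolvent set, the spectral decomposition of $\bar{A}(x)$ implies that the rank of $P^{x_0}_{i,\beta}(\bar{A}(x))$ equals the total multiplicity of eigenvalues of $\bar{A}(x)$ inside $B(\lambda_i(x_0),\beta/4)$, and by multiplicity-freeness this equals $\sharp(B(\lambda_i(x_0),\beta/4)\cap Eig(\bar{A}(x)))$. Thus this cardinality is $1$, as required. The main subtlety is verifying that the Riesz projection actually counts the cardinality (not just total multiplicity) of eigenvalues in the disk; this is where the multiplicity-free assumption is used, together with the observation that $\bar{A}(x)$ is a normal compact operator so its spectrum consists of isolated eigenvalues accumulating only at $0$, and $\beta/4<\delta_i(x_0)$ keeps the disk bounded away from $0$.
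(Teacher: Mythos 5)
Your proof is correct and follows essentially the same strategy as the paper's: show the resolvent persists near the boundary of a small disk around $\lambda_i(x_0)$ for $x$ near $x_0$, extract a norm-continuous path of rank-one projections, and use multiplicity-freeness to translate rank into eigenvalue count. The only cosmetic difference is that you realize the spectral projection as a Riesz contour integral over $\Gamma_\beta$ and invoke the same-rank lemma for nearby projections, whereas the paper shows the whole closed annulus $\overline{B(\lambda_i(x_0),\beta/2)}\setminus B(\lambda_i(x_0),\beta/4)$ misses $Eig(\bar{A}(x))$ and then applies continuous functional calculus with a bump function $f$; these produce the identical projection and the argument structure is otherwise the same.
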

\begin{proof}
Because $\overline{B(\lambda_i(x_0),\frac{\beta}{2})}\setminus B(\lambda_i(x_0),\frac{\beta}{4})$ is a closed subset and $(\lambda-\bar{A}(x_0))^{-1}$ exists  $\forall \lambda\in\overline{B(\lambda_i(x_0),\frac{\beta}{2})}\setminus B(\lambda_i(x_0),\frac{\beta}{4})$, we have
\[
\inf\{||\lambda-\bar{A}(x_0)^{-1}||:\lambda\in\overline{B(\lambda_i(x_0),\frac{\beta}{2})}\setminus B(\lambda_i(x_0),\frac{\beta}{4})\}=r_{i,\beta}(x_0)>0
\]
So
\begin{equation*}\
 (\lambda-A(x))^{-1}=(\lambda-A(x_0))^{-1}(1-(A(x)-A(x_0))(\lambda-A(x_0))^{-1})^{-1}
\end{equation*}
exists for $||\bar{A}(x)-\bar{A}(x_0)||<r_{i,\beta}(x_0)$ , $\forall \lambda\in\overline{B(\lambda_i(x_0),\frac{\beta}{2})}\setminus B(\lambda_i(x_0),\frac{\beta}{4})$.\

Since $\bar{A}(x)$ is continuous, $(\lambda-A(x))^{-1}$ exists for $x\in(x_0-\alpha_{i,\beta}(x_0),x_0+\alpha_{i,\beta}(x_0))$ for some $\alpha_{i,\beta}(x_0)>0$ and $\forall \lambda\in\overline{B(\lambda_i(x_0),\frac{\beta}{2})}\setminus B(\lambda_i(x_0),\frac{\beta}{4})$. Therefore
\begin{equation}
(\overline{B(\lambda_i(x_0),\frac{\beta}{2})}\setminus B(\lambda_i(x_0),\frac{\beta}{4}))\bigcap Eig(\bar{A}(x))=\emptyset,\forall x\in(x_0-\alpha_{i,\beta}(x_0),x_0+\alpha_{i,\beta}(x_0))
\end{equation}
Let $f$ be a continuous function defined on $\mathbb{C}$ satisfying the condition
\[
1.f|_{\overline{B(\lambda_i(x_0),\frac{\beta}{4})}}=1,~~~~~2.f|_{B(\lambda_i(x_0),\frac{\beta}{2})^c}=0,~~~~~3.0\leq f\leq 1
\]

So $f(\bar{A}(x))$ is a continuous path of projections for $x\in(x_0-\alpha_{i,\beta}(x_0),x_0+\alpha_{i,\beta}(x_0))$ and $f(\bar{A}(x_0))$ is a rank one projection. Therefore $f(\bar{A}(x))$ is a rank one projection, $\forall x\in (x_0-\alpha_{i,\beta}(x_0),x_0+\alpha_{i,\beta}(x_0))$ i.e.
\begin{multline*}
  \exists \alpha_{i,\beta}(x_0)>0\text{, s.t. }\\
  \sharp(B(\lambda_i(x_0),\frac{\beta}{4})\bigcap Eig(\bar{A}(x)))=1,\forall x\in(x_0-\alpha_{i,\beta}(x_0)),x_0+\alpha_{i,\beta}(x_0)))
\end{multline*}

\end{proof}

So for a normal multiplicity-free $\bar{A}(x)\in \mathbb{K}(C[0,1])$ with nonzero eigenvalues at each $x$ but without condition (1), the lemma above ensures the locally continuous extension of the nonzero eigenvalues. Then, for $\bar{A}\in \mathbb{K}(C[0,1])$ without condition (1), we can assume that $\exists \lambda_0\in Eig(\bar{A}(0))$ s.t. its locally continuous extension $\lambda_0(x)$ satisfies the following condition
$$\lim_{t\to \alpha}\lambda_0(t)\text{ does not exist or }\lim_{t\to \alpha}\lambda_0(t)=0$$
in which
$$\alpha=\sup\{\gamma:\exists\lambda_0(t)\in C[0,\gamma]~s.t.\lambda_0(0)=\lambda_0\text{ and }\lambda_0(t)\in Eig(\bar{A}(t)), \forall t\in [0,\gamma]\}\leq 1$$

Since $||\bar{A}||<\infty$, $\lim_{t\to\alpha}\lambda_0(t)$ can not be infinite. So if $\lim_{t\to \alpha^-}\lambda_0(t)$ does not exist and we can assume that there exists an increasing sequence $\{x_k\}_k(x_k\to \alpha^-\text{ as }k\to\infty)$ s.t.
\[
\lim_{n\to\infty}\lambda_0(x_{2n})=a\neq b=\lim_{n\to\infty}\lambda_0(x_{2n+1})
\]

WOLG, we can assume $a\neq 0$. If $a\notin Eig(\bar{A}(\alpha))$, we can find $(\frac{|a|}{2}>)\epsilon>0$ s.t.
\[
B(a,\epsilon)\bigcap (N_{\epsilon}(Eig(\bar{A}(\alpha)))\bigcup B(0,\epsilon))=\emptyset
\]
in which $N_{\epsilon}(E)=\{x:d(x,E)<\epsilon\}$, then
$$Eig(\bar{A}(x_{2n}))\ni\lambda_0(x_{2n})\notin N_{\epsilon}(Eig(\bar{A}(\alpha)))\bigcup B(0,\epsilon)$$
for $n$ big enough, which is a contradiction with Lemma 3 in page 80 of \cite{Baumgartel}. So we have $a\in Eig(\bar{A}(\alpha))$ and by Lemma 3 in page 80 of \cite{Baumgartel}, \

$\exists(\min\{\frac{|a|}{2},\frac{|a-b|}{3}\}>)\delta>0$ and $\eta_{\delta}>0$ s.t.
\[
(\overline{B(a,\delta)}\setminus B(a,\frac{\delta}{2}))\bigcap Eig(\bar{A}(x))=\emptyset,~~\forall x\in[\alpha-\eta_{\delta},\alpha]
\]
which is a contradiction with the existence of a path of value in $Eig(\bar{A}(x))$ from $\lambda_0(x_{2n})$ to $\lambda_0(x_{2n+1})$ for $n$ big enough.\

\begin{proposition}
For a normal multiplicity-free $\bar{A}(x)\in \mathbb{K}(C[0,1])$ with condition $Eig(\bar{A}(x))\subset \mathbb{C}\setminus \{0\}$, which does not satisfy condition (1), we can assume $\exists \lambda_0\in Eig(\bar{A}(0))$ s.t. its corresponding locally continuous extension $\lambda_0(x)\in C[0,\alpha)$ satisfies the condition $\lim_{t\to \alpha}\lambda_0(t)=0$.
\end{proposition}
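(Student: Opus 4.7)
The plan is to assemble the observations in the paragraph preceding the statement into a self-contained argument with three clean steps: (i) a WLOG reduction to $x_0=0$; (ii) existence of $c:=\lim_{t\to\alpha^-}\lambda_0(t)$; and (iii) the identity $c=0$.

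For (i), because $\bar A$ fails condition (1) there exist some $x_0\in[0,1]$ and some $\lambda_{x_0}\in Eig(\bar A(x_0))$ whose maximal continuous local extension does not cover both endpoints of $[0,1]$. Reflecting $[0,1]$ about its midpoint if needed so that the obstruction lies to the right of $x_0$, and then translating, I may take $x_0=0$ and set $\lambda_0=\lambda_{x_0}$. Define
\[
\alpha=\sup\{\gamma\in(0,1]:\lambda_0\text{ extends continuously on }[0,\gamma]\text{ inside }Eig(\bar A)\};
\]
Lemma 3.12 makes $\alpha>0$, and the failure of condition (1) ensures $\alpha\leq 1$ with no continuous extension onto $[0,\alpha]$, yielding the function $\lambda_0(\cdot)\in C[0,\alpha)$ appearing in the statement.

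For (ii), the bound $|\lambda_0(t)|\leq \|\bar A\|$ prevents $|\lambda_0(t)|\to\infty$. If the limit fails to exist, compactness produces two distinct subsequential limits $a\neq b$ along some $x_k\nearrow \alpha$; at least one, say $a$, is nonzero. The continuity-of-spectrum lemma (Lemma 3 of \cite{Baumgartel}) places $a\in Eig(\bar A(\alpha))$, and its quantitative form together with Lemma 3.12 furnishes $\delta<|a-b|/3$ and $\eta_\delta>0$ with
\[
\big(\overline{B(a,\delta)}\setminus B(a,\delta/2)\big)\cap Eig(\bar A(x))=\emptyset,\qquad x\in[\alpha-\eta_\delta,\alpha].
\]
Continuity of $\lambda_0$ on $[0,\alpha)$ however forces the curve $\lambda_0$, which lies in $B(a,\delta/2)$ at $x_{2n}$ and in $B(b,\delta/2)$ at $x_{2n+1}$ for $n$ large, to traverse that forbidden annulus, contradicting the display.

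For (iii), suppose $c\neq 0$. The same Baumgartel result places $c\in Eig(\bar A(\alpha))$, and since $c\neq 0$ Lemma 3.12 provides $\alpha_c(\alpha)>0$ together with a continuous eigenvalue selection $\mu$ on $(\alpha-\alpha_c(\alpha),\alpha+\alpha_c(\alpha))$ with $\mu(\alpha)=c$; uniqueness of the eigenvalue inside a sufficiently small disc about $c$ forces $\mu=\lambda_0$ on the overlap with $[0,\alpha)$, so concatenation produces a continuous extension on $[0,\alpha+\alpha_c(\alpha))$, contradicting the supremum definition of $\alpha$. Hence $c=0$. The main technical obstacle is the oscillation argument in step (ii): one must combine continuity of $\lambda_0$ on $[0,\alpha)$ with the Baumgartel annulus estimate, picking $\delta$ small compared to $|a-b|$ yet large enough that the tail of the oscillating sequence is trapped inside $B(a,\delta/2)\cup B(b,\delta/2)$, so that the intermediate values of $\lambda_0$ are forced to cross the forbidden annulus.
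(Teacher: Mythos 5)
Your argument is correct and follows essentially the same route as the paper: define $\alpha$, rule out a divergent oscillation via the Baumg\"artel annulus estimate, and observe that a nonzero limit would permit a local continuation past $\alpha$ (Lemma~3.12), forcing the limit to be $0$. You fill in one detail the paper leaves implicit --- why a nonzero existing limit is impossible --- but the substance and the key lemma invocations match the paper's.
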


\begin{theorem}
Let $A,B$ be two normal multiplicity-free elements in $\mathbb{K}(C(S^1))$ with $Eig(A(x))=Eig(B(x))\subset \mathbb{C}\setminus \{0\}$, $\forall x\in S^1$, if $A$ satisfied the condition (1) so does $B$. Similarly for two normal multiplicity-free elements $\bar{A},\bar{B}$ in $\mathbb{K}(C[0,1])$ with $Eig(\bar{A}(x))=Eig(\bar{B}(x))\subset \mathbb{C}\setminus \{0\}$, $\forall x\in [0,1]$, if $\bar{A}$ satisfied the condition (1), so does $\bar{B}$.
\end{theorem}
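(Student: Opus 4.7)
The plan is to argue by contradiction, using Proposition 3.11 as the main structural input. Suppose $\bar{A}$ satisfies condition (1) while $\bar{B}$ does not. Applying Proposition 3.11 to $\bar{B}$, I would obtain some $\mu_0 \in Eig(\bar{B}(0))$ whose maximal locally continuous extension $\mu_0(t) \in C[0,\alpha)$, for some $\alpha \leq 1$, satisfies $\lim_{t\to\alpha^-} \mu_0(t) = 0$. The hypothesis $Eig(\bar{A}(t)) = Eig(\bar{B}(t))$ gives in particular $\mu_0 \in Eig(\bar{A}(0))$, so condition (1) for $\bar{A}$ produces a globally continuous $\lambda \in C[0,1]$ with $\lambda(0) = \mu_0$ and $\lambda(t) \in Eig(\bar{A}(t)) = Eig(\bar{B}(t))$ for every $t \in [0,1]$.

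The key step, and the principal obstacle, is to show $\lambda \equiv \mu_0$ on the whole interval $[0,\alpha)$. I would handle this by a standard connectedness/supremum argument: set
\[
\beta = \sup\{\gamma \in [0,\alpha) : \lambda|_{[0,\gamma]} = \mu_0|_{[0,\gamma]}\}
\]
and suppose for contradiction that $\beta < \alpha$. By continuity of both functions, $\lambda(\beta) = \mu_0(\beta) \in Eig(\bar{B}(\beta)) \subset \mathbb{C}\setminus\{0\}$. Applying Lemma 3.10 at the point $x_0 = \beta$ with this common eigenvalue, I extract $\delta > 0$ and $\alpha_\delta > 0$ such that $B(\lambda(\beta), \delta/4) \cap Eig(\bar{B}(x))$ consists of exactly one point whenever $x \in (\beta - \alpha_\delta, \beta + \alpha_\delta)$. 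Continuity of $\lambda$ and $\mu_0$ at $\beta$ then forces both of them to take values inside $B(\lambda(\beta), \delta/4)$ on some neighborhood of $\beta$, so they must agree with the unique eigenvalue there; this contradicts the definition of $\beta$. (A small point to be careful with: past $\beta$, the extension $\mu_0$ is still defined and continuous on a one-sided neighborhood because $\beta < \alpha$, so the argument applies symmetrically on both sides.)

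Having established $\lambda \equiv \mu_0$ on $[0,\alpha)$, letting $t \to \alpha^-$ gives
\[
\lambda(\alpha) \;=\; \lim_{t \to \alpha^-} \lambda(t) \;=\; \lim_{t \to \alpha^-} \mu_0(t) \;=\; 0,
\]
contradicting $\lambda(\alpha) \in Eig(\bar{B}(\alpha)) \subset \mathbb{C}\setminus\{0\}$. Hence $\bar{B}$ must satisfy condition (1). The $S^1$ statement then reduces immediately to the $[0,1]$ statement by passing to the pullbacks $\bar{A}(x) = A(e^{2\pi xi})$ and $\bar{B}(x) = B(e^{2\pi xi})$, since by Definition 3.3 condition (1) for operators over $S^1$ is defined through these pullbacks, and the hypothesis on eigenvalue sets transfers verbatim.
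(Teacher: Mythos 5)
Your proof is correct and takes essentially the same route as the paper's: both rely on Proposition~3.14 (your ``Proposition 3.11'') to produce a locally defined eigenvalue branch $\mu_0$ on $[0,\alpha)$ with $\mu_0(t)\to 0$, invoke condition~(1) for $\bar{A}$ to obtain a globally defined $\lambda\in C[0,1]$, and then run a supremum argument combined with Lemma~3.13 (your ``Lemma 3.10'') on local uniqueness of eigenvalue branches. The only structural difference is cosmetic: the paper derives its contradiction directly at the supremum point $b<\alpha$ by exhibiting two distinct eigenvalues in a small ball around $\mu_0(b)$, whereas you first bootstrap agreement $\lambda\equiv\mu_0$ all the way up to $\alpha$ and then obtain the contradiction $\lambda(\alpha)=0\notin Eig(\bar{B}(\alpha))$; both contradictions come from the same local uniqueness lemma.
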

\begin{proof}
We only prove for $\bar{A}$ and $\bar{B}$. If not, we have two normal multiplicity-free elements $\bar{A},\bar{B}$ in $\mathbb{K}(C[0,1])$ with $Eig(\bar{A}(x))=Eig(\bar{B}(x))\subset \mathbb{C}\setminus \{0\}$, $\forall x\in [0,1]$ and $\bar{A}$ satisfies the condition (1) while $\bar{B}$ does not. We can assume that $\exists\alpha\in(0,1], \exists\lambda_0\in Eig(\bar{B}(0))$ s.t. its corresponding locally continuous extension $\lambda_0(x)\in C[0,\alpha)$ satisfies the condition $\lim_{x\to \alpha}\lambda_0(x)=0$.\

Since $Eig(\bar{A}(x))=Eig(\bar{B}(x))$, there exists $\mu_0\in C[0,1]$ s.t. $\mu_0(0)=\lambda_0$ and $\mu_0(x)\in Eig(\bar{A}(x))$, $\forall x\in[0,1]$. Because $\mu_0(\alpha)\neq 0$, then $b=\sup\{a\in[0,1]:\mu_0(a)=\lambda_0(a)\}<\alpha$ and by the continuity of $\lambda_0(x)$ and $\mu_0(x)$, $\mu_0(b)=\lambda_0(b)>0$. Since $Eig(\bar{A}(x))=Eig(\bar{B}(x))$, $\forall x\in[0,1]$, so $\forall \epsilon>0$, $\exists $ a decreasing sequence $\{x_k\}_k$ with $x_k>b$ and $\lim_{k\to\infty}x_k=b$ s.t. for $k$ big enough, we have $\mu_0(x_k)\neq\lambda_0(x_k)$ and
\[
\mu_0(x_k),\lambda_0(x_k)\in B(\mu_0(b),\epsilon)\bigcap Eig(\bar{A}(x_k))=B(\mu_0(b),\epsilon)\bigcap Eig(\bar{B}(x_k))
\]
which is a contradiction with Lemma 3.13.
\end{proof}

\begin{corollary}
Let $A,B$ be two normal multiplicity-free elements in $\mathbb{K}(C(S^1))$ with $Eig(A(x))=Eig(B(x))\subset \mathbb{C}\setminus \{0\}$, $\forall x\in S^1$, if $A$ satisfies the condition (1), then $A$ and $B$ are approximately unitarily equivalent.
\end{corollary}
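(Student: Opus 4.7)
The plan is to obtain this corollary essentially as a direct combination of the two main results just established in Section 3.2, namely Corollary 3.10 and Theorem 3.14. Corollary 3.10 gives approximate unitary equivalence under the hypothesis that \emph{both} $A$ and $B$ satisfy condition (1), while Theorem 3.14 is precisely the propagation statement that removes the asymmetry in the hypothesis: if one of the two operators satisfies condition (1), then so does the other (under the shared assumption that $\mathrm{Eig}(A(x))=\mathrm{Eig}(B(x))\subset\mathbb{C}\setminus\{0\}$ for all $x\in S^1$).

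First I would apply Theorem 3.14 to the pair $(A,B)$. Since $A,B\in\mathbb{K}(C(S^1))$ are normal multiplicity-free, share the same eigenvalue set at each point of $S^1$, that common set avoids $0$, and $A$ is assumed to satisfy condition (1), the conclusion of Theorem 3.14 yields that $B$ also satisfies condition (1). At this point the hypotheses of Corollary 3.10 are met: both $A$ and $B$ are normal multiplicity-free elements of $\mathbb{K}(C(S^1))$ with matching nonzero eigenvalues, and each satisfies condition (1).

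I would then invoke Corollary 3.10 directly to conclude that $A$ and $B$ are approximately unitarily equivalent, which is exactly the assertion of Corollary 3.15. No further construction or estimate is needed, since the quantitative work (building the finite-rank truncations $A_n,B_n$, associating the map $\Phi_{A_n,B_n}^{s(n)}\colon S^1\to C_{s(n)}$, lifting it to $E_{s(n)}$ via the pullback bundle argument, and assembling the approximating unitary from the isometries produced by the lift) has already been carried out in the proof of Theorem 3.9 and its corollary. The only real obstacle is therefore not in this corollary itself but upstream, in the proof of Theorem 3.14, which carries the genuine analytic content (using Lemma 3.13 on local continuous extension of nonzero eigenvalues, together with the dichotomy described in Proposition 3.12, to rule out that $B$ could fail condition (1) while $A$ satisfies it). Once Theorem 3.14 is in hand, Corollary 3.15 is essentially a one-line consequence.
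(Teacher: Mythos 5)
Your proposal is correct and takes exactly the approach the paper intends (the paper gives no explicit proof for this corollary, as it is an immediate consequence of the two preceding results). Note only a minor numbering slip: with the paper's shared theorem counter, the propagation result is Theorem~3.15 (not 3.14) and the two-sided corollary is Corollary~3.12 (not 3.10), but the statements you invoke are the right ones.
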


$\mathbf{Question}$: For two normal multiplicity-free elements $A,B$ in $\mathbb{K}(C(S^1))$ with $Eig(A(x))=Eig(B(x))\subset \mathbb{C}\setminus \{0\}$, $\forall x\in S^1$ without condition (1),  whether they are approximately unitarily equivalent? Or whether there exists a pair of normal multiplicity-free elements $A,B$ in $\mathbb{K}(C(S^1))$ with $Eig(A(x))=Eig(B(x))\subset \mathbb{C}\setminus \{0\}$, which neither satisfy condition (1) while $A$ and $B$ are approximately unitarily equivalent?\\

{\bf Acknowledgments} The author wishes to thank the reviewers for careful
reading and valuable comments. This work was supported by NSFC grant of P.R. China (No.11326104). And the authors are
grateful to Moscow State University where part of this paper has been written.

\bibliographystyle{amsplain}

\end{document}